\newtheorem{Teo}{Theorem}[section]
\newtheorem{Def}{Definition}[section]
\newtheorem{Lema}{Lemma}[section]
\newtheorem{Prop}{Proposition}[section]
\newtheorem{Cor}{Corollary}[section]
\newtheorem{Remark}{Remark}[section]
\newtheorem{maintheorem}{Theorem}
\author{I. Rios }
\address{Instituto de Matem\'atica e Estat\'\i stica, Universidade Federal Fluminense}
\email{rios@mat.uff.br}
\author{J. Siqueira}
\address{Centro de Matem\'atica da Universidade do Porto}
\email{jaqueline.rocha@fc.up.pt}
\title{On equilibrium states for partially hyperbolic horsehoes}
\begin{document}

\begin{abstract}

We prove existence and uniqueness of equilibrium states for a family of partially hyperbolic 
systems, with respect to  H\"older continuous potentials with small variation. The family 
comes from the projection,  on the center-unstable direction, of a family of partially hyperbolic horseshoes   
introduced in [Díaz,L., Horita,V., Rios, I., Sambarino, M., {\em Destroying horseshoes via heterodimensional cycles: generating bifurcations inside homoclinic classes}, Ergodic Theory and Dynamical Systems, 2009]. For the original three dimensional system, we consider potentials with small variation, constant on local stable manifolds, obtaining existence and uniqueness of equilibrium states.

\end{abstract}

\maketitle

\pagenumbering{arabic}

%%%%%%%%%%%%%%%%%%%%%%%%%%%%%%%%%%%%%%%%%%%%%%%%%%%%%%

%%%%%%%%%%%%%%%%%%%%%%%%%

%   Introduction                                                            %

%%%%%%%%%%%%%%%%%%%%%%%%%%%%%%%%%%%%%%%%%%%%%%%%%%%%%%

%%%%%%%%%%%%%%%%%%%%%%%%%

%\bibliographystyle{alpha}

%\bibliography{mybib7}

\section{Introduction}

\label{introducao}

\addcontentsline{toc}{section}{Introduction}

The Thermodynamic Formalism, inspired by Statistical Mechanics, is a part of Ergodic Theory  that studies the behavior of invariant measures with regular Jacobians called equilibrium states.
We formally define equilibrium states with respect to a potential as follows.

\begin{Def}\label{est eq}
Consider a continuous map $G: \Lambda \to \Lambda$ on a compact metric space $\Lambda$. We say 
that a $G$-invariant probability measure $\mu$ is an \em{equilibrium state} for $G$ w.r.t. a 
potential $\phi: \Lambda \to \mathbb{R}$ if it satisfies
          $$h_{\mu} (G) + \int \phi \, d\mu  = \sup_\eta \left\{ h_{\eta} (G) + \int \phi \, d\eta 
\right\},  $$
where the supremum is taken over all $G$-invariant probability measures.
\end{Def}

The theory was introduced by Sinai in the 70s, and since then it has been extensively studied 
by many authors who have made great advances in the area. In \cite {Sinai}, the author 
initiated the study of equilibrium states for Anosov diffeomorphisms which would be 
supplemented by Bowen and Ruelle. See \cite {Ruelle68}, \cite {Ruelle78} and \cite{Bowen71}.
They considered a system that consists of an uniformly expanding map and a H\"older 
continuous potential. For that system they proved that it has an unique equilibrium state. Sinai, 
Ruelle and Bowen's strategy was to use a finite Markov partition to build a conjugacy with a 
Markov shift of finite type. With some adjustments the proof extends to the case of uniformly 
hyperbolic maps.

%In the context of flows Franco has extend the tecnique of Bowen to prove that there exists an 
%unique equilibrium states associates to a Expansive flow with specification and continuous 
%potentials with certain regularity, see \cite{Franco}. In 2015, Climenhaga and Thompson 
%\cite{CT} proved that the conclusion remains true under the assumption of a weaker version of 
%specification and expansivity.

In the context of non-uniformly hyperbolic (in dimension bigger than one), we mention the important contributions made by 
Sarig (see \cite {Sarig99} and \cite {Sarig03}), Buzzi and Sarig \cite{Buzzi Sarig}, who studied countable Markov shifts, Buzzi, Paccaut and Schmitt \cite{Buzzi},  who studied piecewise expanding maps, Oliveira and Viana \cite{OV08}, and Varandas and Viana \cite{Varandas}, who considered non-uniformly expanding maps, among others.

More specifically, in \cite{OV08},  the authors proved existence and uniqueness of 
equilibrium states for a family of non-uniformly expanding maps associated with a class of 
H\"older continuous potentials  with small variation. In their work, they considered  finite and 
not generating Markov partitions. Some rectangles are uniformly expanding and others could 
have some contractive behavior. They used the notion of {\it hyperbolic times} introduced by 
Alves, Bonatti and Viana in \cite{ABV} to construct special Gibbs-type measures.

Leplaideur, Oliveira and Rios \cite {LOR}  studied the existence and properties of equilibrium states, for a family of partially hyperbolic 
horseshoes introduced by Diaz {\it et alii} in \cite{ diaz et al}. The three dimensional 
horseshoes were proved to be simultaneously at the boundary of the set of uniformly 
hyperbolic systems and the set of persistently non hyperbolic ones (see \cite{diaz et al}). In 
\cite {LOR}, the authors studied the ergodic measures with support in the limit set, showing 
that there is a spectral gap on the set of central Lyapunov exponents. As a consequence, there 
is a phase transition for the potential $t .log{|J^c|}$, where $J^c$ is the Jacobian of the system 
restricted to the (one dimensional) central direction and $t\geq 0$. In fact, for a special value $t_0$, there 
are two mutually singular equilibrium states.  For $t>t_0$, the equilibrium state is unique, and 
it is a Dirac measure. They proved the existence of equilibrium states for every $t$ (for every 
continuous potential, in fact), but the question of uniqueness for $0\leq t<t_0$  was left open. 
In \cite {katrin}, the authors explored geometric  and ergodic properties of a class of 
horseshoes with similar dynamical properties.

Still in the context of the partially hyperbolic horseshoes introduced in \cite{diaz et al}, Arbieto and Prudente \cite{AP} proved that, for H\"older continuous 
potentials depending only on the unstable direction (thus constant on the center-stable one), 
the associated equilibrium state is unique.  

Here we consider the same family of three dimensional horseshoes, taking the inverse map. We start by defining a two dimensional abstract map,  obtained by 
projecting the  system on two center-unstable leaves (center-stable leaves for the original map).  We prove existence and uniqueness of 
equilibrium states, associated to  H\"older continuous potentials with small variation, where, by small, we mean smaller than a fixed constant explicitly given. This part of the work is strongly inspired in the work of Oliveira and Viana  \cite {OV08}, but there are many extra difficulties to circumvent in our case: the degree of the map is not constant, and the number of expanding rectangles of the Markov partition is too small. 

Back to the three dimensional problem, we address the question  of uniqueness of equilibrium states for a special class of H\"older continuous potentials, that 
depend only on the center-unstable direction (complementing the direction considered in 
\cite{AP}), and whose variation has the same bound as before. 

This paper is organized as follows. In section~\ref{main theorems},  we introduce the family of horseshoes 
constructed in the paper \cite {diaz et al}  and the projected maps $ G=G_{\lambda,\sigma}$. We also state the main results of our work. The first main 
result  (Theorem \ref{Teorema Principal}) stablishes the class of potentials we deal with, and ensures the 
uniqueness of equilibrium states for the family $G$. Theorem \ref{Teorema B} has  intrinsic interest, and 
will be used for the proof of Theorem \ref{Teorema Principal}. It guarantees the existence of an eigenfunction 
and eigenmeasure for the Ruelle-Perron-Frobenius transfer operator and its dual, respectively, 
both associated with the same eigenvalue that is the spectral radius of the operator. Theorem~\ref{Teo C} gives an answer to the three dimensional problem for a family of potentials varying only on the center-unstable direction.

In section \ref{prova teo B} we prove Theorem \ref{Teorema B}.  We establish the existence of a reference measure 
and prove the abundance of hyperbolic times for almost every point with respect to it. We find 
an eigenfunction for the Transfer Operator and state some of its properties.  In sections \ref{Existence2}, and \ref{Uniqueness} 
we prove the existence and uniqueness of the equilibrium state, which is a non-lacunary Gibbs 
measure.  Finally, in section \ref{back}, we prove Theorem~\ref{Teo C}. We use the equilibrium state for the map $G$ to construct 
an equilibrium state supported on the three dimensional horseshoe, and prove the 
uniqueness.

%%%%%%%%%%%%%%%%%%%%%%%%%%%%%%%%%%%%%%%%%%%%%%%%%%%%%%%%%%%%%%%%%%%%%%%%%%%%%%%%%%%%%%%
%%%%%%%%%%%%%%%%%%%%%%%%%%%  SECTION %%%%%%%%%%%%%%%%%%%%%%%%%%%%%%%%%%%%%%%%%%%%%%%%%%
%%%%%%%%%%%%%%%%%%%%%%%%%%%%%%%%%%%%%%%%%%%%%%%%%%%%%%%%%%%%%%%%%%%%%%%%%%%%%%%%%%%%%%%%
%%%%%%%%%%%%%%%%%%%%%%%%%%%%%%%%%%%%%%%%%%%%%%%%%%%%%%%%%%%%%%%%%%%%%%%%%%%%%%%%%%%%%%%%

\section{Main results}\label{main theorems}

Consider $R= [0,1]\times[0,1]\times[0,1]\subset\mathbb{R}^3$ and let 

$$
\tilde{R}_0 =[0,1]\times [0,1]\times [0,1/6]
\qquad \mbox{and} \qquad
\tilde{R} _1=[0,1]\times [0,1]\times [5/6,1].
$$ 
Define 
   $$ F_{0}(x,y,z):=F_{| \tilde{R}_{0} }(x,y,z) =(\rho x , f(y),\beta z),$$
where $0 < \rho <{1/3}$, $\beta> 6$ and  $f(y) =\frac {1}{1 - (1-{1/y})e^{-1}}. $

\begin{figure}[h!]
\centering
\includegraphics[scale=0.3]{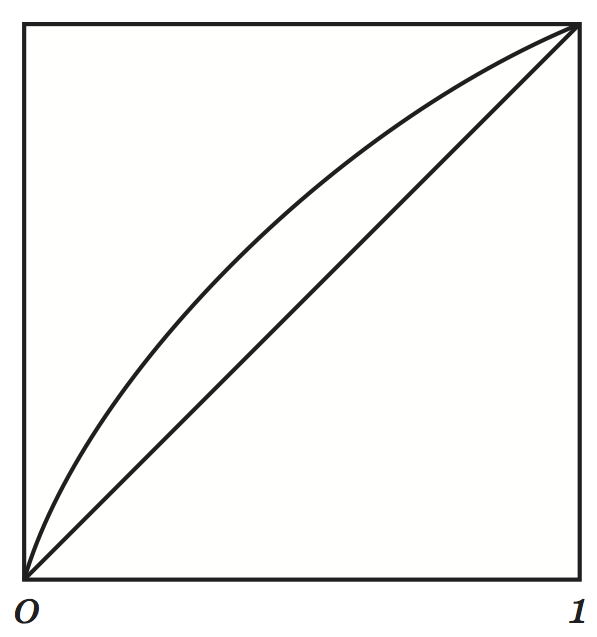}
\caption{Function $f$.}\label{tempoum}
\end{figure}

We also define
$$F_{1}(x,y,z)=F_{| \tilde{R}_{1} }(x,y,z) = \Big(\frac{3}{4}- \rho x , \sigma (1 - y) ,\beta_{1} \Big(z - \frac{5}{6} \Big)\Big),$$
where  $0<\sigma< {1/3}$ e $3< \beta_1 < 4$.

For $X \in R$, we define 
\begin{equation}
\label{Fdeles}
F(X) = \left\{
\begin{array}{lcl}
F_0(X) & if & X \in \tilde R_0\\
F_1(X) & if & X \in \tilde R_1.
\end{array}
\right.
\end{equation}

Points $X \in R$ outside $\tilde{ R}_0 \cup \tilde {R}_1$ will be mapped  injectively outside $R$. Notice  that $F$ depends on the parameters $\rho, \beta, \beta_1$ and $\sigma$.

\begin{figure}[h!]
\centering
\includegraphics[scale=0.5]{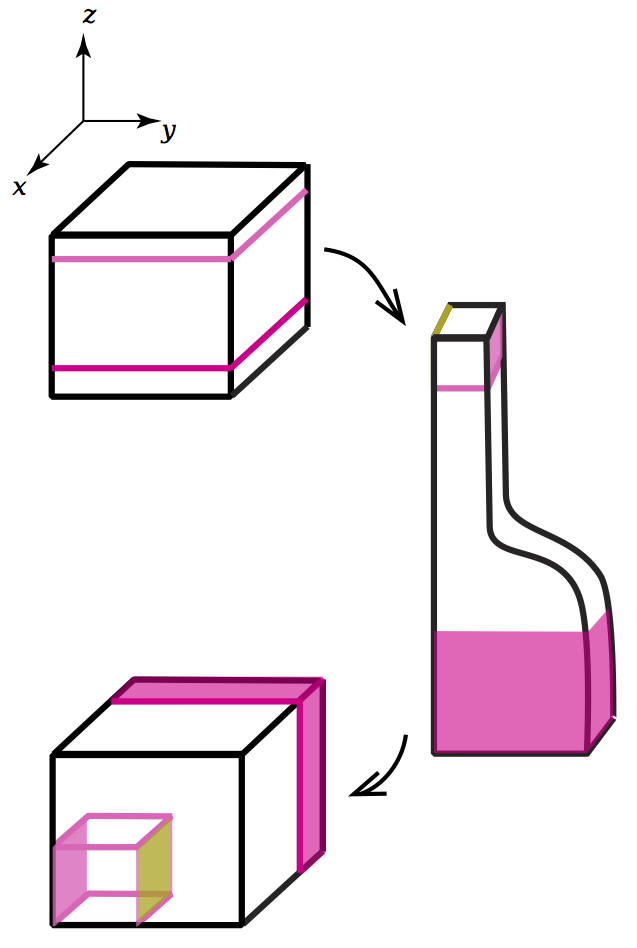}
\caption{The map $F$}\label{ferradura2}
\end{figure}

In \cite{diaz et al}, Díaz et al showed that the non-wandering set for $F$ is partially hyperbolic, with one dimensional central direction, parallel to the $y$-axis. The vertical direction is expanding and the horizontal direction, parallel to the $x$-axis is contracting. The $y$-axis direction has both behaviours.

Now we define the map $G$, which is related to the projection of $F^{-1}$ in two center-stable planes.
We consider an abstract space consisting by  three rectangles  $R_1$, $R_2$ and $R_3$  defined as follows:
\[
\begin{split}
& R_1 = [0,\rho]\times [0,1]\times \{0\}, \\
& R_2 = [3/4 - \rho,3/4]\times [0,\sigma]\times \{0\}, \\
& R_3 = [0,\rho]\times [2,3]\times \{5/6 \}.
\end{split}
\]

\begin{figure}[h!]
\centering
\includegraphics[scale=0.3]{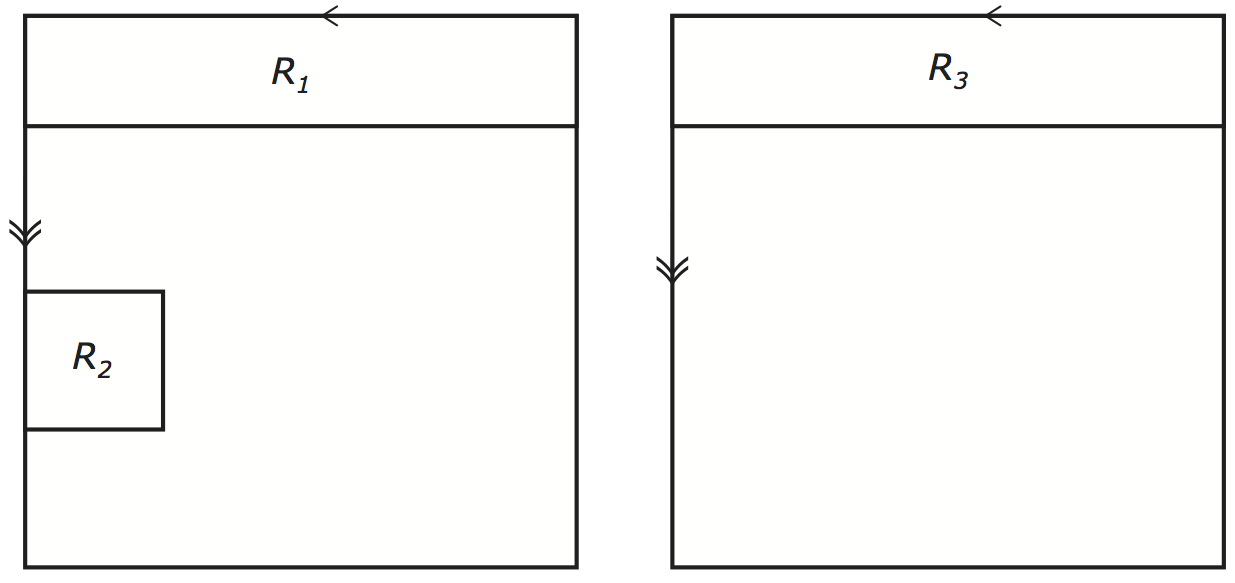}
\caption{Rectangles $R_1$, $R_2$ and $R_3$.}\label{retangulos}
\end{figure}

The rectangles are inside two planes called $P_0$ and $P_1$ as in figure~\ref{retangulos}. 
Let  $g_0: [0,1] \to \mathbb{R}$, $g_0(y)=f^{-1}(y) $ and let $g_1:[0,1] \to \mathbb{R}$ be defined by $g_1(y)= 1- \sigma^{-1}y$. See their graphs in figure~\ref{SIF}.
\begin{figure}[h!]
\centering
\includegraphics[scale=0.3]{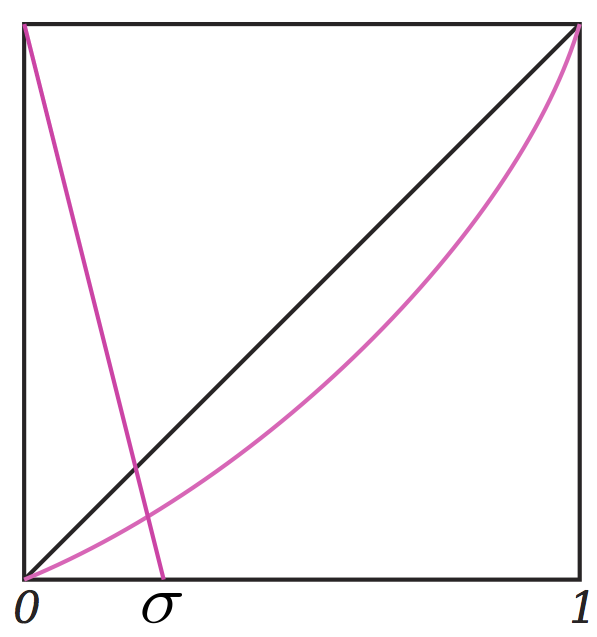}
\caption{Functions $g_0$ and $g_1$.}\label{SIF}
\end{figure}

Let $\alpha = \rho^{-1}$. We define a map $G: \displaystyle\bigcup_{i=1}^{3}{R_{i}} \to  P_{0} \cup  P_{1}$ by its restrictions $G_i$ to each rectangle $R_i$ as follows:
\begin{eqnarray*}
  G_{1}(x,y,z) & = & \big(\alpha x, g_{0}(y),0  \big), \\        
  G_{2}(x,y,z) & = & \big(\alpha (3/4 - x),g_{1}(y) ,5/6  \big), \\ 
  G_{3}(x,y,z) & = & \big(\alpha x,g_{0}(y),0  \big). 
\end{eqnarray*}   

The behaviour of the map $G$ is similar on $R_1$ and $R_3$. In these rectangles, the vertical direction is expanding. In the horizontal direction $G$ sends the points from the right side to the left, except for the extreme points whose $x$ coordinates are fixed. In the rectangle $R_2$ both directions are expanding.

We consider $\Lambda$ the maximal invariant set of the union of the rectangles: 
\begin{equation*}
\label{maxinv}
\Lambda := \displaystyle\bigcap _{n \in \mathbb{N}}G^{-n}  \Big(\displaystyle\bigcup _{i=1}^{3}R_i  \Big),
\end{equation*}

and from now on we call $G$ the restriction $G|_{\Lambda}: \Lambda \rightarrow \Lambda$.

\begin{figure}[h!]
\centering
\includegraphics[scale=0.3]{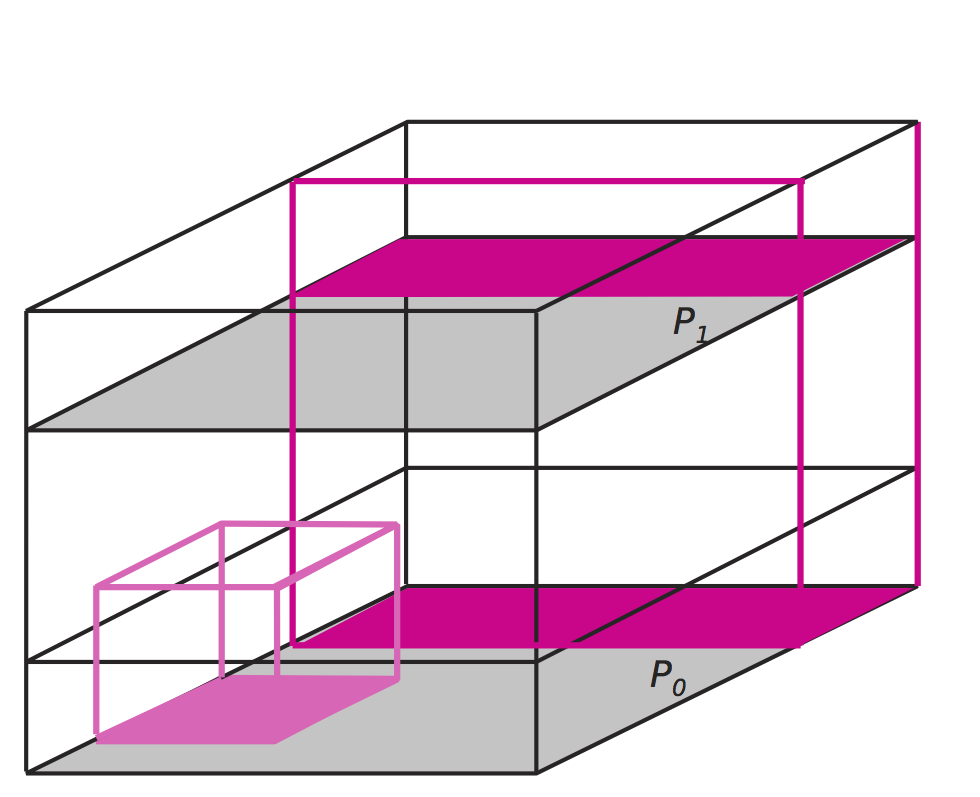}
\caption{Horizontal planes}\label{ferradura2}
\end{figure}

Let $A$ be the matrix: 
$$ A = \left( 
\begin{array}{ccc}
   1 & 1 & 0 \\
   0 & 0 & 1 \\
   1 & 1 & 0 \\      
\end{array}
\right).  $$ 
and consider $\Sigma _{A}$ the finite type subshift with transition matrix $A$:  
$$\Sigma_{A}= \left\{ \Theta =  \left( \theta _{0} \theta _{1} \theta _{2} \cdots \right) \in \left\{1,2,3 \right\} ^ {\mathbb{N}} | A_{\theta_{i}\,\theta_{i+1}} = 1 \right\}. $$

According to this the following transitions are allowed:
\begin{eqnarray*}
   & &   1 \rightarrow 1, 2 \\
   & &   2 \rightarrow 3\\
   & &   3 \rightarrow 1, 2 .\\
\end{eqnarray*} 
The map $G$ is semi-conjugated to the shift $\sigma$ on $\Sigma_A$.  This result is an adaptation of a result in \cite{diaz et al}. [See Lemma 5.2]  

Note that the points belonging to the rectangles 1 and 2 have two pre-images while points in the rectangle 3 have just one pre-image.

\begin{figure}[h] \centering \label{iterados}
\begin{minipage}[b]{0.45\linewidth}
\includegraphics[width=\linewidth]{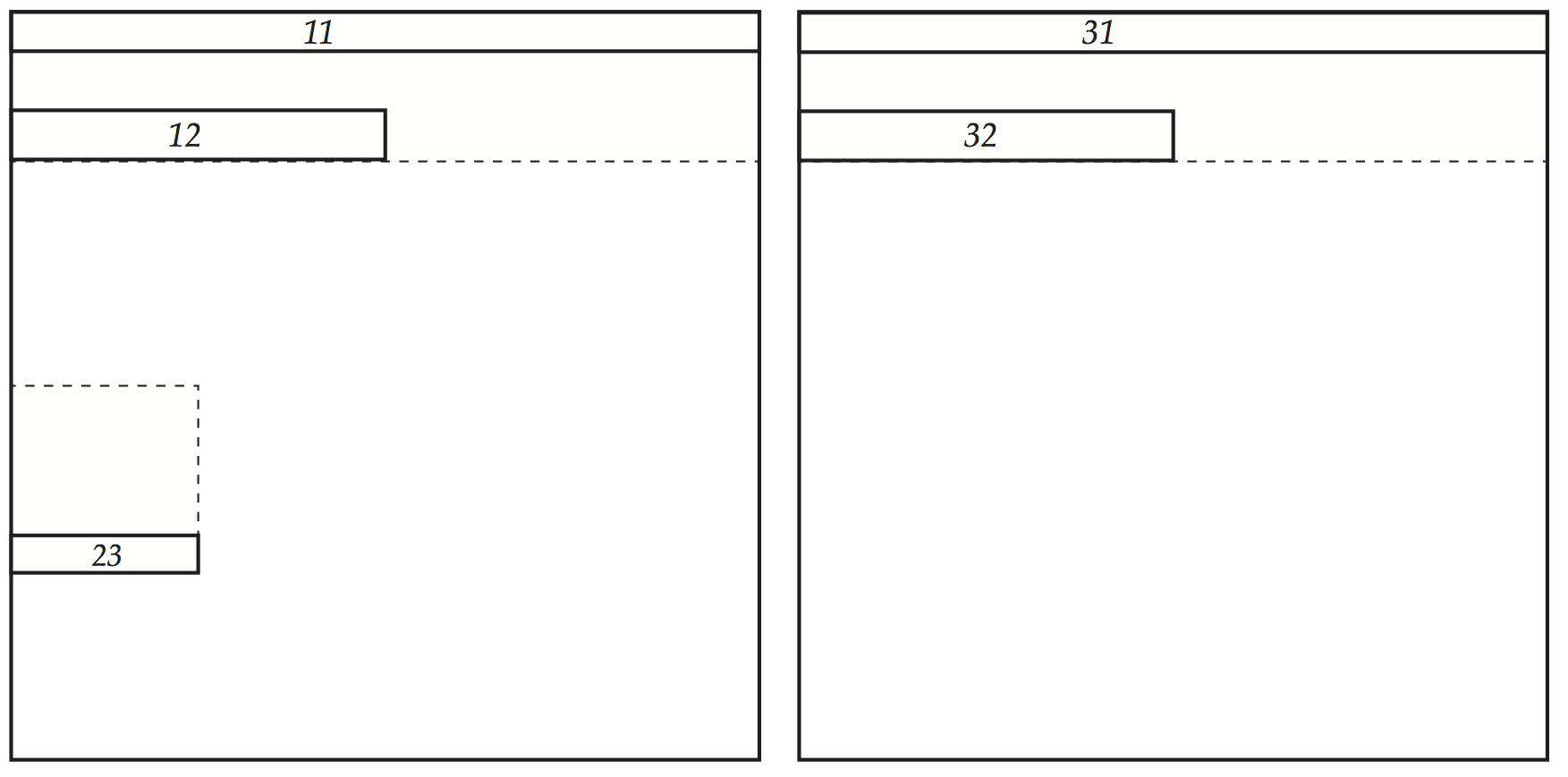} \caption{Second generation}
\end{minipage} 
\hfill
\begin{minipage}[b]{0.45\linewidth} 
\includegraphics[width=\linewidth]{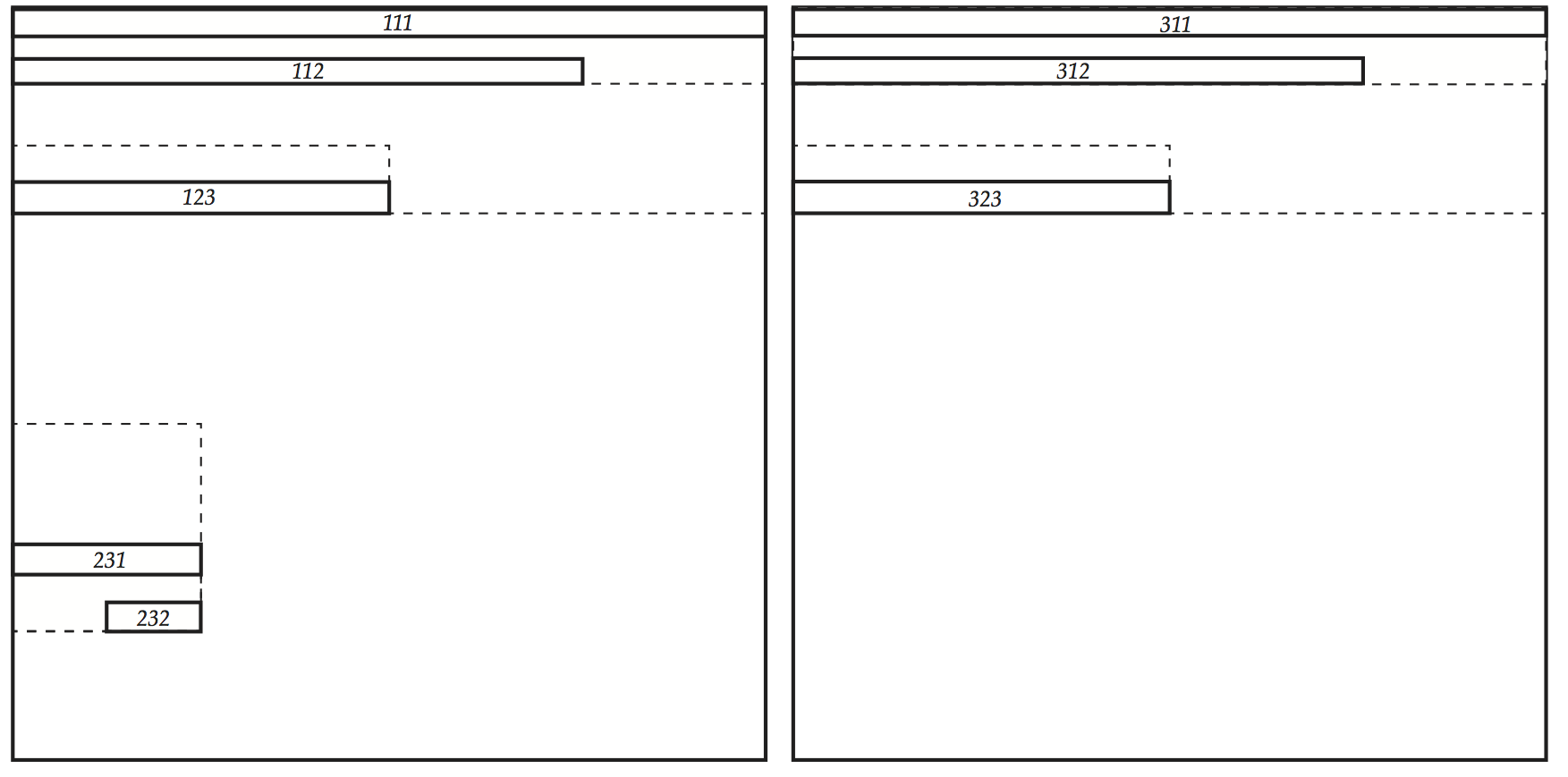} \caption{Third generation} 
\end{minipage}
\end{figure}

In figure~\ref{iterados}, the steps in the generation of the set $\Lambda$ are depicted. Note that $\Lambda$ is not a Cantor set: it contains infinitely many lines segments.
It is worth to evidence that $G$ is not conjugated to the shift $\sigma$. The entire segment $[0,1]$ is associated to a unique sequence on $\Sigma_A$: the constant sequence $(\cdots 1,1,1 \cdots)$.

The topological entropy of the subshift $\sigma$ is given by:
 $$h_{top}(\sigma)=\log \left(\frac{1+\sqrt{5}} {2}\right).$$ 
By the semiconjugacy we have $h_{top}(G) \geq \log \left(\frac{1+\sqrt{5}} {2}\right)$. Let  $\omega=\left(\frac{1+\sqrt{5}} {2}\right)$.
We are ready to formulate the first main result of this paper.

\begin{maintheorem}\label{Teorema Principal}
Let $G \colon \Lambda \to \Lambda$ be as above and $\phi : \Lambda \rightarrow \mathbb{R}$ be a H\"older continuous potential satisfying:
\begin{equation}\label{variacaopequena}
\sup \phi - \inf \phi < \frac{\log \omega}{2}.
\end{equation}
Then there exists a unique equilibrium state for the system $G$ with respect to the potential $\phi$.
\end{maintheorem}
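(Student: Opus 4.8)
\emph{Proof strategy.} The plan is to follow the transfer--operator approach of Oliveira and Viana \cite{OV08}, feeding in the spectral data furnished by Theorem~\ref{Teorema B}. Write $\mathcal{L}_\phi$ for the Ruelle--Perron--Frobenius operator, $(\mathcal{L}_\phi u)(x)=\sum_{G(y)=x}e^{\phi(y)}u(y)$, and let $\lambda>0$, the probability measure $\nu$ with $\mathcal{L}_\phi^{\ast}\nu=\lambda\nu$, and the function $h$ bounded away from $0$ and $\infty$ with $\mathcal{L}_\phi h=\lambda h$ be as in Theorem~\ref{Teorema B}, so that $\log\lambda=P(\phi)$ is the topological pressure. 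First I would set $\mu=h\,\nu$, normalised so that $\int h\,d\nu=1$, and check, by the usual computation with the duality relation $\mathcal{L}_\phi^{\ast}\nu=\lambda\nu$ together with $\mathcal{L}_\phi\big((\psi\circ G)\,h\big)=\psi\,(\mathcal{L}_\phi h)=\lambda\,\psi\,h$, that $\mu$ is $G$-invariant. This $\mu$ is the candidate equilibrium state.

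The second step is to show that $\mu$ is a non--lacunary Gibbs state. By Theorem~\ref{Teorema B} and the material of Section~\ref{prova teo B}, $\nu$-almost every (hence $\mu$-almost every) point $x$ has infinitely many hyperbolic times, occurring with positive frequency and with uniformly bounded gaps. At such a time $n$ the inverse branch of $G^{n}$ along the orbit of $x$ is a contraction with uniformly bounded distortion; combining this with $\mathcal{L}_\phi h=\lambda h$, $\mathcal{L}_\phi^{\ast}\nu=\lambda\nu$ and the small oscillation \eqref{variacaopequena} of $\phi$ yields a constant $K>1$ with
$$K^{-1}\ \le\ \frac{\mu\big(P_n(x)\big)}{\exp\big(S_n\phi(x)-n\log\lambda\big)}\ \le\ K$$
for every hyperbolic time $n$ of $x$, where $P_n(x)$ is the dynamical cylinder of order $n$ containing $x$. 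This is carried out in Sections~\ref{Existence2} and~\ref{Uniqueness}.

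Next I would verify that $\mu$ is an equilibrium state. The inequality $h_\mu(G)+\int\phi\,d\mu\le P(\phi)=\log\lambda$ is the variational principle. For the reverse inequality one applies Birkhoff's theorem, $\tfrac1n S_n\phi(x)\to\int\phi\,d\mu$, together with the Shannon--McMillan--Breiman/Brin--Katok theorem and the Gibbs bounds above (the bounded gaps between consecutive hyperbolic times, and the fact that hyperbolic--time dynamical balls contract, let one pass from that subsequence to all $n$), obtaining for $\mu$-a.e.\ $x$ that $h_\mu(G)=\lim_n\big(-\tfrac1n\log\mu(P_n(x))\big)=\log\lambda-\int\phi\,d\mu$. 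Hence $h_\mu(G)+\int\phi\,d\mu=P(\phi)$, and existence is proved.

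For uniqueness, let $\eta$ be an ergodic equilibrium state (the general case follows by the ergodic decomposition, since $h_{(\cdot)}(G)$ and $\int\phi\,d(\cdot)$ are affine). Since \eqref{variacaopequena} gives $P(\phi)\ge h_{top}(G)+\inf\phi\ge\log\omega+\inf\phi$, which is strictly above the pressure carried by any invariant measure concentrated on the non--expanding part of $\Lambda$ (where the symbolic model has zero entropy), $\eta$ must be an expanding measure: $\eta$-a.e.\ point has positive frequency of hyperbolic times. For such an $\eta$ one shows, again via bounded distortion at hyperbolic times, that $\eta$ satisfies Gibbs inequalities of the same form as $\mu$ with a uniform constant; two non--lacunary Gibbs states for the same potential are then mutually absolutely continuous, and $\mu$ is ergodic (a consequence of its Gibbs property together with mixing of the symbolic model $\Sigma_A$, as $A$ is primitive, indeed $A^3>0$), so two equivalent ergodic invariant measures must coincide. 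Therefore $\eta=\mu$. I expect the main obstacle to be precisely this expanding--measure dichotomy: because only $R_2$ is uniformly expanding and the degree of $G$ is not constant, ruling out equilibrium states that concentrate on the non--expanding region is delicate, and it is exactly here that the quantitative hypothesis \eqref{variacaopequena}---stronger than what a naive pressure comparison would suggest---is needed, through the large--deviation and distortion estimates behind Theorem~\ref{Teorema B}. A secondary difficulty is controlling the transfer operator and the entropy on the line segments contained in $\Lambda$, where $G$ is only semi--conjugate to $\Sigma_A$.
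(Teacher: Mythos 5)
Your existence argument follows the same skeleton as the paper (set $\mu=h\nu$, verify invariance via duality, show it is a non-lacunary Gibbs measure at hyperbolic times, then compute the pressure), though you reach the entropy identity via Brin--Katok instead of the paper's route, which is Rokhlin's formula $h_\mu(G)=\int\log J_\mu G\,d\mu$ (after building a $\mu$-generating partition from the positivity of the Lyapunov exponents) combined with Pesin's decomposition $P_{top}(G,\phi)=\sup\{P_{top}(G,\mathcal{H}),P_{top}(G,\Lambda\setminus\mathcal{H})\}$. Both routes are viable, but be careful: the hyperbolic times for $\mu$ are non-lacunary ($n_{k+1}/n_k\to1$), not of bounded gaps, so the passage from the subsequence of hyperbolic times to all $n$ in the Brin--Katok limit relies precisely on the non-lacunarity, not on bounded gaps as you wrote.

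The uniqueness argument, however, has a genuine gap. You assert that an arbitrary ergodic equilibrium state $\eta$ "satisfies Gibbs inequalities of the same form as $\mu$, again via bounded distortion at hyperbolic times." Bounded distortion at hyperbolic times controls the ratio $J_\eta G^n(x)/J_\eta G^n(y)$ on a hyperbolic cylinder only once you know what $J_\eta G$ is; for a priori arbitrary $\eta$, the Jacobian is not $\lambda\,e^{-\phi}\,(h\circ G)/h$, and without identifying it you cannot deduce the Gibbs property from distortion alone. The missing ingredient is exactly the paper's Proposition~\ref{jacobiano est eq}: Rokhlin's formula plus the equilibrium identity $h_\eta(G)+\int\phi\,d\eta=\log\lambda$ give $\int\log\big(g\,J_\eta G\big)\,d\eta=0$ with $g=h\,e^{\phi}/(\lambda\, h\circ G)$, and a Jensen/strict-concavity argument applied to $\sum_{y\in G^{-1}(x)}(J_\eta G(y))^{-1}=1$ upgrades this integral equality to the pointwise identity $J_\eta G=1/g$ $\eta$-a.e. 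Only then does $\tau=\eta/h$ become an eigenmeasure of $\mathcal L_\phi^{\ast}$, hence a conformal measure to which the hyperbolic-time distortion estimates apply, giving the non-lacunary Gibbs property for $\eta$ (Lemma~\ref{est eq automedida} and Proposition~\ref{eq.\ eh gibbs}). Your pressure-gap heuristic for expandingness of $\eta$ points in the right direction, but the mechanism the paper actually uses to get $\eta(\mathcal H)=1$ is that $\tau$ is conformal (so the exponential-decay estimate on the "bad" cylinders from Proposition~\ref{dec exp} applies to it), not a direct comparison of topological pressures on $\Lambda\setminus\mathcal H$. Without the Jacobian identification, the uniqueness proof does not close.
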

We point out that our class of potentials includes the zero potential, constant potentials and potentials similar to the ones studied in \cite{LOR} with $t$ close to zero.

%%%%%%%%%%%%%%%%%%%%%%%%
%%%%%%%%%%%%%%%%%%%%%%%%%
%%%%%%%%%%%%%%%%%%%%%%%%%%%

%%%%%%%%%%%%%%%%%%%%%%%%%%%%%%%%%%%%%%%%%%%%%%%%%%%%%%%%%%%%%%%%%%%%%%%%%%%%%%%%%
%%%%%%%%%%%%%%%%%%%%%%%%%%%%%%%%%%%%%%%%%%%%%%%%%%%%%%%%%%%%%%%%%%%%%%%%%%%%%%%%%
%%%%%%%%%%%%%%%%%%%%%%%% OPERADOR DE TRANSFERENCIA  %%%%%%%%%%%%%%%%%%%%%%%%%%%%
%%%%%%%%%%%%%%%%%%%%%%%%%%%%%%%%%%%%%%%%%%%%%%%%%%%%%%%%%%%%%%%%%%%%%%%%%%%%%%%%%
%%%%%%%%%%%%%%%%%%%%%%%%%%%%%%%%%%%%%%%%%%%%%%%%%%%%%%%%%%%%%%%%%%%%%%%%%%%%%%%%%

\subsection{ Ruelle-Perron-Fr\"obenius Operator}
\label{operador transf}

Let $\mathcal{C}(\Lambda)$ be the set of  real continuous functions on $\Lambda$. For a H\"older continuous potential $\phi$ we define an operator $\mathcal{L}_{\phi} : \mathcal{C} \left( \Lambda \right) \rightarrow \mathcal{C}\left(\Lambda \right)$,  called the {\emph{ Ruelle-Perron-Fr\"obenius operator}} which associates for each $\psi \in \mathcal{C} \left( \Lambda \right)$ a function
$ \mathcal{L}_{\phi} (\psi) \colon \Lambda \to \mathbb{R}$ by:
\begin{equation}
\label{optransf}
\mathcal{L}_{\phi} \psi \left(x\right) = \displaystyle\sum _{y  \in \, G^{-1}\left(x\right)} e^{\phi \left(y \right)} \psi \left( y \right). 
\end{equation}
 For each $n \! \in\! \mathbb{N}$ we consider the Birkhoff sum $S_{n}\phi(x)= \displaystyle\sum_{j=0}^{n-1} \phi\big(G^{j}(x)\big)$. Note that, iterating the operator we obtain,  for each $n \in \mathbb{N}$,  
\begin{equation}
\label{iteradostransf}
\mathcal{L}_{\phi}^{n} \psi \left(x\right) = \displaystyle\sum _{y  \in \, G^{-n}\left(x\right)} e^{S_{n}\phi \left(y \right)} \psi \left( y \right).
\end{equation} 
We also consider the dual operator of $ \mathcal{L}_{\phi}$, $ \mathcal{L}_{\phi}^{\ast}: \mathcal{M}(\Lambda) \to \mathcal{M}(\Lambda)$ that satisfies
$$\int \psi \ d\mathcal{L}_{\phi}^{\ast}\eta  = \int  \mathcal{L}_{\phi}(  \psi ) \ d\eta , $$
for every $\psi \in  \mathcal{C}(\Lambda) $ and every $\eta \in \mathcal{M}(\Lambda)  $.
Now we can stablish the second main theorem of this paper. Consider $\phi$ as in the statement of the Theorem~\ref{Teorema Principal}.

\begin{maintheorem}\label{Teorema B}
Let $\lambda$ be the spectral radius of the transfer operator $\mathcal{L}_{\phi}$.
There exists a probability measure $\nu \in \, \mathcal{M} \left(\Lambda\right)$ and  there exists a positive function $h: \Lambda \to \mathbb{R}$ wich satisfies:
$$\mathcal{L}_{\phi}^{\ast} \nu = \lambda \nu  \quad \mbox{e} \quad  \mathcal{L}_{\phi} h = \lambda h .  $$  
\end{maintheorem}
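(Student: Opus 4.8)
The plan is to obtain the eigenmeasure $\nu$ first, by a fixed-point argument applied to the normalized dual operator, and then to produce the eigenfunction $h$ as a limit of Cesàro averages of the normalized transfer operator applied to the constant function $1$.

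First I would construct $\nu$. Consider the map $\mathcal{F}\colon \mathcal{M}(\Lambda)\to\mathcal{M}(\Lambda)$ defined by
\[
\mathcal{F}(\eta) = \frac{\mathcal{L}_{\phi}^{\ast}\eta}{\mathcal{L}_{\phi}^{\ast}\eta(\Lambda)} = \frac{\mathcal{L}_{\phi}^{\ast}\eta}{\int \mathcal{L}_{\phi}(1)\, d\eta}.
\]
Since $\phi$ is H\"older continuous, $\mathcal{L}_{\phi}(1)$ is bounded away from $0$ and $\infty$ (it is a finite sum of the positive terms $e^{\phi(y)}$ over preimages, and by the structure of $G$ there is at least one preimage), so $\mathcal{F}$ is well-defined and continuous for the weak$^*$ topology; $\mathcal{M}(\Lambda)$ is convex and weak$^*$-compact, so the Schauder–Tychonoff fixed point theorem yields a probability $\nu$ with $\mathcal{F}(\nu)=\nu$, i.e. $\mathcal{L}_{\phi}^{\ast}\nu = \lambda_{\nu}\,\nu$ where $\lambda_{\nu} = \int \mathcal{L}_{\phi}(1)\, d\nu$. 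The remaining point for this part is to identify $\lambda_\nu$ with the spectral radius $\lambda$ of $\mathcal{L}_{\phi}$: one shows $\lambda_\nu = \lim_n \big(\int \mathcal{L}_\phi^n(1)\, d\nu\big)^{1/n}$ using $\int \mathcal{L}_\phi^n \psi\, d\nu = \lambda_\nu^n \int \psi\, d\nu$, and that this limit coincides with $\lim_n \|\mathcal{L}_\phi^n(1)\|_\infty^{1/n}$, which is the spectral radius on $\mathcal{C}(\Lambda)$ by Gelfand's formula together with the fact that $\mathcal{L}_\phi^n(1)$ controls $\|\mathcal{L}_\phi^n\|$ up to the multiplicative constant $e^{\,n(\sup\phi-\inf\phi)}$ — actually more carefully, $\|\mathcal L_\phi^n\psi\|_\infty \le \|\psi\|_\infty\,\|\mathcal L_\phi^n 1\|_\infty$, giving $\operatorname{spr}(\mathcal{L}_\phi)\le \limsup_n\|\mathcal L_\phi^n1\|_\infty^{1/n}$, and the reverse inequality is trivial.

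Next I would build $h$. Set $\mathcal{L} := \lambda^{-1}\mathcal{L}_{\phi}$, so that $\mathcal{L}^{\ast}\nu = \nu$, and define
\[
h_n = \frac{1}{n}\sum_{j=0}^{n-1}\mathcal{L}^{j}(1).
\]
Each $h_n$ is a nonnegative continuous function with $\int h_n\, d\nu = 1$. The heart of the argument — and the step I expect to be the main obstacle — is to establish a uniform bound and an equicontinuity estimate for the family $\{h_n\}$, so that by Arzelà–Ascoli a subsequence converges uniformly to some $h\ge 0$ with $\int h\, d\nu = 1$; the telescoping identity $\mathcal{L}(h_n) - h_n = \frac{1}{n}(\mathcal{L}^n(1) - 1)$ then forces $\mathcal{L}(h) = h$, i.e. $\mathcal{L}_\phi h = \lambda h$, once one knows $\tfrac1n\|\mathcal L^n(1)\|_\infty\to 0$ (equivalently that $\{\mathcal{L}^n(1)\}$ is uniformly bounded). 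The required estimates are exactly where the smallness hypothesis $\sup\phi-\inf\phi < \tfrac{\log\omega}{2}$ enters: because the degree of $G$ is not constant (preimages number one or two depending on the rectangle) and the combinatorics are governed by the Fibonacci-type matrix $A$ with $\|A^n\|\sim\omega^n$, a naive bound on $\mathcal{L}^n(1)$ in terms of $e^{n(\sup\phi-\inf\phi)}$ and the branching would only close up when the potential variation is dominated by a fixed fraction of $\log\omega$. Concretely, one controls $\mathcal{L}^n(1)(x) = \sum_{y\in G^{-n}(x)} e^{S_n\phi(y)}$ from above and below by $e^{\pm n(\sup\phi-\inf\phi)/2}$ times an expression depending only on the number $N_n(x) = \#G^{-n}(x)$ of $n$-preimages, and then compares $N_n(x)$ across different $x$ using bounded distortion along inverse branches (the uniform expansion $\alpha = \rho^{-1} > 3$ in the vertical direction and the explicit form of $g_0, g_1$ give H\"older control of $S_n\phi$ on inverse branches) — this is precisely the place to invoke the notion of hyperbolic times announced in the introduction to guarantee that, off a small set, inverse branches contract at a definite rate and distortion is summable. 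Once the ratio $h_n(x)/h_n(x')$ is bounded uniformly in $n$ and $x,x'$ — which is where $\tfrac{\log\omega}{2}$ must beat the entropy of the branching — uniform boundedness and equicontinuity of $\{h_n\}$ follow, and positivity of the limit $h$ is inherited from the lower bound $h_n\ge c>0$, completing the proof.
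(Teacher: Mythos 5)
Your high-level outline resembles the paper's (eigenmeasure first, then an eigenfunction obtained as an Arzelà–Ascoli limit of Cesàro averages of $\lambda^{-j}$-weighted iterates), and you correctly identify hyperbolic times and bounded distortion as the relevant tools. But there is a genuine gap in the central step, and it stems from averaging the wrong object.

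You propose to average $\mathcal{L}^{j}(1)=\lambda^{-j}\mathcal{L}_{\phi}^{j}(1)$, i.e.\ the \emph{full} transfer operator applied to $1$, and you claim that uniform boundedness of $\{h_n\}$ follows from the smallness hypothesis $\sup\phi-\inf\phi<\tfrac{\log\omega}{2}$ plus distortion. The crude bound is
$$\lambda^{-n}\mathcal{L}_{\phi}^{n}(1)(x)\le \lambda^{-n}\,\#G^{-n}(x)\,e^{n\sup\phi}\le C\,\omega^{n}\,e^{-n(\inf\phi+\log\omega)}\,e^{n\sup\phi}=C\,e^{n(\sup\phi-\inf\phi)},$$
which still grows exponentially under your hypothesis; the smallness assumption only slows the growth, it does not close it. The finer decomposition $\mathcal{L}_{\phi}^n(1)=T_n+(\text{rest})$ does not repair this either, because the "rest" includes preimages lying in cylinders generated by \emph{good} admissible words (not in $I(\gamma,n)$) for which $n$ is nevertheless not a hyperbolic time — Proposition~\ref{temp hiperbolicos} only guarantees a positive proportion of hyperbolic times among $1,\dots,n$, not that $n$ itself is one — and no distortion or counting estimate in the paper controls that contribution at time $n$. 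Equicontinuity of $\{h_n\}$ faces the same obstruction, since the Hölder estimate (Proposition~\ref{distorcao}) is only available inside hyperbolic cylinders.

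This is exactly why the paper instead averages $\lambda^{-i}T_i$ with $T_n(x)=\sum e^{S_n\phi(y)}$ restricted to preimages $y$ lying in \emph{hyperbolic} cylinders of length $n$. For that object the Gibbs estimate (Proposition~\ref{Gibbs}) gives $\lambda^{-n}T_n\le \lambda^{-n}Z_n\le K_2\,\nu\big(\bigcup_{R^n\text{ hyp}}R^n\big)\le K_2$ directly, and Hölder continuity with uniform constants (Lemma~\ref{Tn Holder}) comes from distortion on hyperbolic cylinders. One then pays a price: $\mathcal{L}_\phi T_k\ne T_{k+1}$, so the telescoping argument requires the additional Proposition~\ref{infernal}, which is where $\sup\phi-\inf\phi<\tfrac{\log\omega}{2}$ is actually used, via the count $\#I(\gamma,k+1)\le e^{\beta(k+1)}$ and $\lambda\ge e^{\inf\phi+\log\omega}$. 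Your proposal would also need some version of this comparison between $\mathcal{L}_\phi T_k$ and $T_{k+1}$ if it adopted the restricted sums, so the truncation is not a cosmetic change but the load-bearing idea. On the eigenmeasure side, your Schauder–Tychonoff fixed-point construction is fine and standard, but your identification $\lambda_\nu=\operatorname{spr}(\mathcal{L}_\phi)$ only yields $\lambda_\nu\le \lim_n\|\mathcal{L}_\phi^n 1\|_\infty^{1/n}$ immediately; the reverse inequality needs an argument (a positive operator can have eigenmeasures with strictly smaller eigenvalues). The paper avoids this by invoking Mazur's theorem, which directly produces an eigenmeasure for the dual at the spectral radius.
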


\begin{maintheorem}\label{Teo C}
Let $F: \tilde{R}_0 \cup \tilde{R}_1 \to R$ be the three dimensional partially hyperbolic horseshoe defined in the begin of this section. Let $ \varphi: \tilde{R}_0 \cup \tilde{R}_1 \to \mathbb{R}$ be a H\"older continuous potential with $\sup \varphi - \inf \varphi < \log\frac{\omega}{2}$. Assume that $\varphi$ does not depend on the $z$-coordinate in each set$\tilde{R}_0$ and $\tilde{R}_1 $. Then there exists a unique equilibrium state for the system $F$ with respect to the potential $\varphi$.
\end{maintheorem}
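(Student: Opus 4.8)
The strategy is to transfer the problem, via a projection collapsing the contracting direction of $F^{-1}$, to the two-dimensional map $G$ of Theorem~\ref{Teorema Principal}, and then to lift the equilibrium state back to the horseshoe. Let $\Lambda_F$ be the maximal $F$-invariant set contained in $\tilde R_0\cup\tilde R_1$; on it $F$ is a homeomorphism, so a probability is $F$-invariant if and only if it is $F^{-1}$-invariant, with the same entropy and the same integral of $\varphi$, and it is enough to treat $(F^{-1},\varphi)$. Because the $F$-invariant splitting is the coordinate splitting and $F$ expands the $z$-direction by the factors $\beta>6$, $\beta_1\in(3,4)$, the map $F^{-1}$ contracts the $z$-direction uniformly and the local unstable leaves of $F$ are exactly the $z$-segments meeting $\Lambda_F$. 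Collapsing these leaves is what produces $G$: adapting the coding of \cite{diaz et al} that gives the semiconjugacy to $\Sigma_A$ recalled in Section~\ref{main theorems}, one builds a continuous surjection $\pi\colon\Lambda_F\to\Lambda$ whose fibres are the traces on $\Lambda_F$ of those $z$-segments, satisfying $G\circ\pi=\pi\circ F^{-1}$; in particular $d\big(F^{-n}(P),F^{-n}(P')\big)\to0$ whenever $P,P'$ lie in the same fibre of $\pi$.

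Since $\varphi$ does not depend on $z$ on $\tilde R_0$ nor on $\tilde R_1$, it is constant on the fibres of $\pi$, so $\varphi=\phi\circ\pi$ for a function $\phi\colon\Lambda\to\mathbb R$. As $\pi$ is a piecewise bi-Lipschitz relabeling of the $(x,y)$-coordinates (its only nonlinear ingredient being the diffeomorphism $g_0=f^{-1}$ of $[0,1]$, whose derivative is bounded and bounded away from $0$), $\phi$ is H\"older continuous with $\sup\phi-\inf\phi=\sup\varphi-\inf\varphi$; hence $\phi$ lies in the class of potentials covered by Theorem~\ref{Teorema Principal}, and we let $\mu_\phi$ be the associated unique equilibrium state of $(G,\phi)$.

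The structural heart of the proof is that $(\Lambda_F,F^{-1})$ is the natural extension of $(\Lambda,G)$. Indeed, the uniform contraction of $F^{-1}$ in $z$, together with the fact that the branch itinerary of a point is encoded in the sequence $\big(\pi(F^nP)\big)_{n\ge0}$ (the $x$-images of $F_0$ and $F_1$ being disjoint), shows that the $z$-coordinate of $P\in\Lambda_F$ is determined by that sequence, so $P\mapsto\big(\pi P,\pi(FP),\pi(F^2P),\dots\big)$ is a topological conjugacy of $(\Lambda_F,F^{-1})$ onto the inverse limit of $(\Lambda,G)$. By Rokhlin's theorem $\nu\mapsto\pi_*\nu$ is then a bijection from $F^{-1}$-invariant probabilities onto $G$-invariant probabilities with $h_\nu(F^{-1})=h_{\pi_*\nu}(G)$ for all such $\nu$. (One can also argue directly: surjectivity of $\pi_*$ because the set of $F^{-1}$-invariant lifts of a given $G$-invariant measure is a nonempty compact convex set fixed by $F^{-1}_*$; injectivity because the relatively independent joining of two measures with a common projection is supported on pairs made forward-asymptotic by $F^{-1}$, which forces it onto the diagonal; and $h_\nu(F^{-1})=h_{\pi_*\nu}(G)$ from Bowen's inequality $h_\nu(F^{-1})\le h_{\pi_*\nu}(G)+\sup_p h_{top}\big(F^{-1},\pi^{-1}(p)\big)$ and $h_{top}\big(F^{-1},\pi^{-1}(p)\big)=0$ for the contracted fibres.)

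Combining all this, for every $F$-invariant probability $\nu$ one has $\int\varphi\,d\nu=\int\phi\,d(\pi_*\nu)$ and $h_\nu(F)=h_{\pi_*\nu}(G)$, hence
\begin{align*}
h_\nu(F)+\int\varphi\,d\nu &= h_{\pi_*\nu}(G)+\int\phi\,d(\pi_*\nu)\\
&\le h_{\mu_\phi}(G)+\int\phi\,d\mu_\phi,
\end{align*}
with equality precisely when $\pi_*\nu$ is an equilibrium state of $(G,\phi)$, i.e.\ precisely when $\pi_*\nu=\mu_\phi$. Since there is a unique $F$-invariant $\tilde\mu$ with $\pi_*\tilde\mu=\mu_\phi$ and it attains the supremum, $\tilde\mu$ is the unique equilibrium state of $F$ with respect to $\varphi$ (and the pressures of $(F,\varphi)$ and $(G,\phi)$ agree). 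The genuinely nontrivial step, I expect, is the construction of $\pi$ together with the verification that it is onto $\Lambda$, intertwines $F^{-1}$ with $G$, and has uniformly contracted fibres: the degree of $F$ (hence of $G$) is not constant, the two branches $F_0,F_1$ and the planes $\{z=0\}$, $\{z=5/6\}$ must be reconciled with the three rectangles $R_1,R_2,R_3$, and neither $\Lambda_F$ nor $\Lambda$ is a Cantor set (each contains segments collapsed by the symbolic coding to the constant sequence), so the identification has to be carried out through the geometry of the $z$-fibres rather than read off from symbolic dynamics. Everything after that is soft, and it is precisely the uniform contraction of $F^{-1}$ along $z$ that makes the measure and entropy correspondence work.
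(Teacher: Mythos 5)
Your proposal is correct and reaches the same conclusion, but by invoking a genuinely different piece of machinery than the paper does for the key ``lift'' step. Where the paper builds the measure $\mu^\star$ on the horseshoe by hand --- defining it on the increasing chain of $\sigma$-algebras $\mathcal{A}_n = F^{-n}\pi^{-1}(\mathcal{A}_\Lambda)$, verifying $\sigma$-additivity via a compact-class extension criterion, showing that $\bigcup_n\mathcal{A}_n$ generates the Borel $\sigma$-algebra, and then comparing entropies through a Brin--Katok estimate --- you identify $(\Lambda_F,F^{-1})$ outright as the natural extension of $(\Lambda,G)$ and cite Rokhlin's theorem for the entropy-preserving bijection $\pi_*$ on invariant measures. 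The paper's explicit construction is really a realization of the natural extension, and its uniqueness argument (a measure on $\bigcup_n\mathcal{A}_n$ is determined by its restriction to $\pi^{-1}(\mathcal{A}_\Lambda)$ together with $F^{-1}$-invariance) is exactly the injectivity of $\pi_*$ that you obtain for free. Both routes ultimately rest on the same two facts --- the semiconjugacy $\pi\circ F^{-1}=G\circ\pi$ and the vanishing of fibre entropy $h(F^{-1},\pi^{-1}(X))=0$ from the uniform $z$-contraction of $F^{-1}$ --- which the paper deploys through the Ledrappier--Walters inequality exactly where you invoke Bowen's inequality. The one piece your version makes visible (and the paper leaves implicit) is the verification that $P\mapsto(\pi P,\pi FP,\pi F^2P,\dots)$ is a bijection onto the inverse limit: you correctly reduce this to the facts that the $z$-coordinate of a point of $\Lambda_F$ is recovered from its forward $\{\tilde R_0,\tilde R_1\}$-itinerary (uniform $z$-expansion of $F$, rates $\beta>6$ and $\beta_1>3$), and that $G^{-1}(x)$ meets each plane $P_0,P_1$ in at most one point, so the plane-sequence read from a backward $G$-orbit pins down the preimages uniquely. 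Two small corrections: the projection $\pi$ is just the coordinate collapse $(x,y,z)\mapsto(x,y,z_i)$ and does not involve $g_0=f^{-1}$ (that map is part of $G$, not $\pi$), so your parenthetical in the H\"older-transfer step is off, though the conclusion stands since $\pi$ is $1$-Lipschitz and $\phi$ is a slice of $\varphi$; and the bound $\sup\varphi-\inf\varphi<\log\frac{\omega}{2}$ in the theorem statement is a typo for $\frac{\log\omega}{2}$ (as written it is negative), which you implicitly corrected by appealing to Theorem~\ref{Teorema Principal}.
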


%
%Besides we use the Theorem~\ref{Teorema B} to prove the Theorem~\ref{Teorema Principal} it has its own interested. 

%%%%%%%%%%%%%%%%%%%%%%%%%%%%%%%%%%%%%%%%%%%%%%%
%%%%%%%%%%%%%%%%%%%%%%%%%%%%%%%%%%%%%%%%%%%%%%%
%%%%%%%%%%%%%%%%%%%%%%%%%%%%%%%%%%%%%%%%%%%%%%%

%%%%%%%%%%%%%%%%%%%%%%%%%%%%%%%%%%%%%%%%%%%%%%%%%%%%%%%%%%%%%%%%%%%%%%%%%%%%%%%%%%%%%%%%
%%%%%%%%%%%%%%%%%%%%%%%%%%%  SECTION %%%%%%%%%%%%%%%%%%%%%%%%%%%%%%%%%%%%%%%%%%%%%%%%%%
%%%%%%%%%%%%%%%%%%%%%%%%%%%%%%%%%%%%%%%%%%%%%%%%%%%%%%%%%%%%%%%%%%%%%%%%%%%%%%%%%%%%%%%%
%%%%%%%%%%%%%%%%%%%%%%%%%%%%%%%%%%%%%%%%%%%%%%%%%%%%%%%%%%%%%%%%%%%%%%%%%%%%%%%%%%%%%%%%

\section{ Proof of Theorem B }
\label{prova teo B}

We consider $K\!=\!\{ f  \in \mathcal{C}(\Lambda) \, | \, f \geq 0 \}$ the cone of continuous non-negative functions on $\Lambda$. Then $K$ is a normal cone which gives a partial order on $\mathcal{C}(\Lambda)$. Since $\mathcal{L}_{\phi}$ is a positive and bounded operator, it follows from Mazur's Theorem [see \cite{analise funcional 2}] that:

\begin{Prop}\label{med referencia}
The spectral radius of $\mathcal{L}_{\phi}$, $\lambda$, is an eigenvalue for the dual operator $\mathcal{L}_{\phi}^ {\ast} $ associated an eigenmeasure $\nu$.
\end{Prop}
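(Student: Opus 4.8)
The plan is to obtain $\nu$ as a positive eigenfunctional of the transfer operator, using the theory of positive operators on a Banach space ordered by a normal cone. First I would record the two structural facts already at hand: $K = \{f \in \mathcal{C}(\Lambda) : f \geq 0\}$ is a closed, normal, generating cone in the Banach space $(\mathcal{C}(\Lambda), \|\cdot\|_\infty)$, and $\mathcal{L}_{\phi}$ is a positive operator, since from the defining formula $\mathcal{L}_{\phi}\psi(x) = \sum_{y \in G^{-1}(x)} e^{\phi(y)}\psi(y) \geq 0$ whenever $\psi \geq 0$ (the sum is finite because every point has one or two preimages, and $e^{\phi} > 0$); moreover $\mathcal{L}_{\phi}$ is bounded, with $\|\mathcal{L}_{\phi}\| \leq 2\,e^{\sup \phi} < \infty$. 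Thus the order structure and positivity required by the cited result are in place.

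Next I would invoke the quoted theorem (the Krein--Rutman / Mazur-type statement of \cite{analise funcional 2}): for a bounded positive operator $T$ acting on a Banach space ordered by a normal cone, the spectral radius $r(T)$ belongs to the spectrum of $T$, and $r(T)$ is an eigenvalue of the adjoint $T^{*}$ with an eigenvector lying in the dual cone $K^{*}$. Applying this to $T = \mathcal{L}_{\phi}$ and $\lambda = r(\mathcal{L}_{\phi})$ produces a nonzero functional $\ell \in \mathcal{C}(\Lambda)^{*}$ with $\ell \in K^{*}$ and $\mathcal{L}_{\phi}^{*}\ell = \lambda\,\ell$. Then I would translate this back into measure language: by the Riesz representation theorem $\mathcal{C}(\Lambda)^{*}$ is isometric to the space of finite signed regular Borel measures on the compact metric space $\Lambda$, and under this identification $K^{*}$ is exactly the set of non-negative finite measures. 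Hence $\ell$ corresponds to a nonzero measure $\tilde{\nu} \geq 0$ with $\int \mathcal{L}_{\phi}\psi \, d\tilde{\nu} = \lambda \int \psi \, d\tilde{\nu}$ for all $\psi \in \mathcal{C}(\Lambda)$, that is, $\mathcal{L}_{\phi}^{*}\tilde{\nu} = \lambda\,\tilde{\nu}$. Since this relation is homogeneous, normalizing gives the probability measure $\nu := \tilde{\nu}/\tilde{\nu}(\Lambda) \in \mathcal{M}(\Lambda)$ with $\mathcal{L}_{\phi}^{*}\nu = \lambda\,\nu$, which is the assertion.

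I expect the delicate point to be precisely the claim that the eigenvalue so produced is the spectral radius, and not merely some point of the spectrum: this is exactly what forces us into the positive-operator framework instead of a bare fixed-point argument. A more hands-on alternative would define $\Phi \colon \mathcal{M}(\Lambda) \to \mathcal{M}(\Lambda)$ by $\Phi(\eta) = \mathcal{L}_{\phi}^{*}\eta \big/ \int \mathcal{L}_{\phi}\mathbf{1}\, d\eta$ on the weak$^{*}$-compact convex set of probability measures, which is weak$^{*}$-continuous, apply the Schauder--Tychonoff theorem to get a fixed point $\nu$ with $\mathcal{L}_{\phi}^{*}\nu = \lambda_{\nu}\,\nu$ for $\lambda_{\nu} = \int \mathcal{L}_{\phi}\mathbf{1}\, d\nu$, and then separately prove $\lambda_{\nu} = r(\mathcal{L}_{\phi})$ by comparing $\int \mathcal{L}_{\phi}^{n}\mathbf{1}\, d\nu = \lambda_{\nu}^{\,n}$ with $\|\mathcal{L}_{\phi}^{n}\|^{1/n} \to r(\mathcal{L}_{\phi})$, using positivity of $\mathcal{L}_{\phi}$ to squeeze $\|\mathcal{L}_{\phi}^{n}\mathbf{1}\|_{\infty}$ between $\lambda_{\nu}^{\,n}$ and a uniform constant multiple of it. Either way, the core of the argument is the interplay between positivity of $\mathcal{L}_{\phi}$ and normality of the cone $K$, which is exactly what the cited theorem packages.
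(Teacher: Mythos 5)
Your argument is correct and follows essentially the same route as the paper: note that $K$ is a normal cone making $\mathcal{L}_{\phi}$ a bounded positive operator, invoke the Mazur/Krein--Rutman-type theorem from \cite{analise funcional 2} to get a positive eigenfunctional of $\mathcal{L}_{\phi}^{*}$ at the spectral radius, and identify it with a probability measure via Riesz representation. The paper states this in a single line with the citation; you simply spell out the verification of hypotheses and the translation into measure language, which is fine.
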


We can stablish a lower bound to the eigenvalue of $\mathcal{L}_{\phi}^{\ast}$.

\begin{Lema}\label{cota lambda}
The spectral radius of  $\mathcal{L}_{\phi}$ , $r(\mathcal{L}_{\phi})= \lambda$, sastisfies
              $$\lambda \geq e^{\inf \phi + \log \omega} ,   \qquad \mbox{where} \ \omega = \left(\frac{ 1 + \sqrt{5}}{2}\right).$$ 
\end{Lema}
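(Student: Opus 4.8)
The plan is to obtain the lower bound $\lambda \geq e^{\inf\phi + \log\omega}$ by exhibiting, at a well-chosen point $x \in \Lambda$, enough preimages under $G^n$ to force $\mathcal{L}_\phi^n \mathbf{1}(x)$ to grow at least like $e^{n(\inf\phi)}\,\omega^n$, and then using the standard formula $r(\mathcal{L}_\phi) = \lim_{n\to\infty}\|\mathcal{L}_\phi^n\|^{1/n}$ together with $\|\mathcal{L}_\phi^n\| \geq \|\mathcal{L}_\phi^n\mathbf{1}\|_\infty \geq \mathcal{L}_\phi^n\mathbf{1}(x)$. From the definition of the iterated operator,
$$
\mathcal{L}_\phi^n \mathbf{1}(x) = \sum_{y \in G^{-n}(x)} e^{S_n\phi(y)} \geq e^{n\inf\phi}\,\#G^{-n}(x),
$$
so everything reduces to producing a point $x$ whose number of $n$-th preimages grows like $\omega^n$; since $\omega = (1+\sqrt5)/2$ is the spectral radius of the transition matrix $A$, this is exactly the combinatorial content of the subshift.

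First I would choose $x$ to lie in the rectangle $R_2$, or more precisely in the part of $\Lambda \cap R_2$ whose itinerary is genuinely coded by $\Sigma_A$ (avoiding the degenerate horizontal segments, such as the one collapsing to the fixed sequence $(1,1,1,\dots)$, which carry no branching). The point is that for such an $x$ the preimages under $G^n$ are in bijection with the admissible words $\theta_0\theta_1\cdots\theta_n$ in $\Sigma_A$ ending at the symbol $2$ (the index of $R_2$), because each rectangle $R_i$ with $A_{ij}=1$ contributes exactly one preimage branch mapping onto $R_j$, the branches $G_i$ being injective on each $R_i$. Hence $\#G^{-n}(x) = \#\{\text{admissible words of length } n+1 \text{ ending in } 2\}$, which is the appropriate entry of the row/column sums of $A^n$.

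Next I would estimate that count from below by $\omega^n$ (up to a constant independent of $n$): since $A$ is a non-negative integer matrix with spectral radius $\omega$, and since $A$ is irreducible (the allowed transitions $1\to1,2$; $2\to3$; $3\to1,2$ make the associated graph strongly connected), the Perron–Frobenius theorem gives positive left and right eigenvectors for $\omega$, and therefore $(A^n)_{ij} \geq c\,\omega^n$ for a constant $c>0$ and a suitable pair $(i,j)$ — in particular one can read off that the number of admissible $(n+1)$-words ending at a fixed symbol is bounded below by $c\,\omega^n$. Feeding this into the displayed inequality yields $\mathcal{L}_\phi^n\mathbf{1}(x) \geq c\,e^{n\inf\phi}\,\omega^n$, and taking $n$-th roots and letting $n\to\infty$ gives $\lambda = r(\mathcal{L}_\phi) \geq e^{\inf\phi}\,\omega = e^{\inf\phi + \log\omega}$.

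The main obstacle, and the place that needs genuine care rather than routine bookkeeping, is the first step: justifying that for a suitably chosen $x$ the map $G^{-n}$ really has $\#\{\text{admissible words}\}$ many distinct preimages. Because $G$ is only semi-conjugate (not conjugate) to the shift, and $\Lambda$ contains whole line segments, one must pick $x$ in the non-degenerate part of $\Lambda$ and check that distinct admissible itineraries do produce distinct preimage points — i.e. that no collapsing occurs along the branch $R_2 \to R_3$ and the branches back into $R_1, R_2$. This is where the expanding behaviour of $G$ on $R_2$ in both directions (noted in the text) is used: it guarantees the cylinder sets shrink and the coding is faithful at $x$. Once this faithfulness is in hand, the rest is the Perron–Frobenius estimate above.
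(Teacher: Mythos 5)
Your proof is correct and follows essentially the same route as the paper: bound $\mathcal{L}_\phi^n\mathbf 1(x)\ge e^{n\inf\phi}\,\#G^{-n}(x)$ and use that the preimage count grows like $\omega^n$; the paper simply asserts $\limsup_n\frac1n\log\#G^{-n}(x)=\log\omega$ from the semiconjugacy, whereas you spell out the Perron--Frobenius estimate on the primitive matrix $A$. One small correction to your discussion of the obstacle: you do not need to pick $x$ in the non-degenerate part of $\Lambda$, because the branches $G_i|_{R_i}$ are injective and the rectangles are pairwise disjoint, so $\#G^{-n}(x)$ equals the number of admissible $(n+1)$-words ending at the symbol of $x$ for \emph{every} $x\in\Lambda$ --- the failure of the coding to be a conjugacy concerns distinct points sharing an itinerary, not distinct itineraries collapsing to the same preimage of a fixed $x$.
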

\begin{proof}
By definition $\lambda = r(\mathcal{L}_{\phi})= \limsup \| \mathcal{L}_{\phi}^{n}  \| ^{\frac{1}{n}}.$ Therefore:
$$\| \mathcal{L}_{\phi}^{n} \mathtt{1} (x) \| = \displaystyle\sum_{y \in G^{-n}(x)} e^{S_{n}\phi (y)} \geq \# \{G^{-n}(x) \}\cdot e^{n \inf \phi}.$$ 
Then, we have
   $$\lambda \geq \limsup [\# \{G^{-n}(x) \}\cdot e^{n \inf \phi}]^{\frac{1}{n}} = e^{\inf \phi } e^{\log \omega} = e^{\inf \phi + \log \omega}$$
since $ \limsup_{n \rightarrow \infty} \frac{1}{n} \log \# \{G^{-n}(x) \} =\log \omega $.
\end{proof}

%%%%%%%%%%%%%%%%%%%%%%%%%%%%%%%%%%%%%%%%%%%%%%%%%%%%%%%%%%%%%%%%%%%%%%%%%%%%%%%%%%%%%%%%%%%%%%%%%%%%%%%%%%%%%%%%
%%%%%%%%%%%%%%%%%%%%%%%%%%%%%%%%%%%%%%%%%%%%%%%%%%%%%%%%%%%%%%%%%%%%%%%%%%%%%%%%%%%%%%%%%%%%%%%%%%%%%%%%%%%%%%%%
%%%%%%%%%%%%%%%%%%%%%%%%%%%%% SECTION %%%%%%%%%%%%%%%%%%%%%%%%%%%%%%%%%%%%%%%%%%%
%%%%%%%%%%%%%%%%%%%%%%%%%%%%%%%%%%%%%%%%%%%%%%%%%%%%%%%%%%%%%%%%%%%%%%%%%%%%%%%%%%%%%%%%%%%%%%%%%%%%%%%%%%%%%%%%
%%%%%%%%%%%%%%%%%%%%%%%%%%%%%%%%%%%%%%%%%%%%%%%%%%%%%%%%%%%%%%%%%%%%%%%%%%%%%%%

\subsection{Hyperbolic times}
\label{Hyperbolic times}

Given $n\in \mathbb{N}$ and $n$ symbols $a_{j} \in \{ 1,2,3 \}$ we define the \emph{cylinder set of length} $n$ by:
$$\mathcal{R}^{n}\left( a_{0}, \cdots , a_{n-1} \right)= \left\{ x \! \in \! \Lambda \, | \, x \! \in \! R_{a_{0}},G(x) \! \in \! R_{a_{1}}, \cdots,G^{n-1}(x) \! \in \! R_{a_{n-1}}  \right\}.$$ 
When there is no risk of confusion, we  denote $\mathcal{R}^{n}$ to refer to $ \mathcal{R}^{n}\left( a_{0}, \cdots , a_{n-1} \right)$.

For each $\gamma \in (0,1)$ and $n \geq 1$  let  $I(\gamma,n)$ be the set of  admissible words of length $n$ such that the proportion of symbols $1$ and $3$ are at least $\gamma$. That is 
  $$ \# \left\{0 \leq j \leq n -1 | a _{j} = 1,3 \right\} > \gamma n . $$ 
%If $\gamma$ satisfies
%\begin{equation}\label{condicao sobre gamma}
%   \sigma ^{ - \left(1 - \gamma \right)} \cdot \left(\frac{1}{e}\right)^{\gamma} > 1 ,      
%\end{equation}
%there is $c>0$  such that
%\begin{equation}\label{condicao sobre c}
%   \sigma ^{ - \left(1 - \gamma \right)} \cdot \left(\frac{1}{e}\right)^{\gamma} > e^{ 4 c} > 1 .      
%\end{equation}
%
%For that it is enough to consider $ 0< \gamma < \frac{\log(\sigma^{-1})}{\log(\sigma^{-1} + 1)}$ . 
 
We can associate words in $I(\gamma,n)$ with orbits, or more precisely, with cylinders sets.  The points in these cylinders spend a big fraction of their orbits in the "non-hyperbolic" part of the system.
We will  study the  exponential growth of the cardinality of these sets $I(\gamma, n)$. 
We can estimate the cardinality by counting the number of admissible words of length $n$ whithin at most $(1- \gamma) n$ symbols $2$. 

By definition, to compose a word with $k$ symbols $2$ we have to follow the conditions below:
\begin{itemize}
\item a symbol two is followed by a three.
\item a symbol $3$ is followed by $1$ or $2$.
\item a symbol $1$ is followed by $1$ or $2$.
\end{itemize}

It means that once we have distributed the symbols $2$ the other symbols, except possibly for the first one, are already determined. Following this rules the number of words of length $n$ with $k$ symbols $2$ is given by $\left( 
           \begin{array}{c}                                                                        
           n-k \\
           k \\
           \end{array}  
           \right). $
If the first symbol in the word is not a $2$ we have two choices: $1$ or $3$.  According to this we have the follow estimate:
%
%Para distribuir os símbolos $2$ temos $
%\left(
%\begin{array}{c}                                                                        
%           n-k \\
%           k \\
%           \end{array}  
%\right)           $
%escolhas, visto que cada $2$ vem seguido de um símbolo 3.         
\begin{equation}
           \# I(\gamma,n) \leq \displaystyle\sum _{k=0}^{(1-\gamma)n}2\left( 
           \begin{array}{c}                                                                        
           n-k \\
           k \\
           \end{array}  
           \right).  \label{cardinalidade}        
\end{equation}

%\begin{Lema}[Fórmula de Stirling]\label{formula de stirling}
%Dado $n\geq 1$ tem-se
% $$ n! = \sqrt{2 \pi n} \left( \frac{n}{e}\right)^{n}e^{\lambda_{n}} \qquad \mbox{onde} \qquad   \frac{1}{12n + 1} < \lambda_{n} <  \frac{1}{12n} .$$
%\end{Lema}

\begin{Lema}\label{cota fatorial}
For each $n \in \mathbb{N}$, and $0 < k < n$ we have: 
\begin{eqnarray*}
           \left( 
           \begin{array}{c}
           n-k \\
           k \\
           \end{array}  
           \right)
&\leq & \frac{e^k}{\sqrt{2\pi k}\cdot e^{\frac{1}{12k + 1}}}\cdot \left(\frac{n}{k} -1 \right)^k.
\end{eqnarray*}
\end{Lema}
\begin{proof}
In fact,
\begin{eqnarray*}
           \left( 
           \begin{array}{c}
           n-k \\
           k \\
           \end{array}  
           \right)
&  =  & \frac{(n-k)!}{k!(n-2k)} = \frac{(n-k) (n-k-1)\cdots (n-k-(k+1))}{k!} \\
&\leq & \frac{(n-k)^{k}}{k!}.
\end{eqnarray*}
By Stirling's Formula, see \cite{Bartle},  we have:
$$k!= \sqrt{2 \pi k} \left( \frac{k}{e}\right)^{k}e^{\lambda_{k}} > \sqrt{2 \pi k} \left( \frac{k}{e}\right)^{k}e^{\frac{1}{12k + 1}}.$$
Then
\begin{eqnarray*}
           \left( 
           \begin{array}{c}
           n-k \\
           k \\
           \end{array}  
           \right)
&\leq &  \frac{(n-k)^{k}}{ \sqrt{2 \pi k} \left( \frac{k}{e}\right)^{k}e^{\frac{1}{12k+1}}}\\
& =   & \frac{e^k}{\sqrt{2\pi k}\cdot e^{\frac{1}{12k + 1}}}\cdot \left(\frac{n}{k} -1 \right)^k,    
\end{eqnarray*} 
which finishes the proof of the lemma.
\end{proof}

\begin{Lema}\label{decrescente} 
For $n$ large enough and $0 <\alpha < \frac{\sqrt{2}-1}{2}$ we have: 
 $$\left( 
           \begin{array}{c}
           n-k \\
           k \\
           \end{array}  
           \right)
            < \left( 
           \begin{array}{c}
           n-(k+1) \\
           k + 1 \\
           \end{array}  
           \right) \mbox{for all } k\leq [\alpha n].$$ 
\end{Lema}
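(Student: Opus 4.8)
The plan is to prove the monotonicity of the binomial-type coefficient $\binom{n-k}{k}$ in $k$, for $k$ in the initial range $k \leq [\alpha n]$, by estimating the ratio of consecutive terms and showing it exceeds $1$. Concretely, I would write
\[
\frac{\binom{n-(k+1)}{k+1}}{\binom{n-k}{k}} = \frac{(n-k-1)!\,k!\,(n-2k)!}{(k+1)!\,(n-2k-2)!\,(n-k-1)!} = \frac{(n-2k)(n-2k-1)}{(k+1)(n-k-1)},
\]
so the claim $\binom{n-k}{k} < \binom{n-(k+1)}{k+1}$ is equivalent to the polynomial inequality
\[
(n-2k)(n-2k-1) > (k+1)(n-k-1).
\]

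Next I would verify this inequality under the hypothesis $k \leq [\alpha n]$ with $0 < \alpha < \frac{\sqrt{2}-1}{2}$. Substituting $k = \alpha n$ (the worst case, since one checks the inequality is hardest to satisfy when $k$ is as large as allowed) and expanding both sides in powers of $n$, the leading-order terms are $(1-2\alpha)^2 n^2$ on the left and $\alpha(1-\alpha) n^2$ on the right; the condition $(1-2\alpha)^2 > \alpha(1-\alpha)$, i.e. $5\alpha^2 - 5\alpha + 1 > 0$, holds precisely when $\alpha < \frac{5-\sqrt{5}}{10} = \frac{1}{2}\cdot\frac{5-\sqrt5}{5}$; and since $\frac{\sqrt2-1}{2} \approx 0.207 < \frac{5-\sqrt5}{10} \approx 0.276$, the bound on $\alpha$ in the statement comfortably guarantees a strict inequality between the leading coefficients. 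The lower-order terms (of order $n$ and the constants) are then absorbed for $n$ large enough, which is exactly why the statement only asserts the conclusion for $n$ sufficiently large.

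I would carry this out by treating $k$ as a real variable $t \in [0, \alpha n]$, defining $P(t,n) = (n-2t)(n-2t-1) - (t+1)(n-t-1)$, and showing $P(t,n) > 0$ on the whole interval by checking it is minimized either at an interior critical point or at the endpoint $t = \alpha n$ (a routine calculus argument, since $P$ is a quadratic in $t$ with positive leading coefficient $5t^2$, hence convex in $t$, so its minimum on an interval is at an endpoint — and at $t=0$ it is $n(n-1)-(n-1) = (n-1)^2 > 0$, while at $t = \alpha n$ the leading-coefficient computation above applies). Then $k = [\alpha n] \le \alpha n$ lies in this interval, giving $P(k,n) > 0$ and hence the desired strict inequality.

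The main obstacle is purely the bookkeeping of the lower-order terms: one must quantify "$n$ large enough" so that the positive gap between the quadratic leading coefficients, $\big((1-2\alpha)^2 - \alpha(1-\alpha)\big)n^2$, dominates the linear and constant corrections uniformly over $t \in [0,\alpha n]$. This is not deep, but it requires care because $\alpha$ is allowed to approach $\frac{\sqrt2-1}{2}$, where the coefficient gap shrinks (though it never vanishes on the open interval); the threshold for $n$ therefore depends on $\alpha$, which is harmless here since $\alpha$ is fixed before $n$.
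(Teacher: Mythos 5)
Your approach --- rewrite the claimed inequality as a statement about the ratio of consecutive coefficients, reduce to a quadratic inequality in $k$, and control the $n^{2}$-leading-order coefficients while absorbing lower-order terms for $n$ large --- is the same strategy the paper uses, and the substance is sound. Two corrections, though. First, the ratio of consecutive terms is
\[
\frac{\binom{n-(k+1)}{k+1}}{\binom{n-k}{k}} \;=\; \frac{(n-2k)(n-2k-1)}{(k+1)(n-k)},
\]
with $(n-k)$ (not $(n-k-1)$) in the denominator; your factorial cancellation has a slip, though it is invisible at leading order, so your threshold $5\alpha^{2}-5\alpha+1>0$, i.e.\ $\alpha<\frac{5-\sqrt5}{10}$, is unaffected, and your observation that $\frac{\sqrt2-1}{2}<\frac{5-\sqrt5}{10}$ (so the hypothesis gives slack that the paper never makes explicit) is correct. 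Second, the claim that a convex function attains its \emph{minimum} on an interval at an endpoint is wrong in general --- that is true of the maximum; a convex function can certainly dip in the interior (e.g.\ $t^{2}$ on $[-1,1]$). Your conclusion is rescued only because the vertex of $P(\cdot,n)$ sits at $t^{\ast}=\frac{5n-3}{10}\approx n/2$, which lies strictly to the right of $[0,\alpha n]$ for $\alpha<\tfrac12$; hence $P(\cdot,n)$ is decreasing on $[0,\alpha n]$ and its minimum genuinely is at $t=\alpha n$, the point your leading-order computation addresses. State that reason explicitly rather than the false "convex $\Rightarrow$ endpoint minimum," and the proof is complete.
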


\begin{proof}           
In fact, we have that the inequality 

\begin{eqnarray*}
& &\left( 
           \begin{array}{c}
           n-k \\
           k \\
           \end{array}  
           \right) = \frac{(n-k)!}{k!\cdot (n - 2k)!} < \frac{(n - (k+1))!}{(k+1)! (n - 2(k+1))!}=\\ &=&\left( 
           \begin{array}{c}
           n-(k+1) \\
           k + 1 \\
           \end{array}  
           \right)\\
 \end{eqnarray*}          
holds, if and only if:
       $$ \frac{(n-k) \cdot (k+1)}{(n- 2k)\cdot (n - 2k -1)} < 1. $$
If we have $n^2 -4nk - n + 4k^2 +2k > 0$, then the previous inequality is equivalent to the following:
      $$ -5k^{2}+ (5n -3)k + (- n^{2}+ 2n) < 0.$$  
Which is always true.
Then it is enough to verify conditions to give us $n^2 -4nk - n + 4k^2 +2k > 0$. 
If we make $k \leq [\alpha n]$ we have:
$$n^2 - 4kn \geq n^2 -4\alpha n^2 -n > 4\alpha^2n^2 + 2\alpha n \geq 4k^2+2k ,$$
where the first and the last inequalities always hold.To get the second inequality, it is enough to prove that:
$$n^2(-4\alpha - 4 \alpha ^2 +1) - n(1 -2\alpha) > 0.$$
Taking $0< \alpha < \frac{\sqrt{2}-1}{2}$, the coefficient of $n^2$ is positive, and the inequality is true for $n$ large enough.
\end{proof}

Let  $0< \alpha < \frac{\sqrt{2}-1}{2}$ and consider  $\alpha=\alpha(\gamma) = 1- \gamma$. Then 
\begin{eqnarray*}
\# I(\gamma,n) \leq 2 [\alpha n] 
        \left( 
           \begin{array}{c}
           n - [ \alpha n ]  \\
         { [ \alpha n ] }  \\
           \end{array}  
           \right).
\end{eqnarray*}
Applying the lemma \ref{cota fatorial} we obtain:
\begin{eqnarray*}
\# I(\gamma,n)    & \leq & \alpha n \cdot \frac{e^{\alpha n}}{\sqrt{2\pi \alpha n}\cdot e^{\frac{1}{12\alpha n + 1}}}\cdot \left(\frac{n}{\alpha n} -1 \right)^{\alpha n} \\
& \leq & \alpha n \cdot \frac{e^{\alpha n}}{\sqrt{2\pi \alpha n}\cdot e^{\frac{1}{12\alpha n + 1}}}\cdot \left[ \left(\frac{1 - \alpha}{\alpha   }\right)^{\alpha}\right]^{n}.  
\end{eqnarray*}
%Estudando o crescimento exponencial, temos : 
%\begin{eqnarray*}
%\frac{1}{n} \log (\# I(\gamma,n) ) \leq \frac{1}{n} \log \left( \alpha n \cdot \frac{e^{\alpha n}}{\sqrt{2\pi \alpha n}\cdot e^{\frac{1}{12\alpha n + 1}}}\cdot \left[ \left(\frac{1 - \alpha}{\alpha   }\right)^{\alpha}\right]^{n} \right).
%\end{eqnarray*} 

Note that, if we make $\gamma \rightarrow 1$, we have $\alpha \rightarrow 0$ and hence 
   $\left(\frac{1 - \alpha}{\alpha   }\right)^{\alpha}$ converges to one. Then we can establish the following proposition.

\begin{Prop}\label{cilindros ruins} 
If $\beta =\frac{ \log \omega}{2}$ then there exists $0< \gamma < 1$  which satisfies:

      $$ \# I(\gamma , n) \leq e^{\beta n} \quad \mbox{for} \ n \ \mbox{large enough}. $$
\end{Prop}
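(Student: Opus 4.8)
The plan is to make quantitative the heuristic already spelled out before the statement: we have the bound
\[
\# I(\gamma,n) \;\le\; \alpha n \cdot \frac{e^{\alpha n}}{\sqrt{2\pi \alpha n}\cdot e^{\frac{1}{12\alpha n + 1}}}\cdot \Bigl[\Bigl(\tfrac{1 - \alpha}{\alpha}\Bigr)^{\alpha}\Bigr]^{n},
\qquad \alpha = 1-\gamma,
\]
valid for $n$ large (using Lemma~\ref{decrescente}, which requires $\alpha < \tfrac{\sqrt2-1}{2}$, hence $\gamma$ close to $1$). Taking $n$-th roots, $\bigl(\# I(\gamma,n)\bigr)^{1/n}$ is asymptotically at most $e^{\alpha}\bigl(\tfrac{1-\alpha}{\alpha}\bigr)^{\alpha}$, because the polynomial factor $\alpha n/\sqrt{2\pi\alpha n}$ and the factor $e^{-1/(12\alpha n+1)}$ both tend to $1$ after extracting the root. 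So the exponential growth rate of $\# I(\gamma,n)$ is bounded by
\[
P(\alpha) \;:=\; \log\Bigl(e^{\alpha}\bigl(\tfrac{1-\alpha}{\alpha}\bigr)^{\alpha}\Bigr)\;=\;\alpha + \alpha\log(1-\alpha) - \alpha\log\alpha.
\]

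First I would record that $P(\alpha)\to 0$ as $\alpha\to 0^{+}$: indeed $\alpha\to 0$, $\alpha\log(1-\alpha)\to 0$, and $-\alpha\log\alpha\to 0$. Hence, given the target $\beta=\tfrac{\log\omega}{2}>0$, continuity lets me choose $\alpha_0>0$ small enough — and in particular $\alpha_0<\tfrac{\sqrt2-1}{2}$, so Lemma~\ref{decrescente} applies — with $P(\alpha_0) < \tfrac{\beta}{2}$, say. Setting $\gamma := 1-\alpha_0 \in (0,1)$, the displayed bound gives
\[
\#I(\gamma,n) \;\le\; C_n\, e^{P(\alpha_0)\,n},
\qquad C_n := \frac{\alpha_0 n}{\sqrt{2\pi\alpha_0 n}\,e^{1/(12\alpha_0 n+1)}},
\]
and since $C_n$ grows only sub-exponentially (in fact like $\sqrt n$), we have $C_n \le e^{(\beta/2)\,n}$ for all $n$ large. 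Multiplying, $\#I(\gamma,n)\le e^{P(\alpha_0)n}\,e^{(\beta/2)n}\le e^{\beta n}$ for $n$ large enough, which is exactly the assertion. One should also double-check that the passage $\#I(\gamma,n)\le 2[\alpha n]\binom{n-[\alpha n]}{[\alpha n]}$ from \eqref{cardinalidade} is legitimate: Lemma~\ref{decrescente} shows the summands $\binom{n-k}{k}$ are increasing in $k$ for $k\le[\alpha n]$, so each of the at most $[\alpha n]+1$ terms in \eqref{cardinalidade} is dominated by the last one, up to the factor $2$ already present; absorbing the harmless $+1$ into constants is routine.

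The only genuinely delicate point is making sure all the "for $n$ large enough" clauses can be met simultaneously with a single choice of $\gamma$: the constraint $\alpha_0<\tfrac{\sqrt2-1}{2}$ from Lemma~\ref{decrescente}, the smallness $P(\alpha_0)<\beta/2$ needed for the exponential rate, and then the threshold $n_0=n_0(\gamma)$ beyond which $C_n\le e^{(\beta/2)n}$ holds. These are not in tension — the first two are conditions on $\alpha_0$ alone and are compatible because $P(\alpha)\to0$, and once $\gamma$ is fixed the last is an honest statement about a fixed sub-exponential sequence — but the write-up should fix $\alpha_0$ first, then $\gamma$, then $n_0$, in that order, to avoid circularity. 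I expect the main obstacle, such as it is, to be purely bookkeeping: cleanly extracting the limit $\limsup_n (\#I(\gamma,n))^{1/n}\le e^{P(\alpha_0)}$ from the explicit bound and phrasing the conclusion uniformly, rather than any substantive analytic difficulty.
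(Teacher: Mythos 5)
Your proposal is correct and follows essentially the same route as the paper: both start from the bound $\#I(\gamma,n)\le 2[\alpha n]\binom{n-[\alpha n]}{[\alpha n]}$ via Lemma~\ref{decrescente} and Lemma~\ref{cota fatorial}, identify the exponential growth rate $\alpha+\alpha\log(1-\alpha)-\alpha\log\alpha$ (your $P(\alpha)$, the paper's $h(\alpha)$), and use $P(\alpha)\to0$ as $\alpha\to0^+$ to pick $\alpha_0$ small enough. The only cosmetic difference is that the paper fixes $\alpha_0$ with $h(\alpha_0)=\beta/2$ and invokes uniform convergence of $f(\alpha,n)$ to $h(\alpha)$, whereas you choose $\alpha_0$ with $P(\alpha_0)<\beta/2$ and absorb the sub-exponential prefactor directly — same argument, equivalent bookkeeping.
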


\begin{proof}
We consider
\begin{eqnarray*}
f\left( \alpha ,n\right)  &:=&\frac{1}{n}\ln \left( \alpha n\frac{e^{\alpha n}%
}{\sqrt{2\pi \alpha n}e^{\frac{1}{12\alpha n+1}}}\left( \frac{1-\alpha }{%
\alpha }\right) ^{\alpha n}\right) = \\
&=&\frac{1}{n}\left( \frac{\ln \alpha +\ln n}{2}-\frac{1}{12\alpha n+1}-%
\frac{\ln \left( 2\pi \right) }{2}\right) +\\
& +&\alpha +\alpha \ln \left(
1-\alpha \right) -\alpha \ln \alpha.  
 \end{eqnarray*}
Taking the limit in $n$ we have
$$h\left( \alpha
\right):=\lim_{n\rightarrow \infty }f\left( \alpha ,n\right)  =\alpha +\alpha \ln \left( 1-\alpha \right) -\alpha \ln \alpha
=\alpha +\alpha \ln \left( \frac{1-\alpha }{\alpha }\right).$$
Since $\displaystyle\lim_{\alpha \to 0} h(\alpha)= 0$  and $h(\frac{\sqrt{2}-1}{2}) > 0.48$, there exists $\alpha_{0} \in \big(0, \frac{\sqrt{2}-1}{2} \big)$  with $h(\alpha_{0}) = \frac{\log(\omega)}{4}$ since $\frac{\log \omega}{4} < 0.3$.
We have that $f(\alpha, n)$ converges to $h(\alpha)$ uniformly on any compact subset of $]0,1[$. Then, for $n$ sufficiently large, 
 $|f( \alpha_{0},n) - h(\alpha_{0})| < \frac{\log \omega}{4}.$ 

Thus $f(\alpha_{0},n)< \frac{ \log \omega}{2}$ and, therefore, there exists $\beta= \beta(\alpha_{0})$ satisfying the lemma. 
\end{proof}

Now we define a key tool in this paper: the concept of \emph{hyperbolic times}, which was introduced by Alves in \cite{Alves} and extensively studied in \cite{ABV} by Alves, Bonatti and Viana.

\begin{Def} 
We say that $n \in \mathbb{N}$ is a (G,c)-\emph{hyperbolic time}  for $x \! \in \! \Lambda$, for some $c>0$ if
$$\displaystyle\prod_{j=k}^{n-1} \left\| DG\left(G^{j}\left(x\right)\right) ^{-1}\right\|^{-1} \geq e^{2c \left(n-k\right)}, \quad    0 \leq k \leq n-1. $$
\end{Def}

%Vamos mostrar que em nosso caso temos muitos tempos hiperbólicos. Para isso, iremos utilizar como ferramenta essencial o Lema de Pliss, enunciado abaixo:  
% 
%\begin{Lema}{[Lema De Pliss]}
%
%Sejam $A\geq c_{2}> c_{1}, \quad \theta = \frac{c_{2} - c_{1}}{A- c_{1}}$ e  $a_{1}, \cdots , a_{n}$  tais que
%$$    a_{1} + \cdots + a_{n} \geq c_{2}n \quad \mbox{e} \quad a_{s}\geq A \quad \forall \ s=1, \cdots, n.$$
%Então existem inteiros $l \geq \theta n$ e  $1 \leq n_{1} < n_{2} < \cdots  <n_{l}\leq n$ satisfazendo:
%$$a_{k}+ \cdots + a_{n_{i}} \geq c_{1}\left(n_{i}- k \right), \quad  \forall \ \mbox{e} \ 0\leq k\leq n_{i}-1 \quad  \forall \ i=1, \cdots ,l. $$
%\end{Lema}
%Para ver uma prova deste lema consulte \cite{ABV} ou \cite{Mane}.    
%A seguinte proposição  mostra a existência de pontos que possuem muitos tempos hiperbólicos. Esses pontos pertencem a ''conjuntos bons'' que denominaremos cilindros hiperbólicos. Formalmente, diremos que um cilindro $\mathcal{R}^n$ de tamanho $n$ é um cilindro hiperbólico quando para todo $x \in \mathcal{R}^n $ temos que $n$ é tempo hiperbólico de $x$. 

If $\gamma$ satisfies
\begin{equation}\label{condicao sobre gamma}
   \sigma ^{ - \left(1 - \gamma \right)} \cdot \left(\frac{1}{e}\right)^{\gamma} > 1 ,      
\end{equation}
there is $c>0$  such that
\begin{equation}\label{condicao sobre c}
   \sigma ^{ - \left(1 - \gamma \right)} \cdot \left(\frac{1}{e}\right)^{\gamma} > e^{ 4 c} > 1 .      
\end{equation}

For that it is enough to consider $ 0< \gamma < \frac{\log(\sigma^{-1})}{\log(\sigma^{-1} + 1)}$ .

From now on we will fix $c>0$ as in equation~\ref{condicao sobre c} and we will omit  $c$ and  $G$ in the notation, referring  to a hyperbolic time, instead of $(G,c)$-hyperbolic time.

In our setting we can prove that good admissible words are associated to cylinder sets with positive proportion of hyperbolic times. For this, we use the well known Pliss Lemma (see \cite{ABV}).

\begin{Prop}\label{temp hiperbolicos}
There exists $\theta >0 $ such that  for every $ n \geq 1$ and every admissible word $\left(a_{0}, \cdots, a_{n-1}\right) \notin I\left(\gamma, n\right)$ there is $l> \theta n $ and numbers $1 \leq n_{1} < \cdots < n_{l}\leq n$ such that  $R^{n_{j}}\left[i_{0},\cdots , i_{n_{j}-1}\right]$ is hyperbolic for for all $ j=1, \cdots, l$.
\end{Prop}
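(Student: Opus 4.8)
The plan is to convert the combinatorial count of Proposition~\ref{cilindros ruins} into a lower bound for the Birkhoff-type sum of $\log\|DG(\cdot)^{-1}\|^{-1}$ along any orbit that follows a good word, and then feed this bound into the Pliss Lemma. First I would examine $DG$ on each rectangle. On $R_2$ the map dilates the $x$-coordinate by $\alpha=\rho^{-1}>3$ and the $y$-coordinate by $|g_1'|=\sigma^{-1}>3$, so $\|DG(p)^{-1}\|^{-1}\ge\sigma^{-1}>1$ for $p\in R_2$. On $R_1$ and $R_3$ the map is $(x,y,z)\mapsto(\alpha x,g_0(y),\cdot)$ with $g_0=f^{-1}$; from the explicit formula for $f$ one checks that $f'$ decreases monotonically from $f'(0)=e$ to $f'(1)=e^{-1}$, hence $g_0'=1/(f'\circ f^{-1})$ takes all its values in $[e^{-1},e]$, the minimum being approached near $y=0$, which is precisely the non-hyperbolic part of the system. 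Since $e^{-1}<e<3<\alpha$, this gives $\|DG(p)^{-1}\|^{-1}=g_0'(y)\ge e^{-1}$ for $p\in R_1\cup R_3$. In particular all these conorms are bounded above by a constant $M$ (one may take $M=\max\{\rho^{-1},\sigma^{-1},e\}$), and the displayed lower bounds depend only on which rectangle $p$ belongs to.

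Next, fix an admissible word $(a_0,\dots,a_{n-1})\notin I(\gamma,n)$ and any $x\in\mathcal{R}^{n}(a_0,\dots,a_{n-1})$, and set $u_j:=\log\|DG(G^{j}(x))^{-1}\|^{-1}$. By the definition of $I(\gamma,n)$ at most $\gamma n$ of the symbols $a_j$ lie in $\{1,3\}$, so at least $(1-\gamma)n$ of them equal $2$. Using the previous step,
$$\sum_{j=0}^{n-1}u_j\ \ge\ \#\{j:a_j=2\}\cdot\log\sigma^{-1}\ +\ \#\{j:a_j\in\{1,3\}\}\cdot(-1),$$
and since the right-hand side is increasing in $\#\{j:a_j=2\}$ (because $\log\sigma^{-1}>0>-1$) while $\#\{j:a_j=2\}\ge(1-\gamma)n$, it is bounded below by $n\big((1-\gamma)\log\sigma^{-1}-\gamma\big)$. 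By the choice of $\gamma$ and $c$ in \ref{condicao sobre c}, $(1-\gamma)\log\sigma^{-1}-\gamma>4c$, so $\sum_{j=0}^{n-1}u_j>4cn$. The crucial point is that this lower bound, obtained by replacing the true conorms by their symbol-dependent minorants, holds for every $x$ in the cylinder simultaneously.

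Finally I would apply the Pliss Lemma to $u_0,\dots,u_{n-1}$ with upper bound $\log M$ and average exceeding $4c>2c$: it produces $\theta:=\dfrac{4c-2c}{\log M-2c}>0$, independent of $n$ (note $4c<\log\sigma^{-1}\le\log M$, so $\theta>0$), together with indices $1\le n_1<\dots<n_l\le n$, $l>\theta n$, such that $\sum_{j=k}^{n_i-1}u_j\ge 2c(n_i-k)$ for all $0\le k<n_i$ and all $i$. Exponentiating, this says exactly that each $n_i$ is a $(G,c)$-hyperbolic time for $x$; and because $u_j$ was bounded from below by quantities depending only on $a_j$, the very same indices work for every $x\in\mathcal{R}^{n}(a_0,\dots,a_{n-1})$, so $\mathcal{R}^{n_i}(a_0,\dots,a_{n_i-1})$ is hyperbolic for each $i$. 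I expect the main obstacle to be the first step — verifying rigorously that $g_0'=(f^{-1})'$ stays above $e^{-1}$ on all of $[0,1]$ and that this worst rate is localized exactly on the cylinders controlled by Proposition~\ref{cilindros ruins} — together with the routine but delicate bookkeeping needed to make the constant coming out of \ref{condicao sobre c} match the threshold $2c$ required by the Pliss Lemma; the combinatorics is already supplied by Proposition~\ref{cilindros ruins} and the Pliss Lemma is standard, so no further difficulty is anticipated there.
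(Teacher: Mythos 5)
Your argument is correct and follows essentially the same path as the paper: bound $\log\|DG(\cdot)^{-1}\|^{-1}$ from below on each rectangle (by $-1$ on $R_1,R_3$ and by $\log\sigma^{-1}$ on $R_2$), use the fact that a word outside $I(\gamma,n)$ has at least $(1-\gamma)n$ symbols $2$ to get the Birkhoff-type sum above $4cn$ via condition~\eqref{condicao sobre c}, and then invoke the Pliss Lemma with $c_1=2c$, $c_2=4c$. The one imprecision is that you apply Pliss to the true conorms $u_j(x)$ of a specific point and then assert the resulting indices ``work for every $x$''; to get indices uniform over the whole cylinder you should apply Pliss directly to the symbol-dependent infima $b_{a_j}$, so that $\sum_{j=k}^{n_i-1}u_j(x)\ge\sum_{j=k}^{n_i-1}b_{a_j}\ge 2c(n_i-k)$ for every $x$ in $\mathcal{R}^n(a_0,\dots,a_{n-1})$ — which is exactly the paper's bookkeeping.
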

\begin{proof}
Given  $ i= 1,2,3$, consider 
    $ b_{i} = inf\left\{\log \left\|DG\left(x\right)^{-1}\right\|^{-1}| x \in R_{i}\right\}$.
Since we have in each rectangle the following: 
\begin{eqnarray*}
 \left\|DG\left(x\right)^{-1}\right\| & = & \sigma \quad \mbox{if} \  x \in R_{2} \\
 \left\|DG\left(x\right)^{-1}\right\| & \leq & e  \quad  \mbox{if} \  x \in R_{1} \ \mbox{ou} \  R _{3} ,   
\end{eqnarray*}  
we can estimate $b_i$ by
$$
b_{i} \geq \left\{
\begin{array}{rcl}
\log \left(\frac{1}{e}\right)= -1 & \mbox{if} & i=1,3\\
\log\left(\sigma^{-1}\right)      & \mbox{if} & i=2 . 
\end{array}
\right.
$$
Defining  $a_{t} = b_{i_{t}}$ where $t=0, \cdots, n-1$, and since  $\left(i_{0}, \cdots, i_{n-1}\right)$ belongs to the complement of $I\left(\gamma, n\right)$, we have
\begin{eqnarray*}
a _{1}+ \cdots + a_{n-1} & = &\displaystyle\sum_{t=0}^{n-1} b_{i_{t}} \geq \left(1 - \gamma \right)n \log \left(\sigma^{-1}\right) + \gamma n \left(-1\right)\\
                                      & > & 4cn
\end{eqnarray*}
where the last inequality follows from the choice of $c$ (see equation~\ref{condicao sobre c}).

Let $ A = \sup _{x \in \Lambda}\left\{ \log \left\|DG^{-1}\left(x\right)\right\|^{-1}\right\}$. Then $a_{t}\leq A$ for every $t= 0, \cdots , n-1$.
We consider $c_{1}= 2c$ and $c_{2}= 4c$  and then by the Pliss Lemma, there exist $l> \theta n$ with $\theta = \frac{2c}{A - 2c}$ and $l$ integer numbers  $1 \leq n_{1} < \cdots < n_{l} \leq n$  for which the folllowing inequality holds:
$$ a_{k} + \cdots + a_{n_{i}} \geq 2c \left( n_{i}- k \right) \ \ \forall  \ 0\leq k \leq n_{i} -1  \ \ i= 1, \cdots, l. $$
Therefore for each $x \in R ^{n_{i}}$  we have:
\begin{eqnarray*}
\log \left(\displaystyle\prod_{j=k}^{n_{i}-1}\left\|DG\left(G^{j}\left(x\right)^{-1}\right)\right\|^{-1} \right) & = & \displaystyle\sum_{j=k}^{n_{i}-1} \log\left\|DG\left(G^{j}\left(x\right)^{-1}\right)\right\|^{-1} \\
& \geq & \displaystyle\sum_{j=k}^{n_{i}-1} a_{j} \geq 2c \left(n_{i}-k\right).
\end{eqnarray*}
Thus, $n_i$ is a hyperbolic time for $x$ if $x \in R^{n_i}$.
\end{proof}

\begin{Remark}
\label{epsilon} An useful property of hyperbolic times was given in \cite{ABV}. Roughly they proved that, if $n$ is a hyperbolic time for $x$ there exists a neighborhood of  the $n$-th iterate of  $x$ for which one every inverse branch is exponentially contracting. Moreover, the radius $\epsilon > 0$ of the neighborhood depends only on $c$ and on the map.  
\end{Remark}
We will use this property to prove a type of distortion condition for the Birkhoff sums. For the precise statement and the proof see \cite{ABV}.

% Denotaremos por $G_{x,n}^{-j}$ o ramo inverso de $G^{-j}$ que envia $G^{n}(x)$ em $G^{n-j}(x)$ e $B(x, \delta)= \{y \in \Lambda | d(x,y) \leq \delta \}$ a bola fechada de centro $x$ e raio $\delta$.
% 
%\begin{Prop}[ABV]\label{tempos hip}
%Existe $\delta= \delta(G,c) > 0$ (que depende de G e de c) para o qual dado qualquer tempo hiperbólico $n \geq 1$ de um ponto $x \in \Lambda$ e dado $1\leq j\leq n$, o ramo inverso $G_{x,n}^{-j}$ está definido em $B(G^{n}(x), \delta)$ e sastisfaz 
%        $$\left\| DG_{x,n}^{-j}(z)  \right\| \leq e ^{-jc}    \qquad \forall z \in  B(G^{n}(x), \delta).   $$
%\end{Prop}

\begin{Prop}\label{distorcao}
There exists a constant $K_0 > 0$ such that if $x,y$  belong to a hyperbolic cylinder $R^{n}$, the following holds:
$$|S_{n}\phi (x) - S_{n}\phi(y)| \leq K d(x,y)^{\delta} \leq K_{0}  . $$ 
\end{Prop}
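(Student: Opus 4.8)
The plan is to bound the Birkhoff-sum difference by the sum of pointwise differences $|\phi(G^j(x))-\phi(G^j(y))|$ and to control the geometry of the iterated cylinder using the hyperbolicity of $R^n$ together with the ``backward contraction'' property recalled in Remark~\ref{epsilon}. Since $\phi$ is H\"older continuous, there are constants $C>0$ and $\delta\in(0,1]$ with $|\phi(p)-\phi(q)|\le C\,d(p,q)^{\delta}$ for all $p,q\in\Lambda$. Hence
\[
|S_n\phi(x)-S_n\phi(y)|\le\sum_{j=0}^{n-1}|\phi(G^j(x))-\phi(G^j(y))|\le C\sum_{j=0}^{n-1}d\bigl(G^j(x),G^j(y)\bigr)^{\delta},
\]
so everything reduces to an estimate on $\sum_{j=0}^{n-1}d(G^j(x),G^j(y))^{\delta}$.

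First I would fix the scale. By Remark~\ref{epsilon}, for a $(G,c)$-hyperbolic time $n$ there is $\epsilon>0$, depending only on $c$ and $G$, such that on the $\epsilon$-ball around $G^n(x)$ every inverse branch of $G^n$ is defined and contracts: concretely, if $x$ and $y$ lie in the same hyperbolic cylinder $R^n$ then $G^n(x)$ and $G^n(y)$ lie in such a ball (the cylinder $R^n$ has been chosen so that its $G^n$-image has diameter at most $\epsilon$, shrinking $\epsilon$ if necessary), and the hyperbolic-time inequality $\prod_{i=k}^{n-1}\|DG(G^i(x))^{-1}\|^{-1}\ge e^{2c(n-k)}$ yields
\[
d\bigl(G^{k}(x),G^{k}(y)\bigr)\le e^{-c(n-k)}\,d\bigl(G^{n}(x),G^{n}(y)\bigr)\le e^{-c(n-k)}\,\mathrm{diam}(G^n R^n),
\]
for every $0\le k\le n$, by pulling the $\epsilon$-ball back along the contracting inverse branch step by step (the standard argument in \cite{ABV}; the uniform bound $\|DG(x)^{-1}\|\le e$ on $R_1,R_3$ and $=\sigma$ on $R_2$ makes the inverse branches well-behaved). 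Substituting $j=k$ and using $d(G^nx,G^ny)\le\mathrm{diam}(\Lambda)$,
\[
\sum_{j=0}^{n-1}d\bigl(G^j(x),G^j(y)\bigr)^{\delta}\le \mathrm{diam}(\Lambda)^{\delta}\sum_{j=0}^{n-1}e^{-c\delta(n-j)}\le \frac{\mathrm{diam}(\Lambda)^{\delta}}{1-e^{-c\delta}}=:K_0.
\]
This gives the right-hand inequality $|S_n\phi(x)-S_n\phi(y)|\le C\,K_0$; absorbing the constant $C$ into $K_0$ (or renaming), and setting $K:=C$ on the initial term $d(x,y)^{\delta}$ for the middle bound, one obtains the displayed chain of inequalities.

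The main obstacle I expect is the geometric input rather than the summation: one must be sure that a hyperbolic cylinder $R^n$ really does map under $G^n$ into a single $\epsilon$-neighbourhood to which the Remark applies, and that the inverse branches used to pull back are exactly the ones realizing the hyperbolic-time contraction. Because $\Lambda$ contains genuine line segments and the degree of $G$ is not constant, a little care is needed: the relevant distances are the ``vertical'' (central) ones along which $G$ expands, and the cylinder has been cut finely enough in that direction by construction, so the image $G^nR^n$ has controlled diameter. Once that is in place the contraction estimate and the geometric-series bound are routine, and the constant $K_0$ depends only on $c$, $G$, and the H\"older data of $\phi$, as claimed.
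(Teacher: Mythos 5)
Your proposal is correct and follows essentially the same route as the paper: decompose the Birkhoff-sum difference via the H\"older modulus, invoke the exponential backward contraction of Remark~\ref{epsilon} on hyperbolic cylinders, and sum the resulting geometric series to obtain a uniform constant $K_0$ depending only on $C$, $\delta$ and $c$. The only difference is that you spell out the pull-back contraction estimate and the geometric-series bound in more detail than the paper, which states the contraction $d(G^{n-j}x,G^{n-j}y)^\delta\le e^{-jc}$ directly and leaves the summation implicit.
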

\begin{proof}
Assume $\phi$ is H\"older continuous with constants  $(C, \delta)$. Then we have:
\begin{eqnarray*}
   |S_{n}\phi(x) - S_{n}\phi(y)| &  =   & |\displaystyle\sum_{j=1}^{n}\phi(G^{n-j}(x))- \phi(G^{n-j}(y))|  \\
                                 & \leq & \displaystyle\sum_{j=1}^{n}|\phi(G^{n-j}(x))-\phi(G^{n-j}(y))| \\       
                                 & \leq & C\displaystyle\sum_{j=1}^{n} d\big( G^{n-j}(x), G^{n-j}(y)\big)^{\delta}.
\end{eqnarray*}
Also, since $x$ and $y$ belong to the same hyperbolic cylinder, according to the Remark~\ref{epsilon} for each $1 \leq j \leq n$ we have:
             $$d\big( G^{n-j}(x), G^{n-j}(y)\big)^{\delta} \leq e^{-jc}.$$
Then, there is a constant $K_0>0$ which depends only on $\delta, c$ and $C$ which satisfies:
        $$|S_{n}\phi (x) - S_{n}\phi(y)| \leq K_{0}  . $$
\end{proof}

%%%%%%%%%%%%%%%%%%%%%%%%%%%%%%%%%%%%%%%%%%%%%%%%%%%%%%%%%%%%%%%%%%%%%%%%%%%%%%%%%%%%%%%%%%%%%%%%%%%%%%%%%%%%%%%%
%%%%%%%%%%%%%%%%%%%%%%%%%%%%%%%%%%%%%%%%%%%%%%%%%%%%%%%%%%%%%%%%%%%%%%%%%%%%%%%%%%%%%%%%%%%%%%%%%%%%%%%%%%%%%%%%
%%%%%%%%%%%%%%%%%%%%%%%%%%%%   JACOBIAN                        %%%%%%%%%%%%%%%%%%%%%%%%%%%%%%%%%%%%%%%%%%%%%%%%
%%%%%%%%%%%%%%%%%%%%%%%%%%%%                                    %%%%%%%%%%%%%%%%%%%%%%%%%%%%%%%%%%%%%%%%%%%%%%%%
%%%%%%%%%%%%%%%%%%%%%%%%%%%%%%%%%%%%%%%%%%%%%%%%%%%%%%%%%%%%%%%%%%%%%%%%%%%%%%%%%%%%%%%%%%%%%%%%%%%%%%%%%%%%%%%%
%%%%%%%%%%%%%%%%%%%%%%%%%%%%%%%%%%%%%%%%%%%%%%%%%%%%%%%%%%%%%%%%%%%%%%%%%%%%%%%%%%%%%%%%%%%%%%%%%%%%%%%%%%%%%%%%

\subsection{ Jacobians and conformal measures}

\label{s. jacobiano}

\begin{Def} \label{def jacobiano}
 Let $\eta$  be a probability measure on $\Lambda$. We say that a function $J_{\eta}G$ is a \emph{Jacobian} of $\eta$ with respect to $G$ if it satisfies:
$$ \eta \left(G\left(A\right)\right) = \int_{A} J_{\eta}G \, d \eta,   $$ 
for all mensurable set $ A $ such that $ G|_{A} $ is injective.
\end{Def}

In other words, given the measure $\eta$,  we consider $G_{\ast}\eta$ by:
$$G_{\ast}\eta (B) = \eta \big(G(B)\big) \ \mbox{for all  measurable subset} \,  B \,\mbox{of} \ \Lambda .$$
The Radon-Nikodym theorem ensures that the Jacobian is essentially unique. Further, it coincides with the Radon-Nikodym derivative of the measure $G_{\ast}\eta$ with respect to $\eta$, $J_{\eta}G = \frac{dG_{\ast}\eta}{d\eta}$. We will get an useful property regarding the Jacobian with respect to an iterated of $G$.

\begin{Prop} \label{regra da cadeia}
If the Jacobian of $\mu$ with respect to $G$, $J_{\mu}G$, exists, then for each $n \! \in \! \mathbb{N}$ the map $G^n$ also has a Jacobian and it is given by:
          $$J_{\mu}G^{n} = \displaystyle\prod_{j=0}^{n-1} J_{\mu}G \circ G^{j} .$$    
\end{Prop}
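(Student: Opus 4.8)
The plan is to prove the chain rule for Jacobians by induction on $n$, using the defining property of the Jacobian (Definition~\ref{def jacobiano}) together with the fact that $G$ is piecewise injective on the rectangles $R_i$.

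\smallskip

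First I would set up the induction. For $n=1$ the statement is the definition of $J_\mu G$. Assume the result holds for $n$, so that $J_\mu G^n = \prod_{j=0}^{n-1} J_\mu G \circ G^j$ is a Jacobian for $G^n$. To pass to $n+1$, fix a measurable set $A$ on which $G^{n+1}$ is injective. Then in particular $G$ is injective on $A$ and $G^n$ is injective on $G(A)$, so both $J_\mu G$ and $J_\mu G^n$ may be applied to the appropriate sets. The computation is
\begin{align*}
\mu\bigl(G^{n+1}(A)\bigr) &= \mu\bigl(G^n(G(A))\bigr) = \int_{G(A)} J_\mu G^n \, d\mu \\
&= \int_{G(A)} \bigl(J_\mu G^n\bigr)\, d\mu = \int_A \bigl(J_\mu G^n \circ G\bigr)\cdot J_\mu G \, d\mu,
\end{align*}
where the last equality is the change-of-variables / transfer formula $\int_{G(A)} \psi\, d\mu = \int_A (\psi\circ G)\, J_\mu G\, d\mu$ applied with $\psi = J_\mu G^n$ on the set $A$ where $G$ is injective. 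Substituting the inductive expression for $J_\mu G^n$ and reindexing gives
$$\bigl(J_\mu G^n \circ G\bigr)\cdot J_\mu G = \Bigl(\prod_{j=0}^{n-1} J_\mu G \circ G^{j}\Bigr)\circ G \;\cdot\; J_\mu G = \prod_{j=1}^{n} \bigl(J_\mu G \circ G^{j}\bigr)\cdot J_\mu G = \prod_{j=0}^{n} J_\mu G \circ G^{j},$$
so $\prod_{j=0}^{n} J_\mu G \circ G^j$ satisfies the defining identity for $G^{n+1}$, and by essential uniqueness of the Jacobian (Radon--Nikodym) it equals $J_\mu G^{n+1}$.

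\smallskip

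The one step that needs care — and which I expect to be the main (minor) obstacle — is justifying the transfer formula $\int_{G(A)} \psi\, d\mu = \int_A (\psi\circ G)\, J_\mu G\, d\mu$ for measurable $\psi$ on sets where $G$ is injective. I would first establish it for $\psi = \mathbf{1}_B$ with $B \subseteq G(A)$ measurable: writing $B = G(A')$ with $A' = A \cap G^{-1}(B)$ (using injectivity of $G$ on $A$), one has $\mu(B) = \mu(G(A')) = \int_{A'} J_\mu G\, d\mu = \int_A \mathbf{1}_{A'} J_\mu G\, d\mu = \int_A (\mathbf{1}_B\circ G) J_\mu G\, d\mu$. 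The general case then follows by the standard argument: linearity extends it to simple functions, and monotone convergence extends it to all nonnegative measurable $\psi$, which suffices since Jacobians are nonnegative. Once this lemma is in place the induction above closes and the proposition follows.
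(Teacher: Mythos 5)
Your proof is correct and follows essentially the same route as the paper's: establish a change-of-variables (transfer) formula $\int_{G(A)} \psi\, d\mu = \int_A (\psi\circ G)\, J_\mu G\, d\mu$ on sets of injectivity, then compose it with itself to pass from $J_\mu G^n$ to $J_\mu G^{n+1}$. The one difference worth noting is that the paper sidesteps the justification of the transfer formula by asserting (without proof) an equivalent reformulation of the Jacobian in terms of open sets $U$ and continuous $\psi$ supported in $G(U)$, and then works with that; you instead stay with the measure-theoretic Definition~\ref{def jacobiano} and derive the transfer formula directly, starting from indicator functions and extending by linearity and monotone convergence. Your version is a little more self-contained on that point, but the underlying argument is the same.
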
   

\begin{proof}
For each function $\psi: \Lambda \to \mathbb{R}$ we define the support of  $\psi$ by  $supp(\psi)=\{x \in \Lambda \mid \psi(x)\neq 0 \}$.

For topological spaces, one can establish another definition of Jacobian which is equivalent to the one given in the definition~\ref{def jacobiano}.
We say that  $J_{\mu}G$ is a Jacobian of $G$ with respect to $\mu$, if for any open set $U$ wich satisfying that $G|_{U}$ is  injective, and for all  $\psi \in \mathcal{C}(\Lambda)$ such that $supp(\psi)\subset G(U)$, the following holds:
              $$\displaystyle\int_{G(U)} \psi \ d\mu = \displaystyle\int_{U} (\psi \circ G)J\mu G \ d \mu.$$ 

Using this definition we can prove the proposition. We treat the case $n=2$ since the general case follows by induction.
Given an open set $U$ suppose that $G|_{U}$ is injective and $\psi$ is such that $supp(\psi)\subset G^2(U)$. We have  to show that $J_{\mu}G^{2}= J_{\mu}G \cdot J_{\mu}G \circ G$. In fact, we have:
\begin{eqnarray*}
 \displaystyle\int_{G^{2}(U)} \psi \ d \mu &=&  \displaystyle\int_{G(U)}( \psi \circ G) J_{\mu}G  \ d \mu \\
                                           &=&  \displaystyle\int_{U}\big[ \big[( \psi \circ G )\cdot J_{\mu}G\big]\circ G  \big]\cdot J_{\mu}G \ d \mu \\  
                                           &=&  \displaystyle\int_{U} \big[ ( \psi \circ G ^{2}) \cdot \big( J_{\mu}G\circ G \big)\big] \cdot  J_{\mu}G \ d \mu.   
\end{eqnarray*} 
\end{proof}
In the next lemma we compute the Jacobian of the dual operator eigenmeasure .
\begin{Prop} \label{jacobiano}
If the spectral radio of $\mathcal{L}_{\phi}$,  $\lambda = r \left(\mathcal{L}_{\phi} \right) $, is an eigenvalue associated with an eigenmeasure $\nu$ then we have:
      $$ J_{\nu}G = \lambda e ^{- \phi}.$$ 
(In this case, $\nu$ is said to be a conformal measure.)
\end{Prop}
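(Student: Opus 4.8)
The plan is to use the characterization of the Jacobian via the duality relation $\int \psi\, d\mathcal{L}_\phi^{\ast}\nu = \int \mathcal{L}_\phi(\psi)\, d\nu$ together with the topological (change-of-variables) formulation of the Jacobian recalled in the proof of Proposition~\ref{regra da cadeia}. Recall that $J_\nu G$ is characterized by the identity
$$\int_{G(U)} \psi\, d\nu = \int_U (\psi\circ G)\, J_\nu G\, d\nu$$
for every open set $U$ on which $G$ is injective and every continuous $\psi$ with $\mathrm{supp}(\psi)\subset G(U)$. So the goal is to verify that $J_\nu G = \lambda e^{-\phi}$ satisfies this identity.

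The key computation proceeds as follows. Fix such a $U$ and such a $\psi$. Since $\mathcal{L}_\phi^{\ast}\nu = \lambda\nu$, we have $\int \mathcal{L}_\phi(\varphi)\, d\nu = \lambda \int \varphi\, d\nu$ for every $\varphi\in\mathcal{C}(\Lambda)$. Apply this with $\varphi = (\psi\circ G)\cdot e^{-\phi}\cdot \mathbf{1}_U$; more precisely, to stay inside continuous functions one approximates $\mathbf{1}_U$ from inside by continuous bump functions supported in $U$ and equal to $1$ on compact subsets exhausting $U$, and passes to the limit using dominated convergence. For the function $\varphi = (\psi\circ G)e^{-\phi}$ restricted to $U$, one computes
$$\mathcal{L}_\phi\varphi(x) = \sum_{y\in G^{-1}(x)} e^{\phi(y)}\varphi(y) = \sum_{\substack{y\in G^{-1}(x)\\ y\in U}} e^{\phi(y)}\, e^{-\phi(y)}\,\psi(G(y)) = \sum_{\substack{y\in G^{-1}(x)\\ y\in U}} \psi(x).$$
Because $G|_U$ is injective, for each $x$ there is at most one preimage $y\in U$, and such a $y$ exists precisely when $x\in G(U)$; moreover $\mathrm{supp}(\psi)\subset G(U)$ forces $\psi(x)=0$ whenever $x\notin G(U)$. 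Hence $\mathcal{L}_\phi\varphi(x) = \psi(x)$ for all $x$. Integrating and using the eigenmeasure equation gives
$$\lambda\int_U (\psi\circ G)e^{-\phi}\, d\nu = \lambda\int \varphi\, d\nu = \int \mathcal{L}_\phi\varphi\, d\nu = \int \psi\, d\nu = \int_{G(U)}\psi\, d\nu,$$
that is, $\int_{G(U)}\psi\, d\nu = \int_U (\psi\circ G)\,(\lambda e^{-\phi})\, d\nu$, which by the characterization above identifies $J_\nu G = \lambda e^{-\phi}$.

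The main technical obstacle is the approximation step needed to pass from the continuous function $(\psi\circ G)e^{-\phi}$ on $\Lambda$ to its "restriction to $U$": since $\mathbf{1}_U$ is not continuous, one must either work with a sequence of continuous cutoff functions and control the transfer operator's behaviour on the boundary $\partial U$, or appeal to the measure-theoretic version of the Jacobian (Definition~\ref{def jacobiano}) and a monotone-class argument to reduce the identity $\nu(G(A)) = \int_A \lambda e^{-\phi}\, d\nu$ to the open-set case. One should also remark that $\phi$ is continuous (indeed H\"older), so $e^{-\phi}$ is bounded away from $0$ and $\infty$ on the compact space $\Lambda$, which makes all the integrals finite and the dominated convergence applicable; and that the computation is genuinely local, using only that a point of $G(U)$ has exactly one $G$-preimage inside $U$, so the non-constant degree of $G$ (points in $R_1$, $R_2$ have two preimages, those in $R_3$ only one) causes no difficulty here.
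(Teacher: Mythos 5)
Your proof is correct and follows essentially the same route as the paper: apply the eigenmeasure identity $\mathcal{L}_\phi^{\ast}\nu = \lambda\nu$ to the function $e^{-\phi}$ localized on the injectivity set, and observe that $\mathcal{L}_\phi$ applied to this cancels $e^{\phi}\cdot e^{-\phi}$ and, by injectivity, leaves the characteristic (or test) function of the image. The only difference is that the paper plugs the indicator $\chi_A$ directly into the dual-operator relation (tacitly extending it from continuous to bounded measurable functions), whereas you invoke the topological characterization with continuous test functions and spell out the cutoff approximation, which is slightly more careful on a point the paper leaves implicit.
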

\begin{proof}
Given $A$ which satisfies $G_{\mid A}$ injective, we have to show that   $\nu (G(A)) = \int_{A} \lambda e^{- \phi} d \nu$.
We have:
\begin{eqnarray*}
\int_{A} \lambda e^{- \phi} \  d \nu & = & \int e^{- \phi} \cdot \chi _{A}  \  d \lambda\nu \\
                                  & = &  \int e^{- \phi} \cdot \chi _{A} \  d \mathcal{L}_{\phi}^{\ast} \nu \\
                                  & = & \int \mathcal{L}_{\phi}\left ( e^{- \phi} \cdot \chi _{A}\right) \  d\nu.    
\end{eqnarray*}
By the injectivity of $G$ in $A$ we get:
\begin{eqnarray*}
\mathcal{L}_{\phi}\left ( e^{- \phi} \cdot \chi _{A}\right)\left( x \right) & = &
 \displaystyle\sum_{y \in G^{-1}\left( x \right)} e^{\phi \left( y \right)} \cdot  e^{-\phi \left( y \right)} \cdot \chi _{A}\left( y \right)  \\ & = & \displaystyle\sum_{y \in G^{-1}\left( x \right)} \chi _{A} \left( y \right)  =  \chi _{G \left( A \right)} \left( x \right)
\end{eqnarray*}
$$ \int_{A} \lambda e^{- \phi} d \nu = \int \chi _{G \left( A \right)}\left( x \right) d\nu = \nu \left( G \left( A \right) \right), $$
wich finishes the proof.
\end{proof}

Thus, combining Proposition~\ref{jacobiano} and Proposition~\ref{regra da cadeia} we obtain:
\begin{Lema} \label{quase Gibbs}
There exists $K_{1} > 0 $ satisfying:
$$ K^{-1}_{1} \leq  \frac{J_{\nu}G^{n}\left( x \right)}{J_{\nu}G^{n}\left( y \right)} \leq K_{1},       $$
for all $x,y \in R ^{n}$ , where $R^n$ is a hyperbolic cylinder.
\end{Lema}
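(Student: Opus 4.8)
The plan is to reduce the statement to the bounded--distortion estimate for Birkhoff sums already obtained in Proposition~\ref{distorcao}. First I would compute $J_{\nu}G^{n}$ explicitly. By Proposition~\ref{jacobiano} the eigenmeasure $\nu$ is conformal, with $J_{\nu}G = \lambda e^{-\phi}$, and by the chain rule for Jacobians (Proposition~\ref{regra da cadeia}) one gets, for every $n\in\mathbb{N}$,
$$J_{\nu}G^{n} = \prod_{j=0}^{n-1} (J_{\nu}G)\circ G^{j} = \prod_{j=0}^{n-1} \lambda\, e^{-\phi\circ G^{j}} = \lambda^{n}\, e^{-S_{n}\phi}.$$

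Next, for $x,y$ in the same hyperbolic cylinder $R^{n}$, the factors $\lambda^{n}$ cancel in the quotient, so that
$$\frac{J_{\nu}G^{n}(x)}{J_{\nu}G^{n}(y)} = e^{\,S_{n}\phi(y)-S_{n}\phi(x)} = e^{-\left(S_{n}\phi(x)-S_{n}\phi(y)\right)}.$$
Now I would apply Proposition~\ref{distorcao}: since $x,y$ lie in a hyperbolic cylinder, $|S_{n}\phi(x)-S_{n}\phi(y)|\le K_{0}$, whence
$$e^{-K_{0}} \le \frac{J_{\nu}G^{n}(x)}{J_{\nu}G^{n}(y)} \le e^{K_{0}}.$$
Taking $K_{1} = e^{K_{0}}$ finishes the proof.

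There is essentially no remaining obstacle at this stage: the substantive content of the statement is exactly Proposition~\ref{distorcao}, whose proof in turn rests on the abundance of hyperbolic times (Proposition~\ref{temp hiperbolicos}) and the exponential contraction of inverse branches along hyperbolic times (Remark~\ref{epsilon}). The only minor points to check are that $J_{\nu}G$ is strictly positive, so that the quotient is well defined --- this holds since $\phi$ is continuous on the compact set $\Lambda$, hence $e^{-\phi}$ is bounded away from $0$ and $\infty$, and $\lambda>0$ by Lemma~\ref{cota lambda} --- and that the constant $K_{0}$ furnished by Proposition~\ref{distorcao} depends neither on $n$ nor on the particular hyperbolic cylinder, which is precisely what that proposition asserts. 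Consequently $K_{1}$ is uniform, as required.
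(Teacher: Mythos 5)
Your proof is correct and follows essentially the same route as the paper: compute $J_{\nu}G^{n}=\lambda^{n}e^{-S_{n}\phi}$ via Propositions~\ref{jacobiano} and~\ref{regra da cadeia}, cancel the $\lambda^{n}$ in the quotient, apply Proposition~\ref{distorcao}, and take $K_{1}=e^{K_{0}}$. The extra remarks on positivity and uniformity are sound but not substantively different from the paper's argument.
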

\begin{proof}
Given $n \in \mathbb{N}$, the Proposition~\ref{regra da cadeia} implies that we can calculate the Jacobian of $\nu$ with respect to $G^n$ using the Jacobian of $\nu$ with respect to $G$. Then, according to Proposition~\ref{jacobiano} for each $x \in \Lambda$:
\begin{equation}
   J_{\nu}G^n(x)= \lambda^{n}e^{- S_{n}\phi (x)}.  \label{jac do iterado}
\end{equation}     
Consider $R^n$ a hyperbolic cylinder of length $n$ and $x,y \in R^n$. 
By the Proposition~\ref{distorcao}: 
$$ \left| log \left ( \frac{J_{\nu}G^{n}(x)}{J_{\nu}G^{n}(y)} \right) \right|  = |S_{n}\phi(x) - S_{n}\phi(y)| \leq K_{0}. $$
To finish the proof, consider $K_1 = e^{K_0}$.
\end{proof}

Given $n \in \mathbb{N}$, let $B^{n}$ be the collection of cylinders $R^n(a_0, \cdots, a_{n-1})$ generated by words $(a_0, \cdots, a_{n-1}) \not\in I(\gamma,n)$. 

Let $P^{n}$ be the collection of cylinders generated by words of length $n$ belonging to $I(\gamma,n)$.  

\begin{Prop}\label{dec exp}
The measure of the cylinders in $P^n$, with respect to $\nu$, decreases exponentially when $n$ goes to infinity. Therefore, $\nu$ almost every point $x \in \Lambda$ belongs to elements of $B^{n}$ for infinitely many values of $n$.
\end{Prop}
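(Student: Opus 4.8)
The plan is to estimate the $\nu$-measure of the union $\bigcup P^n$ of ``bad'' cylinders (those coming from words in $I(\gamma,n)$) using the conformality of $\nu$ established in Proposition~\ref{jacobiano}, together with the cardinality bound of Proposition~\ref{cilindros ruins} and the lower bound for $\lambda$ of Lemma~\ref{cota lambda}. Concretely, for a cylinder $R^n=R^n(a_0,\dots,a_{n-1})$ the iterate $G^n$ maps $R^n$ onto a full rectangle (a union of at most finitely many of the $R_i$), and $G^n|_{R^n}$ is injective; hence by the definition of the Jacobian and formula~\eqref{jac do iterado},
$$
\nu\big(G^n(R^n)\big) \;=\; \int_{R^n} J_\nu G^n \, d\nu \;=\; \int_{R^n} \lambda^n e^{-S_n\phi}\, d\nu \;\geq\; \lambda^n e^{-n\sup\phi}\,\nu(R^n),
$$
so that $\nu(R^n) \leq \lambda^{-n} e^{n\sup\phi}\,\nu(G^n(R^n)) \leq \lambda^{-n} e^{n\sup\phi}$. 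Summing over all cylinders in $P^n$ and invoking Proposition~\ref{cilindros ruins}, which gives $\#I(\gamma,n)\leq e^{\beta n}$ with $\beta=\tfrac{\log\omega}{2}$ for a suitable $\gamma$,
$$
\nu\Big(\bigcup_{R^n\in P^n} R^n\Big) \;\leq\; \#I(\gamma,n)\cdot \lambda^{-n} e^{n\sup\phi} \;\leq\; e^{\beta n}\,\lambda^{-n}\,e^{n\sup\phi}.
$$

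Now I plug in the lower bound $\lambda\geq e^{\inf\phi+\log\omega}$ from Lemma~\ref{cota lambda}, obtaining
$$
\nu\Big(\bigcup_{R^n\in P^n} R^n\Big) \;\leq\; \exp\Big(n\big(\tfrac{\log\omega}{2} + \sup\phi - \inf\phi - \log\omega\big)\Big) \;=\; \exp\Big(n\big(\sup\phi-\inf\phi-\tfrac{\log\omega}{2}\big)\Big).
$$
By the standing hypothesis~\eqref{variacaopequena} on the potential, $\sup\phi-\inf\phi<\tfrac{\log\omega}{2}$, so the exponent is $-\tau n$ for some $\tau>0$; this is exactly the claimed exponential decay of $\nu(\bigcup P^n)$. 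The second assertion then follows from the Borel--Cantelli lemma: since $\sum_n \nu(\bigcup P^n)<\infty$, for $\nu$-a.e.\ $x$ there are only finitely many $n$ with $x$ lying in a cylinder of $P^n$; equivalently, $x$ lies in a cylinder of $B^n$ for all sufficiently large $n$, in particular for infinitely many $n$.

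The one point that needs care — and which I expect to be the main (modest) obstacle — is the claim that $G^n|_{R^n}$ is injective with image a full union of rectangles, so that the Jacobian identity applies verbatim to $R^n$ rather than to a smaller set; this requires checking the Markov structure of the partition $\{R_1,R_2,R_3\}$ under $G$ (each $R_i$ is mapped over exactly the rectangles allowed by the transition matrix $A$, and the horizontal contraction makes the restriction injective on each cylinder). Once that Markov property is in hand, the estimate above is a direct computation, and no distortion control is even needed for this particular proposition — Proposition~\ref{distorcao} and Lemma~\ref{quase Gibbs} will be used later, for the finer Gibbs estimates, but not here. I would also remark that the strict inequality in~\eqref{variacaopequena} is what gives a genuinely summable series, which is all Borel--Cantelli requires.
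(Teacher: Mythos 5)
Your proposal matches the paper's proof essentially step for step: the same conformality estimate $\nu(R^n)\leq\lambda^{-n}e^{n\sup\phi}$ via Proposition~\ref{jacobiano} and~\eqref{jac do iterado}, the same cardinality bound from Proposition~\ref{cilindros ruins}, and the same spectral‐radius lower bound from Lemma~\ref{cota lambda} combined with the small‐variation hypothesis~\eqref{variacaopequena} to get a strictly negative exponent. You make the Borel--Cantelli step and the injectivity/Markov point explicit, which the paper leaves tacit (and you also silently correct a typo in the paper's displayed sum, where the exponent should read $n(\sup\phi+\beta)$), but the route is the same.
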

\begin{proof}
Let $n\! \in \! \mathbb{N}$ and take $R^{n}\in P^{n}$ a cylinder.
Using \eqref{jac do iterado}, we have:
\begin{eqnarray*}
    \nu \big(G^{n}(R^{n})\big)&  =   & \displaystyle\int_{R^{n}} J_{\nu}G^{n} \ d\nu \\
                              &  =   & \displaystyle\int_{R^{n}} \lambda^{n}e^{-S_{n}\phi} \ d\nu \\
                              & \geq & \lambda^{n}e^{-{n}\sup\phi} \nu(R^{n}).
\end{eqnarray*}
Whereby $\nu(G^{n}(R^{n})) \leq 1$ it implies $\nu(R^{n}) \leq  \lambda^{-n}e^{{n}\sup\phi}$.
If we take the sum over all cylinders generated by words $(a_0, \cdots, a_{n-1}) \in I(\gamma,n) $ and use the Proposition~\ref{cilindros ruins}, we obtain:
        $$ \displaystyle\sum _{I(\gamma,n)} \nu(R^{n}) \leq \lambda^{-n}e^{\sup\phi + \beta}.  $$
On the other hand,  Lemma~\ref{cota lambda} gives us $\lambda \geq \exp(\inf \phi + \log \omega)$. Since $\beta = \frac{\log(\omega)}{2}$, we have the result.
\end{proof}

We use the notation $R_h^{n}$ to refer to hyperbolic cylinders of length $n$.
In the Proposition~\ref{temp hiperbolicos} we obtained plenty of  hyperbolic times associated to words which do not belong to the set $I(\gamma,n)$.

Let $\mathcal{H}$ be the closure of the set of points $x\in \Lambda$ which belong to infinitely many hyperbolic cylinders $R_{h}^{n}$. Next, we prove that $\mathcal{H}$ is a $\nu$ full measure set with respect to $\nu$. 

\begin{Prop}\label{H total}
Let $n_1(x)$ be the first hyperbolic time of $x$. We have that  $n_{1}$ is integrable and $\nu(\mathcal{H})= 1$.
\end{Prop}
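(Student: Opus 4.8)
The plan is to read off both assertions from two facts already in hand: the exponential decay of the total $\nu$-mass carried by bad cylinders (Proposition~\ref{dec exp}), and the fact that every admissible word outside $I(\gamma,n)$ has a definite proportion of hyperbolic cylinders among its truncations (Proposition~\ref{temp hiperbolicos}).

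For the integrability of $n_1$, I would estimate the tails $\nu(\{x:n_1(x)>n\})$. Suppose $n_1(x)>n$, so that no $m\le n$ is a hyperbolic time for $x$; since by definition every point of a hyperbolic cylinder $R_h^m$ has $m$ as a hyperbolic time, the truncation of the itinerary of $x$ of length $m$ generates no hyperbolic cylinder for any $m\le n$. By the contrapositive of Proposition~\ref{temp hiperbolicos} (whose conclusion, for $n$ large, produces at least one hyperbolic truncation as soon as the length-$n$ word is admissible and lies outside $I(\gamma,n)$), the length-$n$ itinerary of $x$ must belong to $I(\gamma,n)$, i.e. $x$ lies in a cylinder of $P^n$. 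Using $\#I(\gamma,n)\le e^{\beta n}$ (Proposition~\ref{cilindros ruins}) and the bound $\nu(R^n)\le\lambda^{-n}e^{n\sup\phi}$ established in the proof of Proposition~\ref{dec exp}, this yields
\[
\nu\bigl(\{\,n_1>n\,\}\bigr)\ \le\ \sum_{R^n\in P^n}\nu(R^n)\ \le\ e^{\beta n}\,\lambda^{-n}\,e^{n\sup\phi}.
\]
By Lemma~\ref{cota lambda} we have $\lambda\ge e^{\inf\phi+\log\omega}$, and since $\beta=\tfrac{\log\omega}{2}$ the right-hand side is at most $\tau^{n}$, where $\tau=\exp\bigl((\sup\phi-\inf\phi)-\tfrac{\log\omega}{2}\bigr)<1$ precisely by the smallness hypothesis~\eqref{variacaopequena}. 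Summing, $\int n_1\,d\nu=\sum_{n\ge0}\nu(\{n_1>n\})<\infty$, and in particular $\nu(\{n_1=\infty\})=0$, so $\nu$-a.e.\ point has a first hyperbolic time.

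For $\nu(\mathcal{H})=1$, Proposition~\ref{dec exp} already says that the set of points lying in some cylinder of $B^n$ for infinitely many $n$ has full $\nu$-measure. Fix such an $x$ and a large $n$ with $x$ in a good cylinder of length $n$. Proposition~\ref{temp hiperbolicos} provides $l>\theta n$ indices $1\le n_1<\cdots<n_l\le n$ such that the truncation of the itinerary of $x$ of length $n_j$ is a hyperbolic cylinder; as these are $l$ distinct positive integers, $n_l\ge l>\theta n$. Hence $x$ belongs to hyperbolic cylinders of length at least $\theta n$ for arbitrarily large $n$, so to infinitely many hyperbolic cylinders $R_h^m$. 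Thus $\nu$ gives full measure to the (not necessarily closed) set of points lying in infinitely many hyperbolic cylinders, and a fortiori $\nu(\mathcal{H})=1$.

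The only step that needs care is the tail estimate: it is summable exactly because the combinatorial bound $e^{\beta n}$ on the number of bad words, the a priori lower bound $\lambda\ge e^{\inf\phi+\log\omega}$, and the condition $\sup\phi-\inf\phi<\tfrac{\log\omega}{2}$ conspire to make the ratio $e^{\beta+\sup\phi}/\lambda$ genuinely smaller than $1$; everything else is bookkeeping with the propositions already proved.
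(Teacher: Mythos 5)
Your proposal is correct and follows essentially the same route as the paper: identify $\{n_1>n\}\subset\bigcup P^n$ via the contrapositive of Proposition~\ref{temp hiperbolicos}, sum the tails using the exponential decay from Proposition~\ref{dec exp} (you simply re-derive the bound $\nu(P^n)\le e^{\beta n}\lambda^{-n}e^{n\sup\phi}$ explicitly rather than quoting it), and then use the $\theta n$ hyperbolic truncations on $B^n$-cylinders together with the Borel--Cantelli-type consequence of Proposition~\ref{dec exp} to get $\nu(\mathcal{H})=1$. The bookkeeping is a little more explicit than the paper's, but the key lemmas invoked and the logical structure are the same.
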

\begin{proof}
By the Proposition~\ref{temp hiperbolicos}, for each $x \in B^{n}$ we have $n_{1}(x) \leq n$. Therefore, if $x$ is such that $n_{1}(x) > n$ we have $x \in P^{n}$. Then:
     $$ \int n_{1} \, d\nu = \displaystyle\sum_{n=0}^{\infty} \nu\big( \{ x: n_{1}(x) > n \}   \big) \leq 1 +  \displaystyle\sum_{n=1}^{\infty} \nu(P^{n}) . $$
According to the Proposition~\ref{dec exp}, the measure of the sets in $P^{n}$ decreases exponentially  with $n$, which implies that the sum in the last inequality is finite. Then, the first hyperbolic time is integrable with respect to $\nu$.
Since $B^{n}$ consists of cylinders generated by words in the complement of  $I(\gamma, n)$, the Proposition~\ref{temp hiperbolicos} guarantees that every point  belongs to at least $l$ hyperbolic cylinders, where $l > \theta n $. Besides that, by the Proposition~\ref{dec exp}, for $n$ large enough, $\nu$ almost every point belongs to a set in $B^n$. Thus, $\nu$ every point $x$ in $\Lambda$ belongs to a infinitely many hyperbolic cylinders, then, $\nu(\mathcal{H}) = 1$.
\end{proof}

We consider for each $x \in \mathcal{H}$, $n_k(x)$ the $k$-th hyperbolic time of $x$. We also consider $\theta$ as in the Proposition~\ref{temp hiperbolicos}. According to the proof of Proposition~\ref{H total} we have the following corollary:

\begin{Cor}\label{teta pliss}
For $\nu$ every point $x \in \Lambda$ and $n$ large enough the following holds:
    $$ \# \{k\geq 1 : n_{k}(x) \leq n  \} \geq \theta n .$$
\end{Cor}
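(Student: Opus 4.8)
\section*{Proof proposal for Corollary~\ref{teta pliss}}

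The plan is to deduce the statement directly from Proposition~\ref{dec exp} and Proposition~\ref{temp hiperbolicos}, reorganising the argument already used in the proof of Proposition~\ref{H total}.

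First, by Proposition~\ref{dec exp} the $\nu$-measure of the union of the cylinders belonging to $P^{n}$ decays exponentially with $n$, so $\sum_{n\ge 1}\nu\big(\bigcup_{R^{n}\in P^{n}}R^{n}\big)<\infty$. The Borel--Cantelli lemma then yields a $\nu$-full measure set of points $x\in\Lambda$ for each of which there is an integer $N(x)$ such that $x$ lies in no cylinder of $P^{n}$ whenever $n\ge N(x)$; equivalently, for every such $n$ the length-$n$ itinerary $(a_{0},\dots,a_{n-1})$ of $x$ (determined by $G^{j}(x)\in R_{a_{j}}$ for $0\le j\le n-1$) is an admissible word \emph{not} lying in $I(\gamma,n)$, that is, $x$ belongs to a cylinder of $B^{n}$.

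Fix such an $x$ and $n\ge N(x)$, and apply Proposition~\ref{temp hiperbolicos} to the word $(a_{0},\dots,a_{n-1})\notin I(\gamma,n)$: it produces $l>\theta n$ integers $1\le n_{1}<\dots<n_{l}\le n$ such that each cylinder $\mathcal{R}^{n_{j}}(a_{0},\dots,a_{n_{j}-1})$ is hyperbolic, i.e. $n_{j}$ is a hyperbolic time for \emph{every} point of that cylinder. Since $n_{j}\le n$ one has $\mathcal{R}^{n}(a_{0},\dots,a_{n-1})\subset\mathcal{R}^{n_{j}}(a_{0},\dots,a_{n_{j}-1})$, so $x$ lies in each of these cylinders and each $n_{j}$ is a hyperbolic time of $x$. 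Hence $x$ has at least $l$ hyperbolic times not exceeding $n$; as $k\mapsto n_{k}(x)$ enumerates the hyperbolic times of $x$ in increasing order, this means $\#\{k\ge 1: n_{k}(x)\le n\}\ge l>\theta n$, which is the asserted inequality.

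Being a straightforward consequence of earlier results, the argument presents no genuine obstacle; two points merit attention. First, the exceptional null set comes only from Borel--Cantelli and hence does not depend on $n$, so the threshold $N(x)$ is precisely what legitimises the phrase ``$n$ large enough'' in the statement. Second, the hyperbolic-time estimate supplied by Proposition~\ref{temp hiperbolicos} is \emph{uniform over each cylinder} --- it is obtained by applying the Pliss Lemma to the sequence $a_{t}=b_{i_{t}}$ of infima of $\log\|DG(\cdot)^{-1}\|^{-1}$ over the rectangles --- so it transfers to the specific point $x$, and not merely to generic points of the cylinder.
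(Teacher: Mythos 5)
Your proposal is correct and follows the same route as the paper: the paper explicitly says the corollary follows from the proof of Proposition~\ref{H total}, which uses exactly the combination you spell out — the exponential decay in Proposition~\ref{dec exp} (hence Borel--Cantelli) to place $\nu$-a.e.\ $x$ in some $B^{n}$ for all large $n$, and then Proposition~\ref{temp hiperbolicos} to extract $l>\theta n$ hyperbolic times at or below $n$. You merely make the Borel--Cantelli step and the cylinder-to-point transfer explicit, which the paper leaves implicit.
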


In the next proposition we estimate the measure of hyperbolic cylinders with respect to $\nu$. This property will  be useful later.
%
%\subsection{Non-lacunary Gibbs Measures}
%
%In this section we will prove that the transfer measure obtained in the Theorem~\ref{Teorema B} satisfies a Gibbs type condition.
%
%We will prove that $\nu$ is a Non-lacunary Gibbs measure. This concept was introduced by Oliveira and Viana in \cite{OV05}.

\begin{Prop}\label{Gibbs}
There exists a constant $K_{2} > 0$ such that for every $x \in R^{n}_h$, a hyperbolic cylinder of length $n$, and every $n \in \mathbb{N}$, we have:
$$   K ^{-1}_{2} \leq \frac{\nu (R^{n})}{\exp(S_{n}\phi(x) - Pn)}\leq K_{2}, $$
where $ P= log \lambda.$
\end{Prop}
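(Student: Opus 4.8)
The plan is to reduce the two-sided estimate to a uniform control of $\nu\big(G^{n}(R^{n}_{h})\big)$, which I would then extract from the geometry of hyperbolic times together with the conformality of $\nu$.

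First I would record, using Propositions~\ref{jacobiano} and \ref{regra da cadeia}, that $J_{\nu}G^{n}(x)=\lambda^{n}e^{-S_{n}\phi(x)}=e^{Pn-S_{n}\phi(x)}$, so that, by the definition of the Jacobian applied to $G^{n}$ on the hyperbolic cylinder $R^{n}_{h}$ (on which $G^{n}$ is injective),
$$\nu\big(G^{n}(R^{n}_{h})\big)=\int_{R^{n}_{h}}J_{\nu}G^{n}\,d\nu .$$
The bounded distortion of $J_{\nu}G^{n}$ on hyperbolic cylinders (Lemma~\ref{quase Gibbs}) then lets me pull $J_{\nu}G^{n}(x)$ out of the integral at the cost of the factor $K_{1}$, giving
$$K_{1}^{-1}\,\nu\big(G^{n}(R^{n}_{h})\big)\ \le\ \frac{\nu(R^{n}_{h})}{\exp\big(S_{n}\phi(x)-Pn\big)}\ \le\ K_{1}\,\nu\big(G^{n}(R^{n}_{h})\big).$$
The upper bound of the proposition is then immediate, since $\nu\big(G^{n}(R^{n}_{h})\big)\le\nu(\Lambda)=1$; so everything reduces to bounding $\nu\big(G^{n}(R^{n}_{h})\big)$ from below by a positive constant that is independent of $n$ and of the cylinder.

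For the lower bound I would use that $n$ is a hyperbolic time for every point of $R^{n}_{h}$. By the property recalled in Remark~\ref{epsilon} there is $\epsilon>0$, depending only on $c$ and $G$, together with an inverse branch of $G^{n}$ that is defined and contracting on the whole ball $B\big(G^{n}(x),\epsilon\big)$ and sends $G^{n}(x)$ to $x$. Since $\Lambda=\bigcap_{k\ge 0}G^{-k}\big(\bigcup_{i}R_{i}\big)$ is the maximal invariant set, this branch carries $B\big(G^{n}(x),\epsilon\big)\cap\Lambda$ into $R^{n}_{h}$, whence $G^{n}(R^{n}_{h})\supseteq B\big(G^{n}(x),\epsilon\big)\cap\Lambda$. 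Thus it suffices to show that $c_{0}:=\inf_{z\in\Lambda}\nu\big(B(z,\epsilon)\cap\Lambda\big)>0$. I would deduce this from the fact that $\nu$ has full support: conformality (Proposition~\ref{jacobiano}) makes the largest $\nu$-null open subset of $\Lambda$ dynamically invariant, so topological transitivity of $G$ on $\Lambda$ should force it to be empty; once $\mathrm{supp}\,\nu=\Lambda$, the map $z\mapsto\nu\big(B(z,\epsilon)\cap\Lambda\big)$ is strictly positive and lower semicontinuous on the compact space $\Lambda$, hence bounded below by some $c_{0}>0$. Combining the two bounds gives the proposition with $K_{2}=K_{1}/c_{0}$.

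I expect the genuine difficulty to be exactly this uniform lower bound $\nu\big(B(z,\epsilon)\cap\Lambda\big)\ge c_{0}$, equivalently the positivity of $\nu$ on every $\epsilon$-ball of $\Lambda$. Here the degeneracies of the system intervene: $\Lambda$ is not a Cantor set (it contains entire segments, for instance the central fibre over the constant sequence $1\,1\,1\cdots$) and the central direction is only non-uniformly expanding, so one cannot appeal to the topological exactness of $G$ that underlies the uniformly expanding case of Oliveira and Viana. Instead the argument has to exploit that being a hyperbolic time forces $G^{n}(x)$ to lie in the expanding core of the dynamics, away from the degenerate central behaviour, and combine this with the uniform expansion of the horizontal direction ($\alpha=\rho^{-1}>3$), the primitivity of the transition matrix ($A^{3}>0$) and the Markov structure of the rectangles to pin down the constant $c_{0}$.
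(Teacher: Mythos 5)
Your reduction to a two-sided control of $\nu\big(G^{n}(R^{n}_{h})\big)$ matches the content of the paper's proof: the paper writes $e^{Pn}\nu(R^{n})=\int\mathcal L_{\phi}^{n}\chi_{R^{n}}\,d\nu$ from $\big(\mathcal L_{\phi}^{\ast}\big)^{n}\nu=\lambda^{n}\nu$ and then applies the distortion estimate of Proposition~\ref{distorcao}, which is exactly what you extract via the Jacobian from Lemma~\ref{quase Gibbs}; the two formulations are equivalent, and the upper bound $\nu\big(G^{n}(R^{n}_{h})\big)\le 1$ is common to both. Where you diverge is the lower bound on $\nu\big(G^{n}(R^{n}_{h})\big)$, and there you take a substantially heavier route than necessary. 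You pass through the $\epsilon$-ball property of hyperbolic times to get $B(G^{n}(x),\epsilon)\cap\Lambda\subseteq G^{n}(R^{n}_{h})$ and then need $\inf_{z\in\Lambda}\nu\big(B(z,\epsilon)\cap\Lambda\big)>0$, which drags in full support of $\nu$, topological transitivity and a semicontinuity-plus-compactness step that you yourself flag as delicate and leave open. None of that is required, because the cylinders are Markov: for any admissible word, $G^{n}\big(\mathcal R^{n}(a_{0},\dots,a_{n-1})\big)=G(R_{a_{n-1}})\cap\Lambda$, which equals $(R_{1}\cup R_{2})\cap\Lambda$ if $a_{n-1}\in\{1,3\}$ and $R_{3}\cap\Lambda$ if $a_{n-1}=2$. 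So there are only two possible images, and it suffices to check $\nu(R_{i})>0$ for $i=1,2,3$. This is immediate from conformality (Proposition~\ref{jacobiano}), which gives $\lambda e^{-\sup\phi}\nu(A)\le\nu(G(A))\le\lambda e^{-\inf\phi}\nu(A)$ whenever $G|_{A}$ is injective: since $\nu(R_{1}\cup R_{2})=\nu(G(R_{1}))\le\lambda e^{-\inf\phi}\nu(R_{1})$ and $\nu(R_{3})=\nu(G(R_{2}))\le\lambda e^{-\inf\phi}\nu(R_{2})$, the vanishing of $\nu(R_{1})$ would force $\nu(R_{2})=\nu(R_{3})=0$, contradicting $\nu(\Lambda)=1$; then $\nu(R_{1})>0$ propagates to $\nu(R_{3})>0$ and $\nu(R_{2})>0$ via $\nu(R_{1}\cup R_{2})=\nu(G(R_{3}))\le\lambda e^{-\inf\phi}\nu(R_{3})$ and the previous inequality. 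Taking $c_{0}=\min\{\nu(R_{1}\cup R_{2}),\nu(R_{3})\}$ yields $K_{2}=K_{1}/c_{0}$ without ever leaving the Markov/conformal framework. The paper's written proof ends right after the distortion step, but this Markov observation is what makes that step ``enough to finish''.
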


\begin{proof}
Let $n \! \in \! \mathbb{N}$, $R^{n} = R^{n}(a_{0},a_{1},\cdots, a_{n-1})$ and $x \! \in \! R^{n}$.
By the Proposition~\ref{med referencia} we have 
$$ \left( \mathcal{L}_{\phi}^{\ast}\right)^{n} \nu = \lambda^{n}\nu \quad \forall n \in \mathbb{N}.  $$
And then, we obtain
\begin{eqnarray*}
   e^{Pn} \nu \left(R ^{n} \right) & = & e^{Pn} \int \chi_{R ^{n}}  d\nu \\
                                   & = & \frac{e^{Pn}}{\lambda^{n}}\int \mathcal{L}_{\phi}^{n} \left( \chi_{R ^{n}}\right) d \nu   \\
                                   & = & \int \mathcal{L}_{\phi}^{n} \left( \chi_{R ^{n}}   \right) d \nu,
\end{eqnarray*}
where $\chi_{R^{n}}$ denotes the characteristic function on $R^n$ and $e ^{P}= \lambda$.
According to the transfer operator definition we can write: 
$$ \mathcal{L}_{\phi}^{n}  \chi_{R ^{n}}(x)= \displaystyle\sum _{y \in G^{-n}(x)} e^{S_{n}\phi (y)}  \chi_{R ^{n}}(y) =  \displaystyle\sum _{y \in G^{-n}(x)\cap R^{n}} e^{S_{n}\phi (y)}   .$$
Therefore, given $n \in \mathbb{N}$ for each $x \in R^{n}$ there exists a unique element $y \in G ^{-n}(x)\cap R^{n}(x)$. We define $A(n)= \{y \in G^{-n}(x)\cap R ^{n}\}$ and then:
$$ \left| \frac{\nu (R^{n})}{\exp\left(S_{n}\phi(x) - Pn \right)} \right|  = \left| \frac{ \int_{A(n)}e^{S_{n}\phi}d \nu}{e^{S_{n}\phi(x)}} \right|.$$
By the Proposition~\ref{distorcao} there exists a constant $K_0>0$ satisfying:
$$\mid S_{n}\phi(x) - S_{n}\phi(y) \mid < K_0 ,  $$
for all x, y that belong to the same hyperbolic cylinder, which is enough to finish the proof.
\end{proof}

%%%%%%%%%%%%%%%%%%%%%%%%%%%%%%%%%%%%%%%%%%%%%%%%%%%%%%%%%%%%%%%%%%%%%%%%%%%%%%%%%%%%%%%%%%%%%%%%%%%%%%%%%%%%%%%%
%%%%%%%%%%%%%%%%%%%%%%%%%%%%%%%%%%%%%%%%%%%%%%%%%%%%%%%%%%%%%%%%%%%%%%%%%%%%%%%%%%%%%%%%%%%%%%%%%%%%%%%%%%%%%%%%
%%%%%%%%%%%%%%%%%%%%%%%%%% OPERADOR DE TRANSFERENCIA               %%%%%%%%%%%%%%%%%%%%%%%%%%%%%%%%%%%%%%%%%%%
%%%%%%%%%%%%%%%%%%%%%%%%%%%%%%%%%%%%%%%%%%%%%%%%%%%%%%%%%%%%%%%%%%%%%%%%%%%%%%%%%%%%%%%%%%%%%%%%%%%%%%%%%%%%%%%%
%%%%%%%%%%%%%%%%%%%%%%%%%%%%%%%%%%%%%%%%%%%%%%%%%%%%%%%%%%%%%%%%%%%%%%%%%%%%%%%%%%%%%%%%%%%%%%%%%%%%%%%%%%%%%%%%
\subsection{Transfer Operator Eigenfunction}
\label{autofuncao}

In this section we will show the existence of an eigenfunction to the transfer operator of Ruelle-Perron-Fr\"obenius. We will, therefore, show the second part of the Theorem~\ref{Teorema B} and finish its proof. Besides that, we will show that any eigenfunction is positive and H\"older continuous.

\begin{Teo}
There exists an eigenfunction $h : \Lambda  \longrightarrow  \mathbb{R}$ associated to the operator $\mathcal{L}_\phi$. Moreover, it is positive and associated to the eigenvalue $\lambda = r(\mathcal{L}_ \phi)$: 
$$\mathcal{L}_{\phi} h = \lambda h.$$ 
\end{Teo}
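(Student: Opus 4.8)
The plan is to build $h$ as a Cesàro limit of the normalized iterates of the transfer operator applied to the constant function $\mathtt{1}$, following the classical Krylov–Bogolyubov scheme adapted to operators. Concretely, set
$$
h_n := \frac{1}{n}\sum_{j=0}^{n-1} \frac{1}{\lambda^j}\,\mathcal{L}_\phi^j \mathtt{1},
$$
and aim to extract a uniformly convergent subsequence whose limit $h$ satisfies $\mathcal{L}_\phi h = \lambda h$. For this I need (i) a uniform lower bound $h_n \geq \mathrm{const} > 0$, (ii) a uniform upper bound, and (iii) equicontinuity, so that Arzelà–Ascoli applies; the eigenfunction equation then follows because $\mathcal{L}_\phi h_n - \lambda h_n = \tfrac{1}{n}(\lambda^{-(n-1)}\mathcal{L}_\phi^n\mathtt{1} - \lambda \mathtt{1})\to 0$ uniformly once the iterates are controlled.

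First I would establish the bounds on $\lambda^{-j}\mathcal{L}_\phi^j \mathtt{1}$. The upper bound is immediate from $\mathcal{L}_\phi^j\mathtt{1}(x)=\sum_{y\in G^{-j}(x)}e^{S_j\phi(y)} \leq \#\{G^{-j}(x)\}\,e^{j\sup\phi}$ together with the subexponential-to-$\omega$ growth of the number of preimages and Lemma~\ref{cota lambda}; since $\sup\phi - \inf\phi < \tfrac{\log\omega}{2}$, the exponent $j(\sup\phi - \inf\phi - \log\omega)$ stays negative, giving a uniform upper bound on $h_n$. For the lower bound I would use the abundance of hyperbolic times: by Corollary~\ref{teta pliss}, $\nu$-a.e.\ point has at least $\theta n$ hyperbolic times up to $n$, and for a hyperbolic cylinder $R^n_h$ containing $x$, Proposition~\ref{Gibbs} controls $\nu(R^n_h)$ from below in terms of $e^{S_n\phi(x)-Pn}$; combining this with the transfer-operator identity $\mathcal{L}_\phi^n\chi_{R^n_h} = $ (sum over the single preimage branch in that cylinder) $\geq e^{-K_0}e^{S_n\phi(x)}$ lets me bound $\lambda^{-n}\mathcal{L}_\phi^n\mathtt{1}(x)$ from below by a positive constant along hyperbolic times, and then the Cesàro average $h_n(x)$ inherits a uniform positive lower bound because a definite proportion $\theta$ of the indices $j \leq n$ are hyperbolic times.

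Next, equicontinuity: I would prove that if $d(x,x')$ is small then $x,x'$ lie in a common cylinder of large length $m$, so the preimage trees $G^{-j}(x)$ and $G^{-j}(x')$ are matched branch-by-branch for $j$ up to some function of $m$, and on matched hyperbolic branches the bounded-distortion estimate of Proposition~\ref{distorcao} controls the ratio $e^{S_j\phi(y)}/e^{S_j\phi(y')}$; this yields $|h_n(x)-h_n(x')| \leq C\,d(x,x')^\delta$ uniformly in $n$ (Hölder equicontinuity, with the same exponent $\delta$ as $\phi$). With (i)–(iii) in hand, Arzelà–Ascoli produces a convergent subsequence $h_{n_k}\to h$ with $h$ positive, Hölder, and satisfying $\mathcal{L}_\phi h = \lambda h$. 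Positivity of \emph{any} eigenfunction then follows from the lower-bound argument applied directly: an eigenfunction $h$ with $\mathcal{L}_\phi h = \lambda h$ cannot vanish anywhere, since $\lambda^n h(x) = \mathcal{L}_\phi^n h(x) = \sum_{y\in G^{-n}(x)} e^{S_n\phi(y)}h(y)$ and choosing $n$ a hyperbolic time for a point where $h$ is bounded away from $0$ forces $h(x)>0$ via the distortion and Gibbs estimates; and Hölder continuity of an arbitrary eigenfunction comes from the same branch-matching/distortion argument applied to the fixed-point equation.

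The main obstacle I anticipate is the uniform \emph{lower} bound on $h_n$, because the number of preimages is not constant (rectangle $3$ has a single preimage) and the "bad" cylinders in $I(\gamma,n)$ can occupy most of an orbit; controlling $\lambda^{-n}\mathcal{L}_\phi^n\mathtt{1}$ from below therefore cannot be done termwise and genuinely requires funneling through the positive-density set of hyperbolic times (Proposition~\ref{temp hiperbolicos}, Corollary~\ref{teta pliss}) combined with the Gibbs-type control of Proposition~\ref{Gibbs} and the small-variation hypothesis \eqref{variacaopequena}, which is exactly where the bound $\tfrac{\log\omega}{2}$ is consumed. A secondary technical point is making the branch-matching for equicontinuity precise given that $G$ is only semiconjugate (not conjugate) to the subshift, so I would phrase it purely in terms of cylinder sets $\mathcal{R}^n$ rather than symbolic sequences.
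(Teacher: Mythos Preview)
Your scheme has the right skeleton (Ces\`aro averages, Arzel\`a--Ascoli, then read off the eigenfunction equation), but two of the three pillars do not stand as written, and the paper's proof differs precisely in the device it introduces to repair them.

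\textbf{The upper bound is miscomputed.} From $\mathcal{L}_\phi^j\mathtt{1}(x)\le \#\{G^{-j}(x)\}\,e^{j\sup\phi}$ with $\#\{G^{-j}(x)\}\asymp \omega^j$, and from $\lambda\ge e^{\inf\phi+\log\omega}$ (Lemma~\ref{cota lambda}), you get
\[
\lambda^{-j}\mathcal{L}_\phi^j\mathtt{1}(x)\;\lesssim\; e^{-j(\inf\phi+\log\omega)}\,\omega^j\,e^{j\sup\phi}\;=\;e^{\,j(\sup\phi-\inf\phi)},
\]
whose exponent is $\sup\phi-\inf\phi>0$, \emph{not} $\sup\phi-\inf\phi-\log\omega$. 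The $\log\omega$ in the bound on $\lambda$ is exactly cancelled by the growth of the preimage count, so this crude estimate gives no uniform bound on $\lambda^{-j}\mathcal{L}_\phi^j\mathtt{1}$ and hence none on $h_n$.

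\textbf{Equicontinuity fails on non-hyperbolic branches.} Your branch-matching argument invokes Proposition~\ref{distorcao}, but that distortion bound is proved \emph{only} for hyperbolic cylinders: the contraction of inverse branches comes from Remark~\ref{epsilon}, which requires $n$ to be a hyperbolic time. On a non-hyperbolic branch (e.g.\ one that stays in $R_1\cup R_3$, where $\|DG(x)^{-1}\|$ can be as large as $e$), the paired preimages $y,y'$ need not be close at intermediate times, and $|S_n\phi(y)-S_n\phi(y')|$ can be of order $n$. Since $\mathcal{L}_\phi^j\mathtt{1}$ sums over \emph{all} branches, you cannot conclude H\"older equicontinuity of $\lambda^{-j}\mathcal{L}_\phi^j\mathtt{1}$ this way.

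The paper's remedy is to replace $\mathcal{L}_\phi^n\mathtt{1}$ by the truncated sum
\[
T_n(x)=\sum_{\substack{y\in G^{-n}(x)\cap R^n\\ R^n\ \text{hyperbolic}}} e^{S_n\phi(y)},
\]
and to take $h_n=\tfrac{1}{n}\sum_{i}\lambda^{-i}T_i$. Restricting to hyperbolic branches immediately yields both the uniform upper bound (via Proposition~\ref{Gibbs}: $\lambda^{-n}T_n\le K_2\sum_{R^n\ \mathrm{hyp}}\nu(R^n)\le K_2$) and H\"older equicontinuity (Lemma~\ref{Tn Holder}, using Proposition~\ref{distorcao} branch by branch). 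The price is that $T_{k+1}\neq \mathcal{L}_\phi T_k$, so the eigenfunction identity for the limit is no longer automatic; one must show that the defect $\lambda^{-k}(\mathcal{L}_\phi T_k-T_{k+1})\to 0$ uniformly (Proposition~\ref{infernal}), and it is \emph{there} that the cardinality bound on $I(\gamma,n)$ (Proposition~\ref{cilindros ruins}) and the small-variation hypothesis~\eqref{variacaopequena} are actually spent. Your lower-bound idea via positive density of hyperbolic times is essentially the right one and matches the paper (Lemma~\ref{limitacao}, Lemma~\ref{h positivo}), but it only becomes usable once the averaged objects are the $T_n$'s rather than the full iterates.
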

We define, for each $n \in \mathbb{N}$, the function $T_{n}: \Lambda \longrightarrow  \mathbb{R}$ by: 
\begin{eqnarray*}     
           T_{n}(x)=  \displaystyle\sum_{{ y \in G^{-n}(x)\cap R^{n}}\atop
           {R^{n} \ hyperbolic} }^{}
             e^{S_{n}\phi(y)}  \qquad \forall x \in \Lambda . 
\end{eqnarray*} 
We also define, for each $n \in \mathbb{N}$:
   $$ h_{n} = \frac{1}{n}\sum_ {i=1}^{n-1}{\lambda} ^{-i}T_{i} . $$
Consider the following sequence of real numbers: 
$$Z_{n}= \sum_{R^{n} \, hyperbolic} e^{sup\left\{S_{n}\phi(x) \mid x \in R^{n} \right\}}.$$
Note that for every $x \in \Lambda$ we have: 
        $$ T_{n}(x) \leq  Z_{n}           .$$
\begin{Lema}\label{limitacao}
There exists a constant $K_{3} > 0$ satisfying
   $$\lambda^{-n}T_{n} \leq \lambda^{-n} Z_{n} \leq  K_{3}\quad \mbox{and} \quad  K_{3}^{-1} \leq \frac{1}{n}\displaystyle\sum_{j=0}^{n-1} \lambda^{-j}Z_{j}.     $$   
\end{Lema}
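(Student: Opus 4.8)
The plan is to prove the two inequalities separately. The first (the upper bound on $\lambda^{-n}Z_n$) is a short consequence of the Gibbs-type estimate of Proposition~\ref{Gibbs}; the second (the lower bound on the Ces\`aro average) combines that estimate with the abundance of hyperbolic times provided by Corollary~\ref{teta pliss}.

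For the upper bound, since $T_n(x)\le Z_n$ for every $x$, it suffices to bound $\lambda^{-n}Z_n$. I fix $n$ and, for each hyperbolic cylinder $R^n$, choose a point $x_{R^n}\in R^n$. Proposition~\ref{distorcao} gives $\sup_{R^n}S_n\phi\le S_n\phi(x_{R^n})+K_0$, and Proposition~\ref{Gibbs} gives $e^{S_n\phi(x_{R^n})-Pn}\le K_2\,\nu(R^n)$ with $P=\log\lambda$. Since the cylinders of a fixed length are pairwise disjoint and $\nu$ is a probability measure, summing yields
$$
\lambda^{-n}Z_n
= e^{-Pn}\!\!\sum_{R^n\ \mathrm{hyp.}}\!\! e^{\sup_{R^n}S_n\phi}
\le e^{K_0}\!\!\sum_{R^n\ \mathrm{hyp.}}\!\! e^{S_n\phi(x_{R^n})-Pn}
\le e^{K_0}K_2\!\!\sum_{R^n\ \mathrm{hyp.}}\!\!\nu(R^n)\le e^{K_0}K_2 ,
$$
so any $K_3\ge e^{K_0}K_2$ settles the first inequality.

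For the lower bound, write $\mathcal H_j$ for the (disjoint) union of the hyperbolic cylinders of length $j$. Using the complementary inequality $e^{S_j\phi(x)-Pj}\ge K_2^{-1}\nu(R^j)$ from Proposition~\ref{Gibbs} in the same way, $\lambda^{-j}Z_j\ge K_2^{-1}\nu(\mathcal H_j)$ for every $j$, hence
$$
\frac1n\sum_{j=0}^{n-1}\lambda^{-j}Z_j\ \ge\ \frac{K_2^{-1}}{n}\sum_{j=1}^{n-1}\nu(\mathcal H_j).
$$
Now I would bound the right-hand side below by noting that $x\in\mathcal H_j$ precisely when $j$ is a hyperbolic time for $x$, so that $\sum_{j=1}^{m}\chi_{\mathcal H_j}(x)=\#\{k\ge1:\ n_k(x)\le m\}\ge\theta m$ for $\nu$-a.e.\ $x$ once $m$ is large, by Corollary~\ref{teta pliss}. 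Passing from this pointwise statement to a uniform one is where care is needed: by an Egorov-type argument there are $N_0$ and a set $E$ with $\nu(E)\ge\tfrac12$ on which the estimate holds for all $m\ge N_0$, and integrating over $E$ gives $\sum_{j=1}^{n-1}\nu(\mathcal H_j)\ge\theta(n-1)\nu(E)\ge\theta(n-1)/2$ for $n>N_0$. Therefore $\frac1n\sum_{j=0}^{n-1}\lambda^{-j}Z_j\ge K_2^{-1}\theta/4$ for all $n$ past a fixed threshold; for the finitely many smaller $n$ the average is a fixed positive number (for instance it dominates $\tfrac1n\lambda^{0}Z_0=\tfrac1n$, since the length-$0$ cylinder is all of $\Lambda$ and $Z_0=1$), so enlarging $K_3$ if necessary completes the proof.

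The main obstacle is this last passage: Corollary~\ref{teta pliss} is an almost-everywhere, pointwise statement in which the phrase ``for $n$ large enough'' may depend on the point, and it must be turned into a lower bound for the numbers $\nu(\mathcal H_j)$ averaged over $j$; the Egorov-type step, together with the disjointness of same-length cylinders, is exactly what accomplishes this. Everything else is routine bookkeeping with the distortion bound (Proposition~\ref{distorcao}) and the conformal/Gibbs estimate (Proposition~\ref{Gibbs}) already established.
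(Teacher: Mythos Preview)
Your proof is correct and follows essentially the same route as the paper: both halves come from the two sides of the Gibbs estimate (Proposition~\ref{Gibbs}) summed over the disjoint hyperbolic cylinders, and the lower bound is then obtained from the positive density of hyperbolic times (Corollary~\ref{teta pliss}). The only difference is cosmetic: where you use an Egorov-type argument to pass from the pointwise density statement to a lower bound on $\frac{1}{n}\sum_j\nu(\mathcal H_j)$, the paper swaps the order of integration via Fubini (writing $\frac{1}{n}\sum_j\nu(\mathcal R_j)=\int\frac{1}{n}\sum_j\chi_{\mathcal R_j}(x)\,d\nu(x)\ge\theta$); your treatment of the ``$n$ large enough'' dependence on $x$ is in fact more careful than the paper's.
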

\begin{proof}
The first inequality is trivial because $\lambda > 0$. So, it is enough to get an upper bound for $\lambda^{-n}Z_{n}$.
Let $n \in \mathbb{N}$, $R^{n}_{h}$ a hyperbolic cylinder and $y \in R_{h}^{n}$. The Proposition~\ref{Gibbs} ensures that 
  $$K_{2}^{-1} \nu(R^{n}) \leq {\lambda}^{-n} e ^{S_{n}\phi (y)}  \leq K_{2} \nu(R^{n}) . $$
By taking the supremum over every $y \in R^{n}$ and taking the sum over all hyperbolic cylinder $R ^{n}$ of length $n$, we have:
\begin{eqnarray}\label{eq}
K_{2}^{-1}\sum_{R^{n} hip.}\nu(R^{n}) \leq \sum_{R^{n} hip.} \lambda^{-n} e ^{sup S_{n}\phi}  \leq  K_{2}\sum_{R^{n} hip.}\nu(R^{n})    .   
\end{eqnarray}
Thus
$$  {\lambda}^{-n}Z_{n} \leq  K_{2} \cdot \nu (\displaystyle\bigcup_{R^{n} hip} R^{n}) \leq  K_{2}.   $$
To show the second part of the Lemma we take the first inequality in \eqref{eq}, and then:
\begin{eqnarray*}
   K_{2}^{-1}\frac{1}{n}\displaystyle\sum_{j=0}^{n-1} \nu(\displaystyle\bigcup_{R^{j} hip} R^{j}) \leq \frac{1}{n}\displaystyle\sum_{j=0}^{n-1} \lambda^{-j}Z_{j} 
\end{eqnarray*}
For every $j\in \mathbb{N}$ let $\mathcal{R}_{j}:= \displaystyle\bigcup_{R^{j}_ hip} R^{j}$ be the union of hyperbolic cylinders of lenght $j$. According to Corollary~\ref{teta pliss}, for almost every point $x \in \Lambda$ and $n$ large enough, we have: 
\begin{eqnarray*}
\frac{1}{n}\displaystyle\sum_{j=0}^{n-1}\nu ({\mathcal{R}}_{j}) & = & \int\int \chi_{\mathcal{R}_{j}}(x) d\nu(x) \, dm_n(j)\\ 
                                                     & = & \int\int \chi_{\mathcal{R}_{j}}(x) \, dm_n(j) d\nu(x) \geq \theta > 0. 
\end{eqnarray*}
Note that we use Fubini's Theorem in the second equality.
To finish the prove, take $K_3 = \frac{K_2}{\theta}$.

\end{proof}

%%%%%%%%%%%%%%%%%%%%%%%%%%%%%%%%%%%%%%%%%%%%%%%%%%%%%%%%%%%%%%%%%%%%%%%%%%%%%%%%%%%%%%%%%%%%%%%%%%%%%%%%%%%%%%%%
%%%%%%%%%%%%%%%%%%%%%%%%%%%%%%%%%%%%%%%%%%%%%%%%%%%%%%%%%%%%%%%%%%%%%%%%%%%%%%%%%%%%%%%%%%%%%%%%%%%%%%%%%%%%%%%%
%%%%%%%%%%%%%%%%%%%%%%%%%%%%   Equicontinuidade     %%%%%%%%%%%%%%%%%%%%%%%%%%%%%%%%%%%%%%%%%%%%%%%%%%%%%%%                                    
%%%%%%%%%%%%%%%%%%%%%%%%%%%%%%%%%%%%%%%%%%%%%%%%%%%%%%%%%%%%%%%%%%%%%%%%%%%%%%%%%%%%%%%%%%%%%%%%%%%%%%%%%%%%%%%%
%%%%%%%%%%%%%%%%%%%%%%%%%%%%%%%%%%%%%%%%%%%%%%%%%%%%%%%%%%%%%%%%%%%%%%%%%%%%%%%%%%%%%%%%%%%%%%%%%%%%%%%%%%%%%%%%

\subsection{Equicontinuity}

Let $\delta$ be the H\"older constant of the potential $\phi$ in Theorem~\ref{Teorema Principal}. We will prove that family $h_n$ is equicontinuous.
\begin{Lema}\label{Tn Holder}
There exists a constant $K_{4}>0$ such that for $n \in \mathbb{N}$, the function ${\lambda}^{-n}T_{n}$ is H\"older continuous with constants $K_4$ and $\delta$.
\end{Lema}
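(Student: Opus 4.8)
The plan is to split the argument according to whether $d(x,x')$ is large or small compared to the uniform radius $\epsilon>0$ furnished by the bounded-neighbourhood property of hyperbolic times (Remark~\ref{epsilon}); we shrink $\epsilon$, if necessary, so that $\epsilon<\operatorname{diam}(\Lambda)$ and so that whenever $n$ is a hyperbolic time for a point $y$, the inverse branch of $G^{n}$ determined by the time-$n$ cylinder of $y$ is defined on the whole ball $B(G^{n}(y),\epsilon)$ and maps it inside that cylinder. When $d(x,x')\ge\epsilon$ there is nothing to prove: Lemma~\ref{limitacao} gives $\lambda^{-n}T_{n}\le K_{3}$, whence
\[
\lambda^{-n}\bigl|T_{n}(x)-T_{n}(x')\bigr|\le 2K_{3}\le 2K_{3}\,\epsilon^{-\delta}\,d(x,x')^{\delta}.
\]
So from now on assume $d(x,x')<\epsilon$.

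The decisive step — and the one I expect to require the most care — is to check that $T_{n}(x)$ and $T_{n}(x')$ are sums over \emph{the same} collection of hyperbolic cylinders, so that their difference becomes a term-by-term difference along matched inverse branches. Concretely: if $R^{n}$ is a hyperbolic cylinder with $x\in G^{n}(R^{n})$ and $y$ denotes the unique point of $G^{-n}(x)\cap R^{n}$, then $n$ is a hyperbolic time for $y$, so Remark~\ref{epsilon} provides an inverse branch $\psi_{R^{n}}\colon B(x,\epsilon)\to R^{n}$ with $\psi_{R^{n}}(x)=y$ and $G^{n}\circ\psi_{R^{n}}=\mathrm{id}$; since $d(x,x')<\epsilon$, the point $y':=\psi_{R^{n}}(x')$ lies in $R^{n}$ and satisfies $G^{n}(y')=x'$, so $x'\in G^{n}(R^{n})$ and $y'$ is the corresponding preimage of $x'$. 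The reverse implication is symmetric, so
\[
T_{n}(x)-T_{n}(x')=\sum_{R^{n}\ \mathrm{hyp.},\ x\in G^{n}(R^{n})}\Bigl(e^{S_{n}\phi(\psi_{R^{n}}(x))}-e^{S_{n}\phi(\psi_{R^{n}}(x'))}\Bigr).
\]
This is where one really uses that $\epsilon$ is uniform in $n$ and in the cylinder, together with the fact that the ABV neighbourhood of a hyperbolic point sits inside its time-$n$ cylinder; getting this bookkeeping right is the main obstacle.

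It then remains to estimate each summand. Writing $y=\psi_{R^{n}}(x)$, $y'=\psi_{R^{n}}(x')$ (both in the hyperbolic cylinder $R^{n}$), the backward exponential contraction along hyperbolic times (Remark~\ref{epsilon}), with rate $\kappa=\kappa(c)\in(0,1)$, gives $d(G^{j}(y),G^{j}(y'))\le\kappa^{\,n-j}d(x,x')$ for $0\le j\le n$. Since $\phi$ is $(C,\delta)$-H\"older, summing a geometric series yields
\[
\bigl|S_{n}\phi(y)-S_{n}\phi(y')\bigr|\le C\sum_{j=0}^{n-1}d\bigl(G^{j}(y),G^{j}(y')\bigr)^{\delta}\le \frac{C\kappa^{\delta}}{1-\kappa^{\delta}}\,d(x,x')^{\delta}=:C'\,d(x,x')^{\delta},
\]
while Proposition~\ref{distorcao} bounds the same quantity by $K_{0}$. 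Combining these with the elementary inequality $|e^{a}-e^{b}|\le e^{a}e^{|a-b|}|a-b|$ (with $a=S_{n}\phi(y)$, $b=S_{n}\phi(y')$) gives $\bigl|e^{S_{n}\phi(y)}-e^{S_{n}\phi(y')}\bigr|\le e^{K_{0}}C'\,d(x,x')^{\delta}\,e^{S_{n}\phi(y)}$. Summing over the relevant hyperbolic cylinders, the factors $e^{S_{n}\phi(\psi_{R^{n}}(x))}$ reassemble into $T_{n}(x)$, so $|T_{n}(x)-T_{n}(x')|\le e^{K_{0}}C'\,T_{n}(x)\,d(x,x')^{\delta}$; multiplying by $\lambda^{-n}$ and using $\lambda^{-n}T_{n}(x)\le K_{3}$ (Lemma~\ref{limitacao}) we conclude $\lambda^{-n}|T_{n}(x)-T_{n}(x')|\le e^{K_{0}}C'K_{3}\,d(x,x')^{\delta}$. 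Taking $K_{4}=\max\{\,e^{K_{0}}C'K_{3},\,2K_{3}\epsilon^{-\delta}\,\}$, which does not depend on $n$, completes the argument.
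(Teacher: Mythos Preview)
Your argument is correct and follows essentially the same route as the paper's: match up the hyperbolic cylinders contributing to $T_{n}(x)$ and $T_{n}(x')$, bound each paired term via the distortion estimate of Proposition~\ref{distorcao}, and control the resulting sum with the $K_{3}$ bound of Lemma~\ref{limitacao}. The only difference is bookkeeping: the paper dispenses with your large-distance case and your ABV $\epsilon$-ball matching by simply taking $x_{1},x_{2}$ in the same Markov rectangle $R_{i}$ (the Markov property then forces the two index sets of hyperbolic cylinders to coincide), and it bounds the sum by $Z_{n}$ rather than by $T_{n}(x)$, arriving at $K_{4}=K_{0}K_{3}$.
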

\begin{proof} 
Since the rectangles $R_i$, $i=1,2$ or $3$, are  two-by-two disjoints it is enough to consider points in the same rectangle. Let $n \in \mathbb{N} $ and $x_{1}, x_{2} \in R_i$. Then $\left| \lambda ^{-n} T_{n}(x_{1}) - \lambda^{-n}T_{n}(x_{2})\right| $ is equal to
$$ \left| \lambda^{-n} \left(  \sum_{y \in G^{-n}(x_{1})\cap R_{h}^{n}} e ^{S_{n}\phi(y)} -  \sum_{z \in G^{-n}(x_{2})\cap R_{h}^{n}} e ^{S_{n}\phi(z)} \right) \right| .$$
We group terms whose pre-images of $x_{1}$ and $x_{2}$ belong to the same hyperbolic cylinder.  Thus, if $y \in G^{-n}(x_{1})$ and $z \in G^{-n}(x_{2})$ belong to the same hyperbolic cylinder of length $n$, by the Proposition~\ref{distorcao}:
$$ \mid S_{n}\phi (y) - S_{n}\phi(z)  \mid < K_{0} d (x_{1},x_{2})^{\delta} .$$
On the other hand, we can write:
\begin{eqnarray*}
\mid e^{S_{n}\phi(y)} -  e^{S_{n}\phi(z)} \mid  &\leq & e^{max\{S_{n}\phi(y),S_{n}\phi(z) \}} \mid S_{n}\phi(y) - S_{n}\phi(z)\mid \\
                                                &\leq & e^{max\{S_{n}\phi(y),S_{n}\phi(z) \}} K_{0}d(x_{1},x_{2})^{\delta}.
\end{eqnarray*}   
By the definiton of $Z_n$:
\begin{eqnarray*}
\sum_{y \in G^{-n}(x_{1})\cap R_{h}^{n}} e ^{S_{n}\phi(y)} -  \sum_{z \in G^{-n}(x_{2})\cap R_{h}^{n}} e ^{S_{n}\phi(z)} &\leq &\\
\leq d(x_1,x_2)^{\delta}K_0 \displaystyle\sum_{z,y \in R_{h}^{n}}e^{\max \{ S_{n}\phi(y),S_{n}\phi(z) \}}
              &\leq &  d(x_1,x_2)^{\delta}K_{0} Z_{n}.\end{eqnarray*} 
Thus
 $$\mid {\lambda}^{-n}T_{n}(x_{1}) - {\lambda}^{-n}T_{n}(x_{2})\mid  \leq \lambda^{-n}Z_{n}K_{0}d(x_1,x_2)^{\delta} \leq K_{4}d(x_1, x_2)^{\delta}, $$ 
where we consider $K_3$ as in the Lemma~\ref{limitacao} and $K_4= K_{0}K_{3}$.
\end{proof}

\begin{Prop}\label{infernal}
The sequence $\mid\lambda^{-k} (\mathcal{L}_{\phi} T_{k} - T_{k+1}) \mid$ converges uniformly to zero when $k$ goes to infinity.
\end{Prop}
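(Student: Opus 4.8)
The plan is to evaluate $\mathcal{L}_{\phi}T_{k}$ explicitly and match it term by term against $T_{k+1}$, so that the difference is supported only on the cylinders at which the hyperbolicity status changes when the itinerary is lengthened by one symbol.

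First I would rewrite $\mathcal{L}_{\phi}T_{k}$. Each restriction $G_{i}$ is injective, hence $G^{k+1}$ is injective on every cylinder of length $k+1$. Writing every $y\in G^{-(k+1)}(x)$ as $y\in G^{-k}(z)$ with $z=G^{k}(y)\in G^{-1}(x)$ and using the cocycle identity $S_{k+1}\phi(y)=S_{k}\phi(y)+\phi(z)$, the definitions of $T_{k}$ and of $\mathcal{L}_{\phi}$ give
$$\mathcal{L}_{\phi}T_{k}(x)=\sum_{\substack{y\in G^{-(k+1)}(x)\\ \text{length-}k\text{ cylinder of }y\text{ hyperbolic}}}e^{S_{k+1}\phi(y)}.$$
Comparing with $T_{k+1}(x)=\sum_{y\in G^{-(k+1)}(x),\ \text{length-}(k+1)\text{ cyl.\ hyperbolic}}e^{S_{k+1}\phi(y)}$ I obtain
$$\lambda^{-k}\bigl(\mathcal{L}_{\phi}T_{k}(x)-T_{k+1}(x)\bigr)=\lambda^{-k}\!\!\sum_{y\in\mathcal{B}_{k}(x)}\!\!e^{S_{k+1}\phi(y)}\ -\ \lambda^{-k}\!\!\sum_{y\in\mathcal{C}_{k}(x)}\!\!e^{S_{k+1}\phi(y)},$$
where $\mathcal{B}_{k}(x)$ collects the preimages whose length-$k$ cylinder is hyperbolic but whose length-$(k+1)$ cylinder is not, and $\mathcal{C}_{k}(x)$ collects those for which the reverse happens. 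Thus it suffices to bound each sum and show it tends to $0$ uniformly in $x$.

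To estimate these I would use Propositions~\ref{distorcao} and \ref{Gibbs}. For $y\in\mathcal{B}_{k}(x)$ the length-$k$ cylinder $R^{k}$ of $y$ is hyperbolic, so $e^{S_{k+1}\phi(y)}=e^{\phi(G^{k}y)}e^{S_{k}\phi(y)}\le e^{\sup\phi}K_{2}\lambda^{k}\nu(R^{k})$; grouping the sum by the length-$k$ cylinder (each cylinder being mapped injectively, hence contributing at most one preimage of $x$ in each of its one-symbol extensions) and using the Pliss/record description of hyperbolic cylinders from Section~\ref{Hyperbolic times} — a hyperbolic length-$k$ cylinder admits only a bounded number of one-symbol extensions that stop being hyperbolic — one gets $\lambda^{-k}\sum_{\mathcal{B}_{k}(x)}e^{S_{k+1}\phi}\le C\,\nu(\mathcal{D}_{k})$, where $\mathcal{D}_{k}$ is the union of the hyperbolic length-$k$ cylinders that occur. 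Likewise, for $y\in\mathcal{C}_{k}(x)$ the length-$(k+1)$ cylinder $R^{k+1}$ is hyperbolic, so Proposition~\ref{Gibbs} gives $e^{S_{k+1}\phi(y)}\le K_{2}\lambda^{k+1}\nu(R^{k+1})$ and hence $\lambda^{-k}\sum_{\mathcal{C}_{k}(x)}e^{S_{k+1}\phi}\le K_{2}\lambda\,\nu(\mathcal{E}_{k})$, where $\mathcal{E}_{k}$ is the union of the length-$(k+1)$ hyperbolic cylinders whose length-$k$ truncation fails to be hyperbolic.

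The main obstacle is then to prove $\nu(\mathcal{D}_{k})\to0$ and $\nu(\mathcal{E}_{k})\to0$. A cylinder in $\mathcal{D}_{k}$ is one along whose orbit the Birkhoff sum of $\log\|DG^{-1}\|^{-1}$ attains a record precisely at the last step and then immediately drops below it, while a cylinder in $\mathcal{E}_{k}$ attains only a near-record and recovers exactly at the last step; both are constrained configurations. The idea is to control their $\nu$-mass by refining the exponential counting of Section~\ref{Hyperbolic times}: combining the exponential decay of the measure of ``bad'' cylinders (Proposition~\ref{dec exp}) with the Pliss abundance of hyperbolic times (Proposition~\ref{temp hiperbolicos} and Corollary~\ref{teta pliss}), or by exhibiting a further hyperbolic time strictly inside such a cylinder and iterating, so that $\mathcal{D}_{k}$ and $\mathcal{E}_{k}$ are each covered by families of cylinders whose total $\nu$-mass is $o(1)$. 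Feeding $\nu(\mathcal{D}_{k}),\nu(\mathcal{E}_{k})\to0$ into the two bounds above yields $\|\lambda^{-k}(\mathcal{L}_{\phi}T_{k}-T_{k+1})\|_{\infty}\to0$, which is the claim. I expect this last step — upgrading ``the boundary cylinders carry bounded $\nu$-mass'' to ``they carry vanishing $\nu$-mass'' — to be the delicate point.
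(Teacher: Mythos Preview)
Your initial decomposition is correct: after rewriting $\mathcal{L}_{\phi}T_{k}$ as a sum over $y\in G^{-(k+1)}(x)$, the difference $\mathcal{L}_{\phi}T_{k}-T_{k+1}$ is supported exactly on your two families $\mathcal{B}_{k}$ and $\mathcal{C}_{k}$.  The gap is at the step you yourself flag as ``delicate''.  The claim $\nu(\mathcal{D}_{k})\to0$ is not just unproved, it is most likely \emph{false}: $\mathcal{D}_{k}$ contains every hyperbolic length-$k$ cylinder whose one–symbol extension lands in $R_{1}$ or $R_{3}$ (since there the single–step contraction estimate $\|DG(G^{k}z)^{-1}\|^{-1}\ge e^{2c}$ can fail), and by the transition rules \emph{every} hyperbolic $R^{k}$ has such an extension.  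Hence $\mathcal{D}_{k}$ essentially coincides with $\bigcup_{R^{k}\,\text{hyp}}R^{k}$, whose $\nu$–measure has Cesàro average $\ge\theta>0$ by the argument in Lemma~\ref{limitacao}; in particular $\nu(\mathcal{D}_{k})$ does not tend to~$0$.  Your attempt to upgrade bounded mass to vanishing mass via Propositions~\ref{dec exp} and~\ref{temp hiperbolicos} cannot succeed, because those results say nothing about the \emph{boundary} of the set of hyperbolic times.  Notice also that your argument never invokes the small–variation hypothesis $\sup\phi-\inf\phi<\tfrac{\log\omega}{2}$, which is a sign that something essential is missing.

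The paper's route is cruder and avoids the conformal measure entirely.  One bounds each term of the boundary sums by $e^{(k+1)\sup\phi}$ and controls only the \emph{cardinality} of the relevant length-$(k+1)$ cylinders: the point is that a boundary cylinder has no intermediate hyperbolic prefix, so by the contrapositive of Proposition~\ref{temp hiperbolicos} its word lies in $I(\gamma,k+1)$, whence Proposition~\ref{cilindros ruins} gives at most $e^{\beta(k+1)}$ such cylinders with $\beta=\tfrac{\log\omega}{2}$.  Combining this with $\lambda\ge e^{\inf\phi+\log\omega}$ from Lemma~\ref{cota lambda} yields
\[
\bigl|\lambda^{-k}(\mathcal{L}_{\phi}T_{k}-T_{k+1})\bigr|
\;\le\; e^{\,\beta+\sup\phi}\cdot e^{-k\,[\log\omega-\beta-(\sup\phi-\inf\phi)]},
\]
and the exponent is negative precisely because of the hypothesis $\sup\phi-\inf\phi<\beta$.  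That is the mechanism you are missing: replace the Gibbs/measure estimate by the combinatorial bound on $\#I(\gamma,k+1)$ and let the small–variation assumption absorb the crude $e^{(k+1)\sup\phi}$.
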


\begin{proof}
Given $k\geq 0 $ and $x \in \Lambda$, by definition we have:
\begin{eqnarray*}
 \mathcal{L}_{\phi}T_{k}(x)& = & \displaystyle\sum_{y \in G^{-1}(x)} e ^{\phi(y)}T_{k}(y)= \\
                           & = &  \displaystyle\sum_{y \in G^{-1}(x)} e ^{\phi(y)}  \displaystyle\sum_{z \in G^{-k}(y)\cap R_{h}^{k}} e ^{S_{k}\phi(z)}.
\end{eqnarray*}
We consider the set  $A_{k+1}$ of cylinders 
$ R^{k+1}[a_0,a_1,\cdots,a_k]$  that are not  hyperbolic, such that $ R^{k}[a_1,..., a_k]$ are hyperbolic.
In other words, $A_{k+1}$ is the set of every cylinder of length $k+1$, such that considering the $k$ last symbols which generated the cylinder we have that the cylinder of length $k$ is hyperbolic but the cylinder of length $k+1$ is not hyperbolic. 

Note that there is no $0\leq i \leq k$  that is a hyperbolic time for every element of $R^{k+1}$. Otherwise, by concatenation, $R^{k+1}$ would be hyperbolic, which does not happen.
By definition of $T_{k}$:
\begin{eqnarray*}
\displaystyle\mid \lambda^{-k} (\mathcal{L}_{\phi} T_{k} - T_{k+1}) \displaystyle\mid & \leq & \displaystyle\mid \lambda^{-k}
 \displaystyle\sum _{{z \in G^{-1}(y)\cap A}\atop
           {A \in A_{k+1}} }^{}
 e^{S_{k+1}\phi(z)}\displaystyle\mid \\
                                                            & \leq & \displaystyle\mid \lambda^{-k} \cdot \# A_{k+1} \cdot e ^{(k+1)sup\phi} \displaystyle\mid.
\end{eqnarray*}

However, $ A_{k+1} \subset \{R^{k+1}\left(a_0, \cdots, a_k \right) \backslash \left(a_{0}, \cdots,a_{k}  \right) \in I\left( \gamma, k+1 \right)  \}$, where $\gamma$ is given by the Proposition~\ref{cilindros ruins}.
In fact, since no cylinder $R^{i}(a_0,..., a_i)$ is hyperbolic, the  proportion of indices $i \in \{0, \cdots k+1  \}$ such that $(a_0, \cdots, a_{k+1})$ is hyperbolic is zero, thus smaller than $\theta$. See Proposition~\ref{teta pliss}. 
Then, by the Proposition~\ref{cilindros ruins} we have:
   $$\# A_{k+1} \leq e^{ \beta(k+1)} \ \mbox{for} \ k \ \mbox{large enough}.$$
Remember that we take $\beta =\frac{\log(\omega)}{2}$.
From the Lemma~\ref{cota lambda}, we have that $\lambda \geq e^{ \inf \phi + \log \omega}$, thus:
\begin{eqnarray*}
\displaystyle\mid \lambda^{-k} (\mathcal{L}_{\phi} T_{k} - T_{k+1})\displaystyle\mid & \leq & e^{- k(\inf \phi + \log \omega)} \cdot  e^{\beta (k+1)} \cdot  e^{(k+1)\sup \phi}\\
                          &  =   & e^{-k(-\beta - (\sup \phi - \inf \phi) + \log \omega)} \cdot e^{ \beta + \sup \phi}.
\end{eqnarray*}
Since $\sup\phi - \inf \phi < \beta$ we have that the last term in the inequality converges exponentially to zero when $k$ goes to $\infty$.
\end{proof}
Recall that:
   $$ h_{n}(x) = \frac{1}{n}\sum_ {i=0}^{n-1}{\lambda} ^{-i}T_{i}(x) \quad \forall x \in \Lambda.  $$ 
According to the Lemma~\ref{Tn Holder} every function $\lambda^{-n}T_n$ is $(K_4, \delta)-$ H\"older continuous, but since $h_{n}$ is the average of the functions $\lambda^{-n}T_n$ we have that $h_n$ is also $(K_4, \delta)-$ H\"older continuous and, then, the family $(h_n)_{n\in \mathbb{N}}$ is equicontinuous.

\begin{Lema}\label{h acumulacao}   
The sequence of functions $(h_{n})_{n\in \mathbb{N}}$ has at least one accumulation point. Further, any accumulation point $h$ satisfies:
            $$\mathcal{L}_{\phi}h = \lambda h  .    $$
\end{Lema}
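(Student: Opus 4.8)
The plan is to combine an Arzel\`a--Ascoli compactness argument with the uniform estimate of Proposition~\ref{infernal}.

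First I would check that the family $(h_n)_{n\in\mathbb{N}}$ is relatively compact in $\mathcal{C}(\Lambda)$. Equicontinuity was just observed: by Lemma~\ref{Tn Holder} each $\lambda^{-i}T_i$ is $(K_4,\delta)$-H\"older continuous, and $h_n$, being an average of such functions, is $(K_4,\delta)$-H\"older continuous as well. Uniform boundedness follows from Lemma~\ref{limitacao}, since $0\le\lambda^{-i}T_i\le K_3$ for all $i$ forces $0\le h_n\le K_3$. As $\Lambda$ is compact, the Arzel\`a--Ascoli theorem yields a subsequence $(h_{n_k})$ converging uniformly to some $h\in\mathcal{C}(\Lambda)$, which proves the first assertion.

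Next I would prove that $\|\mathcal{L}_{\phi}h_n-\lambda h_n\|_{\infty}\to 0$ as $n\to\infty$. Writing $\mathcal{L}_{\phi}T_i=(\mathcal{L}_{\phi}T_i-T_{i+1})+T_{i+1}$, using linearity of $\mathcal{L}_{\phi}$, and reindexing $\frac{1}{n}\sum_{i=0}^{n-1}\lambda^{-i}T_{i+1}=\frac{\lambda}{n}\sum_{j=1}^{n}\lambda^{-j}T_j$, a telescoping cancellation against $\lambda h_n=\frac{\lambda}{n}\sum_{j=0}^{n-1}\lambda^{-j}T_j$ gives
$$\mathcal{L}_{\phi}h_n-\lambda h_n=\frac{1}{n}\sum_{i=0}^{n-1}\lambda^{-i}\bigl(\mathcal{L}_{\phi}T_i-T_{i+1}\bigr)+\frac{\lambda}{n}\bigl(\lambda^{-n}T_n-T_0\bigr).$$
The second term is $O(1/n)$ uniformly, since $\lambda^{-n}T_n$ is uniformly bounded by $K_3$ (Lemma~\ref{limitacao}) and $T_0$ is bounded. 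For the first term, Proposition~\ref{infernal} gives $\|\lambda^{-i}(\mathcal{L}_{\phi}T_i-T_{i+1})\|_{\infty}\to 0$, and the Ces\`aro mean of a real sequence tending to zero tends to zero; bounding the sup-norm of the average by the average of the sup-norms, the first term also tends uniformly to zero. Hence $\|\mathcal{L}_{\phi}h_n-\lambda h_n\|_{\infty}\to 0$.

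Finally I would pass to the limit along the subsequence. Since $\mathcal{L}_{\phi}$ is a bounded linear operator on $\mathcal{C}(\Lambda)$ --- each point of $\Lambda$ has at most two preimages, so $\|\mathcal{L}_{\phi}\psi\|_{\infty}\le 2e^{\sup\phi}\|\psi\|_{\infty}$ --- the uniform convergence $h_{n_k}\to h$ implies $\mathcal{L}_{\phi}h_{n_k}\to\mathcal{L}_{\phi}h$ and $\lambda h_{n_k}\to\lambda h$ uniformly. Together with the previous step this yields $\mathcal{L}_{\phi}h-\lambda h=\lim_k(\mathcal{L}_{\phi}h_{n_k}-\lambda h_{n_k})=0$, and the same argument applies verbatim to any accumulation point of $(h_n)$. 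The step I expect to be the main obstacle is the middle one: one has to recognize the telescoping structure hidden in the averaged sequence $h_n$, so that $\mathcal{L}_{\phi}h_n-\lambda h_n$ collapses into a boundary term of order $1/n$ together with a Ces\`aro average of exactly the quantities controlled by Proposition~\ref{infernal}, with Lemma~\ref{limitacao} ensuring the boundary term stays harmless. (Positivity of $h$, used in the sequel, is not part of this statement and will be established separately.)
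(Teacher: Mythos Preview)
Your proof is correct and follows essentially the same approach as the paper: Arzel\`a--Ascoli for compactness (via Lemmas~\ref{limitacao} and~\ref{Tn Holder}), then the same telescoping identity reducing $\mathcal{L}_{\phi}h_n-\lambda h_n$ to a boundary term controlled by Lemma~\ref{limitacao} plus a Ces\`aro average of the quantities handled by Proposition~\ref{infernal}. Your presentation is in fact slightly cleaner, proving $\|\mathcal{L}_{\phi}h_n-\lambda h_n\|_\infty\to 0$ for the full sequence before passing to the limit, whereas the paper carries out the same computation directly at the limit along the convergent subsequence.
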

\begin{proof} 
The Lemma~\ref{limitacao} ensures that the $h_{n}$ is uniformly bounded, since it is also equicontinuous, by the Ascoli-Arzelá Theorem we have existence of an accumulation point. Let $h$ denote one accumulation point:
    $$h=\displaystyle\lim_{k \to \infty} h_{n_k} .            $$
We still have to show that any accumulation point $h$ is an eigenfunction. In fact:  
\begin{eqnarray*}
\mathcal{L}_{\phi}h & = & \mathcal{L}_{\phi}(\displaystyle\lim_{k \to \infty} h_{n_k}) = \displaystyle\lim_{k \to \infty}\mathcal{L}_{\phi}( h_{n_k}) \\
                    & = & \displaystyle\lim_{k \to \infty} \frac{1}{n_k}\left[ \mathcal{L}_{\phi}T_0+ \lambda^{-1}\mathcal{L}_{\phi}T_1+\dots + \lambda^{-n_{k}+1}\mathcal{L}_{\phi}T_{n_{k}-1}\right] \\
                    & = &\displaystyle\lim_{k \to \infty} \left[ \frac{1}{n_k}\left[ \lambda (T_0+\dots+\lambda^{n_{k}-1}T_{n_{k}-1})-\lambda T_0 +\lambda^{-n_{k}+1}T_{n_k} \right] \right.\\
                    &   & \left. -\frac{1}{n_k}\sum_{i=1}^{n_k-1}{\lambda}^{-i}(\mathcal{L}_{\phi}T_{i}-T_{i+1}) \right].
\end{eqnarray*}
By the Proposition~\ref{infernal} the last term converges to zero. Besides that, the Lemma~\ref{limitacao} guarantees that $\lambda^{-n}T_{n}$ is uniformly bounded, then the correspondent term in the limit also converges to zero. Then, we have:
     $$\mathcal{L}_{\phi}h = \displaystyle\lim_{k \to \infty}  \frac{1}{n_k} [ \lambda (T_0+\dots+\lambda^{n_k-1}T_{n_k-1}) ] = \lambda h.     $$ 
\end{proof}

\begin{Lema}
There exist a constant $K_5>0$ such that
   $$Z_n \leq K_{5}T_n(x), \qquad \mbox{for every}\ n\in \mathbb{N} \ \mbox{and for every}\ x\in \Lambda.       $$
\end{Lema}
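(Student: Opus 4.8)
The plan is to compare $Z_n$ and $T_n(x)$ term by term over hyperbolic cylinders, using bounded distortion to replace the supremum of $S_n\phi$ on a hyperbolic cylinder by its value at a $G^n$–preimage of $x$. Concretely, fix $x\in\Lambda$. If a hyperbolic cylinder $R^n$ of length $n$ contains a point $y$ with $G^n(y)=x$, that point is unique (on a hyperbolic cylinder $G^n$ is an injective branch), and by Proposition~\ref{distorcao} one has $\sup\{S_n\phi(p):p\in R^n\}\le S_n\phi(y)+K_0$. Summing over the hyperbolic cylinders $R^n$ with $x\in G^n(R^n)$ then yields
$$\sum_{\substack{R^n\ \mathrm{hyperbolic}\\ x\in G^n(R^n)}} e^{\sup\{S_n\phi(p)\,:\,p\in R^n\}}\ \le\ e^{K_0}\,T_n(x),$$
so the Lemma will follow once we show that these cylinders already carry a definite fraction of the whole sum: there is $\kappa>0$, independent of $n$ and $x$, with $\kappa Z_n\le\sum_{R^n\ \mathrm{hyperbolic},\ x\in G^n(R^n)}e^{\sup\{S_n\phi(p):p\in R^n\}}$, and then $K_5=e^{K_0}/\kappa$ works.

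For this last point I would group the hyperbolic cylinders $\mathcal R^n(a_0,\dots,a_{n-1})$ according to their last symbol $a_{n-1}$: by the Markov/covering property of $G$, the cylinder $\mathcal R^n$ meets $G^{-n}(x)$ precisely when $a_{n-1}$ is a predecessor of the index $b$ for which $x\in R_b$. So it suffices to prove that, uniformly in $b$, the hyperbolic cylinders of length $n$ whose last symbol precedes $b$ carry a fixed proportion of $Z_n$. This is where the primitivity of the transition matrix $A$ enters ($A^3$ has all entries positive, so one may fix $N=3$ with $A^N>0$): given an arbitrary hyperbolic cylinder of length $n$, one modifies a block of bounded length $N$ in the word into another admissible block, keeping the word admissible and its cylinder hyperbolic, and arranging that the resulting cylinder meets $G^{-n}(x)$. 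The modification changes each partial sum entering the definition of a hyperbolic time, and the associated Birkhoff sum, by an amount bounded in terms of $N$, $c$ and $\sup|\phi|$ only, so the supremum of $S_n\phi$ over the new cylinder differs from that over the old one by a bounded factor; since the resulting assignment is boundedly-many-to-one, summing the comparison over all hyperbolic cylinders of length $n$ gives the bound $\kappa Z_n$, with $\kappa$ depending only on $N$, $c$ and $\sup|\phi|$.

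I expect the delicate part to be exactly this redistribution. Since $G$ is not a full–branch expanding map — its degree is not constant, and hyperbolic cylinders are constrained by demanding expansion at every iterate — an arbitrary hyperbolic cylinder need not contain a preimage of a prescribed $x$, so one cannot simply sum distortion estimates over all of $Z_n$; one must instead exploit the primitivity of the subshift, together with the uniform distortion of Proposition~\ref{distorcao}, to transfer a uniform fraction of the mass of $Z_n$ onto the cylinders that do see $x$. The technical cost lies in preserving hyperbolicity and controlling the multiplicity of the assignment under the bounded block modification.
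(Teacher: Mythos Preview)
Your redistribution step has a structural obstruction in this system. The hyperbolic-time inequality at $k=n-1$ reads, in the symbolic form behind Proposition~\ref{temp hiperbolicos}, $b_{a_{n-1}}\ge 2c$; since $b_1,b_3\le -1<0<2c$ while $b_2=\log\sigma^{-1}>0$, every hyperbolic cylinder necessarily ends with the symbol $2$. Any terminal-block modification that forces $a_{n-1}\neq 2$ therefore violates the $k=n-1$ condition outright. Your remark that each partial sum moves only by a bounded amount is correct but does not help: for $k$ close to $n$ the relevant sums have $O(1)$ terms, and a bounded perturbation can (and here does) flip the sign of the inequality. Since the only successor of the symbol $2$ is $3$, this means that for $x\in R_1\cup R_2$ \emph{no} hyperbolic cylinder of length $n$ contains a $G^n$-preimage of $x$, and your bounded-block modification cannot manufacture one while preserving hyperbolicity; the constant $\kappa$ you seek is zero there.

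The paper's route is different: rather than staying at length $n$, it fixes $m$ larger than the mixing time of the subshift and passes through $T_{n+m}(x)$. For each hyperbolic $R^n$ one uses the covering property of $G^m$ to find $y\in R^n$ with $G^{n+m}(y)=x$, and distortion plus the trivial bound $S_n\phi(y)\le S_{n+m}\phi(y)-m\inf\phi$ yield $Z_n\le e^{K_0-m\inf\phi}\,T_{n+m}(x)$. The return $T_{n+m}(x)\le C(m)\,T_n(x)$ rests on the observation that if $n+m$ is a hyperbolic time for $y$ then $n$ is automatically a hyperbolic time for $z=G^m(y)\in G^{-n}(x)$: deleting $m$ symbols from the \emph{front} of a hyperbolic word leaves all the tail inequalities intact. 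The contrast with your scheme is that the paper extends forward and then truncates from the left---operations compatible with the one-sided nature of the hyperbolic-time condition---instead of altering the terminal block, which is not.
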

\begin{proof}
Let $x\in \Lambda$ and $n\geq 1$, and fix $m>3$. Note that every rectangle $R_{i}$, $i=1,2$ or $3$, cover $\Lambda$ after three iterations. Let $y,z \in G^{-n}(x) \cap R_h^n$, by  Proposition~\ref{distorcao}, since  $y,z$ belong to the same hyperbolic cylinder, we have:
      $$e^{S_{n}\phi(y)} \leq e^{ K_0}e^{ S_{n}\phi(z)}.$$
On the other hand, one can write $$S_{n}\phi(y) = S_{n+m}\phi (y) - S_{m}\phi(G^{n}(y)) \leq S_{n+m}\phi (y) - m \cdot\inf\phi .$$
If we fix $y$ and take the supremum over every $w$ in the hyperbolic cylinder $R^{n}$ we get:
     $$e^{\sup S_{n}\phi|_ {R^{n}}} \leq e^{K_0} e^{-m \cdot \inf \phi}e^{S_{n+m}\phi(y)}.  $$
Note that each cylinder of length $n+m$ is, naturally, included in a cylinder of length $n$. Adding over every hyperbolic cylinder of length $n$, we obtain:
        $$Z_{n}\leq e^{ K_0} e^{-m \cdot \inf \phi}\displaystyle\sum e^{S_{n+m}\phi(y)} \leq e^{ K_0} e^{-m \cdot \inf \phi} T_{n+m}(x) . $$
Comparing $T_{n+m}(x)$ and $T_n(x)$, we have       
\begin{eqnarray*}          
T_{n+m}(x) & =   & \displaystyle\sum_{y \in G^{-(n+m)}(x)\cap R_{h}^{n + m}}e^{S_ {n+m}\phi(y)}\\
           &\leq & \displaystyle\sum_{y \in G^{-(n+m)}(x)\cap R_{h}^{n + m}}  e^{S_{n}\phi(G^{m}(y)) + S_{m}\phi(y)}\\
           &\leq & \log(\omega)e^{m \sup \phi}  \displaystyle\sum_{z \in G^{-(n)}(x)\cap R_{h}^{n}} e^{S_{n}\phi(z)}\\
           &\leq &  \log(\omega)e^{m \sup \phi} T_{n}(x) .       
\end{eqnarray*} 
Let $K_5 = \log(\omega)e^{K_0 +m (\sup \phi - \inf \phi)}$.
Thus 
       $$ Z_{n} \leq K_5 T_{n}(x),  \quad \mbox{for all} \ x \in \Lambda.        $$
\end{proof}

Using Lemma~\ref{limitacao}, we have
$$K_3^{-1} \leq \frac{1}{n} \sum_{j=0}^{n-1} \lambda^{-j}Z_j  \leq  K_5 \frac{1}{n} \sum_{j=0}^{n-1} \lambda^{-j}T_j(x) =K_5 h_n(x). $$
We just proved the next lemma:   
\begin{Lema} \label{h positivo}
The eigenfunction of the Transfer Operator obtained as a accumulation point of the sequence $(h_{n})_{n \in \mathbb{N}}$ satisfies $h > 0$.       
\end{Lema}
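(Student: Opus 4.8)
The plan is to observe that the chain of inequalities displayed immediately before the statement already yields a positive lower bound for the functions $h_n$ that is \emph{uniform} in both the point and the index, and then to check that this bound survives passage to an accumulation point.

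First I would assemble the ingredients. By Lemma~\ref{limitacao} (whose proof uses the density of hyperbolic times from Corollary~\ref{teta pliss}) there is $K_3>0$ with $K_3^{-1}\le \frac1n\sum_{j=0}^{n-1}\lambda^{-j}Z_j$ for all sufficiently large $n$. Combining this with the previous lemma, which gives $Z_j\le K_5\,T_j(x)$ for every $j\in\mathbb N$ and every $x\in\Lambda$, and with the definition $h_n(x)=\frac1n\sum_{j=0}^{n-1}\lambda^{-j}T_j(x)$, one obtains
\[
\frac{1}{K_3K_5}\ \le\ h_n(x)\qquad\text{for all }x\in\Lambda\text{ and all }n\text{ large enough}.
\]
The essential point is that the left-hand bound depends neither on $x$ nor on $n$.

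Next I would invoke Lemma~\ref{h acumulacao}: the accumulation point $h$ is, by Ascoli--Arzel\'a, a \emph{uniform} limit $h=\lim_{k\to\infty}h_{n_k}$ along some subsequence. Evaluating the inequality above along $n=n_k$ and letting $k\to\infty$ gives $h(x)\ge \frac{1}{K_3K_5}$ for every $x\in\Lambda$. Hence $h$ is bounded away from zero, in particular strictly positive, which is precisely the claim.

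I do not expect any genuine obstacle here: the real work was done upstream, in proving the bound $Z_n\le K_5T_n(x)$ (which relies on the bounded distortion Proposition~\ref{distorcao} and on the rectangles covering $\Lambda$ after three iterates) and in the Pliss-type lower bound for the Ces\`aro averages of $\lambda^{-j}Z_j$. The only mild care needed is that the lower bound in Lemma~\ref{limitacao} holds only for large $n$, which is harmless since we pass to the limit along $n_k\to\infty$.
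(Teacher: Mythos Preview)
Your proposal is correct and coincides with the paper's argument: the paper simply records the chain $K_3^{-1}\le \frac1n\sum_{j=0}^{n-1}\lambda^{-j}Z_j\le K_5\,h_n(x)$ immediately before the lemma and declares it proved. Your version is slightly more explicit about passing to the uniform limit along the Ascoli--Arzel\`a subsequence, which is a harmless elaboration of the same idea.
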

We obtained, in this section, an eigenfunction for the Transfer Operator, and an eigenmeasure for its dual, both associated with the same eigenvalue, and it finishes the proof of Theorem~\ref{Teorema B}. Moreover we prove that the eigenfunction is positive. 
%%%%%%%%%%%%%%%%%%%%%%%%%%%%%%%%%%%%%%%%%%%%%%%%%%%%%%%%%%%%%%%%%%%%%%%%%%%%%%%%%%%%%%%%
%%%%%%%%%%%%%%%%%%%%%%%%%%%%%%%%%%%%%%%%%%%%%%%%%%%%%%%%%%%%%%%%%%%%%%%%%%%%%%%%%%%%%%%%%%%%%%%%%%%%%%%%%%%%%%%%
%%%%%%%%%%%%%%%%%%%%%%%%%%%%%%%%%%%%%%%%%%%%%%%%%%%%%%%%%%%%%%%%%%%%%%%%%%%%%%%%%%%%%%%%%%%%%%%%%%%%%%%%%%%%%%%%
%%%%%%%%%%%%%%%%%%%%%%%%%%%%   EXISTENCIA                       %%%%%%%%%%%%%%%%%%%%%%%%%%%%%%%%%%%%%%%%%%%%%%%%
%%%%%%%%%%%%%%%%%%%%%%%%%%%%                                    %%%%%%%%%%%%%%%%%%%%%%%%%%%%%%%%%%%%%%%%%%%%%%%%
%%%%%%%%%%%%%%%%%%%%%%%%%%%%%%%%%%%%%%%%%%%%%%%%%%%%%%%%%%%%%%%%%%%%%%%%%%%%%%%%%%%%%%%%%%%%%%%%%%%%%%%%%%%%%%%%
%%%%%%%%%%%%%%%%%%%%%%%%%%%%%%%%%%%%%%%%%%%%%%%%%%%%%%%%%%%%%%%%%%%%%%%%%%%%%%%%%%%%%%%%%%%%%%%%%%%%%%%%%%%%%%%%

\section{Existence of equilibrium states}\label{Existence2}
In this section we consider the probability measure $\mu:= h \nu$, where $\nu $ is the reference measure from the previous section. More precisely, $\mu$ is defined in every Borelian $A$  by: 
            $$\mu(A) = \displaystyle\int _{A} h \ d\nu .              $$ 
Since $h>0$, the measures $\mu$ and $\nu$ are equivalent.            
We will prove $\mu$ is an equilibrium state for $G$ with respect to the potential $\phi$.  Some of the statements and proofs are analogous to the ones in  \cite{OV05} and \cite{LOR}, and we adapt them to our context, omitting some of the proofs.

%\begin{Lema}
%The probability measure $\mu$ is equivalent to $\nu$.
%\end{Lema}
%\begin{proof}
%In fact, $\mu << \nu$, for given a Borelian $A$ satisfying $\nu(A)= 0$ we have:
%$$\mu (A) = \displaystyle\int_{A} h \ d\mu \leq \nu(A) \cdot \sup h = 0.$$
%On the other hand, if $B$ is a Borelian with $\mu(B)=0$, so
%             $$ \displaystyle\int_{B}h \ d\nu =0    .$$
%But by the Lemma~\ref{h positivo} , $h > 0$, then $\nu(B)=0$.           
%\end{proof}

\begin{Lema}\label{med invariante}
The probability measure $\mu$ is invariant by $G$.
\end{Lema}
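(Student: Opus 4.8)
The strategy is the classical one: verify that $\int \psi\circ G\, d\mu = \int \psi\, d\mu$ for every $\psi\in\mathcal{C}(\Lambda)$, which by the Riesz representation theorem is equivalent to $G$-invariance of $\mu$. The only inputs needed are the two eigenrelations from Theorem~\ref{Teorema B}, namely $\mathcal{L}_\phi h=\lambda h$ and $\mathcal{L}_\phi^{\ast}\nu=\lambda\nu$, together with positivity of $h$ (Lemma~\ref{h positivo}). First I would fix the normalization: since $h>0$ and $\nu$ is a probability measure, $\int h\,d\nu>0$, so after rescaling $h$ by this constant (which preserves the eigenrelation) we may assume $\int h\,d\nu=1$, making $\mu=h\nu$ a genuine probability measure.

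The computational heart is the identity
\begin{equation*}
\mathcal{L}_\phi\big((\psi\circ G)\cdot h\big)(x)=\sum_{y\in G^{-1}(x)}e^{\phi(y)}\,\psi(G(y))\,h(y)=\psi(x)\sum_{y\in G^{-1}(x)}e^{\phi(y)}h(y)=\psi(x)\,\mathcal{L}_\phi h(x)=\lambda\,\psi(x)\,h(x),
\end{equation*}
where the second equality uses that $G(y)=x$ for every $y\in G^{-1}(x)$, so the factor $\psi\circ G$ is constant along the fiber and pulls out of the sum. With this in hand,
\begin{equation*}
\int \psi\circ G\, d\mu=\int (\psi\circ G)\,h\, d\nu=\frac{1}{\lambda}\int \mathcal{L}_\phi\big((\psi\circ G)h\big)\, d\mathcal{L}_\phi^{\ast}\nu\cdot\frac{1}{1}
\end{equation*}
is not quite the cleanest route; instead I would write $\int(\psi\circ G)h\,d\nu = \frac1\lambda\int (\psi\circ G)h\,d(\mathcal{L}_\phi^{\ast}\nu)=\frac1\lambda\int \mathcal{L}_\phi\big((\psi\circ G)h\big)\,d\nu=\frac1\lambda\int \lambda\,\psi h\,d\nu=\int\psi\,d\mu$, using the defining property of the dual operator in the second step and the displayed identity in the third. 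This gives $\int\psi\circ G\,d\mu=\int\psi\,d\mu$ for all continuous $\psi$, hence $G_{\ast}\mu=\mu$.

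I do not expect a serious obstacle here; the statement is essentially a formal consequence of the eigenrelations. The only points requiring a line of care are: (i) justifying that $\psi\circ G$ factors out of $\mathcal{L}_\phi$ (true because the sum in the definition of the transfer operator is over the fiber $G^{-1}(x)$, on which $\psi\circ G\equiv\psi(x)$), and (ii) passing from equality of integrals of continuous functions to equality of the Borel measures $G_{\ast}\mu$ and $\mu$, which is standard since $\Lambda$ is a compact metric space. One should also record that $\mu(\Lambda)=\int h\,d\nu=1$ with the chosen normalization, so $\mu$ is indeed a probability measure.
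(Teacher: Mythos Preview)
Your proof is correct and follows essentially the same route as the paper: both verify $\int \psi\circ G\,d\mu=\int\psi\,d\mu$ for continuous $\psi$ via the chain $\int(\psi\circ G)h\,d\nu=\lambda^{-1}\int(\psi\circ G)h\,d\mathcal{L}_\phi^{\ast}\nu=\lambda^{-1}\int\mathcal{L}_\phi((\psi\circ G)h)\,d\nu=\lambda^{-1}\int\psi\,\mathcal{L}_\phi h\,d\nu=\int\psi h\,d\nu$. Your write-up is in fact slightly more careful than the paper's, since you make explicit why $\psi\circ G$ pulls out of $\mathcal{L}_\phi$ and you address the normalization $\int h\,d\nu=1$, which the paper leaves implicit.
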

\begin{proof}
We will show that for every continuous function $\psi \in \mathcal{C}^{0}(\Lambda)$ we have: 
             $$\int \psi \circ G \ d\mu = \int \psi \ d \mu .         $$
In fact:
\begin{eqnarray}\label{mu inv}
\int \psi \circ G \ d\mu & = & \int( \psi \circ G) \cdot h \ d\nu  \label{mu inv 1} \\
                         & = & \frac{1}{\lambda}\int( \psi \circ G) \cdot h \ d\mathcal{L}^{\ast}\nu   \label{mu inv 2} \\
                         & = & \frac{1}{\lambda}\int \mathcal{L}_{\phi}\left[( \psi \circ G) \cdot h \right] \ d\nu  \label{mu inv 3} \\
                         & = & \frac{1}{\lambda}\int \psi \cdot \mathcal{L}_{\phi}(h) \ d\nu  \label{mu inv 4} \\
                         & = &  \int \psi \cdot h \ d \nu = \int \psi \ d\mu \label{mu inv 5}  
\end{eqnarray}            
Note that \eqref{mu inv 2} and \eqref{mu inv 3} follows from the fact that $\nu$ is an eigenmeasure and $h$ is an eigenfunction. We get \eqref{mu inv 4} from the definition of the Transfer operator. To finish, \eqref{mu inv 5} follow again from the fact that $h$ is an eigenfunction and by the definition of $\mu$.           
\end{proof}

We say that a sequence $(a_j)$ of integers is {\it non-lanunary } if it is increasing and satisfies $\frac{a_{j+1}}{a_{j}} \to 1$.

\begin{Def}
An invariant probability measure $\eta$ is said to be a non-lacunary Gibbs measure if there exist a constant K> 0 and a real number $P$ satisfying
$$K^{-1} \leq \frac{\eta(R^{a_j}(x))}{\exp(S_{a_j}\phi(x) - P{a_j})}\leq K, $$
for $\eta$ almost every $x$  and for a non-lacunary sequence $a_j= a_j(x)$, where $R^{a_j}(x)$ is the cylinder of length $a_j$ contaning $x$.
\end{Def}

\begin{Prop}\label{seq nao-lacunar}
The sequence of hyperbolic times $n_k(x)$ is a non-lacunary sequence for $\mu$ almost every $x \in \Lambda$. 
\end{Prop}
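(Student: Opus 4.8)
The goal is to show that for $\mu$-a.e. $x$, the sequence of hyperbolic times $n_k(x)$ satisfies $n_{k+1}(x)/n_k(x) \to 1$. Since $\mu$ and $\nu$ are equivalent, it suffices to prove this for $\nu$-a.e. $x$. The plan is to control the \emph{gaps} $n_{k+1}(x) - n_k(x)$ relative to $n_k(x)$ using the integrability of the first hyperbolic time together with ergodicity-type counting via Corollary~\ref{teta pliss}.

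First I would recall that Corollary~\ref{teta pliss} gives, for $\nu$-a.e. $x$ and all large $n$, the bound $\#\{k \geq 1 : n_k(x) \leq n\} \geq \theta n$. This means hyperbolic times occur with positive frequency. Next, I would estimate the size of gaps: if $n_{k+1}(x) - n_k(x)$ were comparable to $n_k(x)$ for infinitely many $k$ on a positive $\nu$-measure set, then there would be a positive-measure set of points experiencing a ``long gap'' without hyperbolic times — but this contradicts the positive-frequency conclusion of Corollary~\ref{teta pliss}. More precisely, I would set up the following: for each $N$ and each $x$, let $G_N(x)$ be the largest gap $n_{k+1}(x) - n_k(x)$ among hyperbolic times $n_k(x) \le N$. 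If the sequence were lacunary on a positive-measure set, then along a subsequence $N \to \infty$ we would have $G_N(x)/N$ bounded below, forcing $\#\{k : n_k(x) \le N\} \le (1-\delta)N/G_N^{\min} \cdot (\text{something})$, eventually beating the lower bound $\theta N$ — a contradiction.

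A cleaner route exploits the dynamical structure: the function $n_1$ (first hyperbolic time) is $\nu$-integrable by Proposition~\ref{H total}. One shows that the $(k{+}1)$-st hyperbolic time gap from the $k$-th is controlled by $n_1$ evaluated at the appropriate iterate; that is, $n_{k+1}(x) - n_k(x) \leq n_1(G^{n_k(x)}(x)) $ up to the subtlety that hyperbolicity is a property of cylinders rather than points, handled via the fact that $G^{n_k(x)}$ maps $R^{n_k}_h$ onto a union of full rectangles (see the three-iteration covering argument already used). Then, if $\nu$ were $G$-invariant one would apply Birkhoff's ergodic theorem to $n_1$ to get $\frac{1}{m}\sum_{j=0}^{m-1} n_1(G^j x) \to \int n_1\, d\bar\nu < \infty$, which forces the gaps to be $o(n_k)$. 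Since we actually have the \emph{equivalent} invariant measure $\mu = h\nu$ with $h$ bounded above and below (Lemmas~\ref{limitacao}, \ref{h positivo}, and boundedness of $h$), one transfers: apply Birkhoff to $\mu$ using that $n_1 \in L^1(\mu)$ (because $h$ is bounded and $n_1 \in L^1(\nu)$). From $\frac1m \sum_{j<m} n_1(G^j x) \to \int n_1 \, d\mu$ finite $\mu$-a.e., and from the gap domination $n_{k+1}(x)-n_k(x) \le \sum_{j = n_k(x)}^{n_{k+1}(x)-1} n_1(G^j x)$-type estimates combined with Corollary~\ref{teta pliss} (which says $n_k(x) \le n_{\lceil k/\theta\rceil}$... actually that there are $\ge \theta n$ of them up to $n$, so $n_k(x) \le k/\theta$ eventually), one concludes $\frac{n_{k+1}(x) - n_k(x)}{n_k(x)} \to 0$.

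The main obstacle I expect is the precise justification that $n_{k+1}(x) - n_k(x)$ can be dominated by a Birkhoff-summable quantity — because ``hyperbolic time'' is defined as a property of cylinders $R^n_h$, not directly of orbits of points, so the naive composition/concatenation statement ``$n_k(x)$ is hyperbolic and then the next hyperbolic time of $G^{n_k(x)}(x)$ gives $n_{k+1}(x)$'' requires care. One must verify that concatenation of a hyperbolic cylinder with another hyperbolic cylinder yields a hyperbolic cylinder (this is essentially the sub-additivity of the defining product inequality, already implicit in the proof of Proposition~\ref{temp hiperbolicos}), so that $n_k(x) + (\text{hyperbolic time of the image piece})$ is again a hyperbolic time for $x$. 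Once that concatenation lemma is in hand, the domination $n_{k+1}(x) - n_k(x) \le n_1\big(G^{n_k(x)}(x)\big)$ (interpreted at the level of cylinders) follows, and the ergodic-theorem argument closes the proof. I would therefore structure the proof as: (1) concatenation lemma for hyperbolic cylinders; (2) gap domination by $n_1 \circ G^{n_k}$; (3) $n_1 \in L^1(\mu)$ and Birkhoff gives $\frac{1}{m}\sum_{j<m} n_1 \circ G^j$ converges, hence $n_1(G^m x) = o(m)$ $\mu$-a.e. (Cesàro convergence of nonnegative terms forces the terms themselves to be $o(m)$ along the full sequence after a standard argument, or at least gives what is needed); (4) combine with $n_k(x) \gtrsim \theta^{-1} k \to \infty$ to get $(n_{k+1}-n_k)/n_k \to 0$.
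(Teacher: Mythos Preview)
Your proposal is correct and follows essentially the same route as the paper, which simply records that $n_1\in L^1(\nu)$ (Proposition~\ref{H total}), transfers this to $n_1\in L^1(\mu)$ via the boundedness of $h$, notes that $\mu$ is invariant with $\mu(\mathcal{H})=1$, and then invokes Proposition~3.8 of \cite{OV08}; your steps (1)--(4) are precisely the content of that cited result (concatenation of hyperbolic times gives $n_{k+1}(x)-n_k(x)\le n_1(G^{n_k(x)}(x))$, and Birkhoff applied to $n_1\in L^1(\mu)$ yields $n_1(G^m x)=o(m)$, hence $(n_{k+1}-n_k)/n_k\to 0$). Your hesitation in step~(3) is unnecessary: if $a_j\ge 0$ and $\tfrac1m\sum_{j<m}a_j$ converges, then writing $a_m$ as the difference of two consecutive partial sums shows $a_m=o(m)$ directly.
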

\begin{proof}
According to the Proposition~\ref{H total}  we have that $n_{1}$ is  $\nu$-integrable. Since $h$ is bounded, $n_{1}$ is $\mu$-integrable. Besides, $\mu$ is invariant and $\mu(\mathcal{H})= 1$, by the Proposition~\ref{H total}. Following Proposition 3.8 in \cite{OV08}, we have that $n_{k}(x)$ is non-lacunary for $\mu$ almost every point. 
\end{proof}

 \begin{Prop}\label{mu Gibbs}
The probability $\mu$ is an invariant non-lacunary Gibbs measure.
\end{Prop}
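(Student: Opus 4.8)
The plan is to assemble this from results already in hand: the Gibbs-type estimate for the reference measure $\nu$ on hyperbolic cylinders (Proposition~\ref{Gibbs}), the non-lacunarity of the sequence of hyperbolic times (Proposition~\ref{seq nao-lacunar}), and the fact that the density $h=d\mu/d\nu$ is pinched between two positive constants. Invariance of $\mu$ is already Lemma~\ref{med invariante}, so only the Gibbs bound along a non-lacunary sequence remains, and we will obtain it with $P=\log\lambda$.

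\textbf{Density bounds.} Since $h$ is continuous on the compact space $\Lambda$ and strictly positive by Lemma~\ref{h positivo}, there are constants $0<c_{1}\le c_{2}$ with $c_{1}\le h(x)\le c_{2}$ for every $x\in\Lambda$. Hence, for every Borel set $A$,
$$c_{1}\,\nu(A)\;\le\;\mu(A)=\int_{A}h\,d\nu\;\le\;c_{2}\,\nu(A).$$
Moreover $\mu$ and $\nu$ are equivalent, so Proposition~\ref{H total} gives $\mu(\mathcal{H})=1$.

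\textbf{Conclusion.} Fix a point $x$ in the full $\mu$-measure set on which the conclusion of Proposition~\ref{seq nao-lacunar} holds, and let $n_{k}=n_{k}(x)$ be its sequence of hyperbolic times; this sequence is non-lacunary. For each $k$, the set $R^{n_{k}}(x)$ is well defined (the rectangles $R_{1},R_{2},R_{3}$ are pairwise disjoint, so the cylinder of length $n_{k}$ containing $x$ is unique) and it is a hyperbolic cylinder, so Proposition~\ref{Gibbs} applies at the point $x$ itself and yields
$$K_{2}^{-1}\;\le\;\frac{\nu\big(R^{n_{k}}(x)\big)}{\exp\big(S_{n_{k}}\phi(x)-Pn_{k}\big)}\;\le\;K_{2},\qquad P=\log\lambda.$$
Combining this with the density bounds of the previous paragraph gives
$$\frac{1}{K}\;\le\;\frac{\mu\big(R^{n_{k}}(x)\big)}{\exp\big(S_{n_{k}}\phi(x)-Pn_{k}\big)}\;\le\;K,\qquad K:=\max\{c_{2}K_{2},\,c_{1}^{-1}K_{2}\},$$
for all $k$. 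Since $(n_{k})$ is non-lacunary and this holds for $\mu$-almost every $x$, the measure $\mu$ is by definition a non-lacunary Gibbs measure, with constant $K$ and with $P=\log\lambda$. Together with Lemma~\ref{med invariante}, this proves the proposition.

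\textbf{Where the work really sits.} Within this proposition there is no serious obstacle: the only point demanding care is that the Gibbs estimate must be read at the \emph{same} point $x$ that lies in the cylinder, which is legitimate because Proposition~\ref{distorcao} controls the oscillation of $S_{n_{k}}\phi$ over a hyperbolic cylinder. The substantive difficulties were already absorbed earlier — in proving the two-sided estimate for $\nu$ on hyperbolic cylinders (which rests on the bounded-distortion Proposition~\ref{distorcao} and the conformality $J_{\nu}G=\lambda e^{-\phi}$ of Proposition~\ref{jacobiano}), and in establishing non-lacunarity of the hyperbolic-time sequence (integrability of the first hyperbolic time from Proposition~\ref{H total}, together with the positive-density-of-hyperbolic-times bound of Corollary~\ref{teta pliss}).
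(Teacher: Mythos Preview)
Your proof is correct and follows essentially the same route as the paper's: combine the two-sided bound $c_{1}\le h\le c_{2}$ with Proposition~\ref{Gibbs} to pass the Gibbs estimate from $\nu$ to $\mu$ on hyperbolic cylinders, then invoke Proposition~\ref{seq nao-lacunar} for non-lacunarity of the hyperbolic-time sequence. Your write-up is simply a more detailed version of the paper's terse argument, with the additional commentary on where the real difficulties were already absorbed.
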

\begin{proof}
Combining the Proposition~\ref{Gibbs} and the fact that $h$ is bounded, both from below and above, we have that there exists $K>0$ satisfying:
$$K^{-1} \leq \frac{\mu(R^n)}{\exp(S_{n}\phi(x) - Pn)}\leq K, $$
for every hyperbolic cylinder $R^{n}$  and every $x \in R^n$.
By the Proposition~\ref{seq nao-lacunar}, the sequence of hyperbolic times is a non-lacunary sequence, and then, the proposition is proved.
\end{proof}

\begin{Lema}\label{jacobiano de mu}
The Jacobian of $\mu$ with respect to $G$ is given by:
    $$J_{\mu}G = \frac{ h \circ G}{h} J_{\nu}G  . $$
In particular,  $J_{\mu}G = \lambda \frac{h \circ G}{h} e^{- \phi}$.
\end{Lema}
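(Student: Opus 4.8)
The plan is to establish the formula $J_{\mu}G = \frac{h\circ G}{h}\,J_{\nu}G$ directly from the definition of Jacobian via the topological characterization used in the proof of Proposition~\ref{regra da cadeia}. Recall that $\mu = h\nu$ with $h>0$, so $\mu$ and $\nu$ are equivalent, and by Proposition~\ref{jacobiano} we already know $J_{\nu}G = \lambda e^{-\phi}$. Thus the second, explicit assertion $J_{\mu}G = \lambda\frac{h\circ G}{h}e^{-\phi}$ follows at once from the first, so the whole content is the first identity.

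First I would fix an open set $U$ on which $G$ is injective and a continuous $\psi$ with $\mathrm{supp}(\psi)\subset G(U)$, and compute $\int_{G(U)}\psi\,d\mu$. Writing $d\mu = h\,d\nu$, this is $\int_{G(U)}(\psi h)\,d\nu$. Now I would like to apply the defining property of $J_{\nu}G$, but the test function $\psi h$ has support in $G(U)$ (since $\mathrm{supp}(\psi h)\subset\mathrm{supp}(\psi)\subset G(U)$), so indeed
\begin{equation*}
\int_{G(U)}(\psi h)\,d\nu = \int_{U}\big((\psi h)\circ G\big)\,J_{\nu}G\,d\nu = \int_{U}(\psi\circ G)\,(h\circ G)\,J_{\nu}G\,d\nu.
\end{equation*}
To turn the right-hand side back into an integral against $\mu$, I rewrite $(h\circ G)\,J_{\nu}G = \frac{h\circ G}{h}\,J_{\nu}G\cdot h$ and absorb the factor $h$ into $d\mu$, obtaining $\int_{U}(\psi\circ G)\,\frac{h\circ G}{h}\,J_{\nu}G\,d\mu$. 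Comparing with the defining property of $J_{\mu}G$, namely $\int_{G(U)}\psi\,d\mu = \int_{U}(\psi\circ G)\,J_{\mu}G\,d\mu$, and using that $\psi$, $U$ were arbitrary (so the two expressions for $\int_{G(U)}\psi\,d\mu$ agree for all admissible $\psi$), essential uniqueness of the Jacobian gives $J_{\mu}G = \frac{h\circ G}{h}\,J_{\nu}G$ $\mu$-almost everywhere. Finally I substitute $J_{\nu}G = \lambda e^{-\phi}$ from Proposition~\ref{jacobiano} to get $J_{\mu}G = \lambda\frac{h\circ G}{h}e^{-\phi}$.

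The only delicate point — and the step I would present most carefully — is the passage from ``the integral identity holds for every continuous $\psi$ supported in $G(U)$ and every injectivity domain $U$'' to ``$J_{\mu}G$ equals this explicit expression a.e.''; this is exactly the essential-uniqueness statement for Jacobians recorded after Definition~\ref{def jacobiano}, together with the fact that $\frac{h\circ G}{h}J_{\nu}G$ is itself a legitimate (nonnegative, since $h>0$) candidate Jacobian. One should also note that $h$ is bounded away from $0$ and $\infty$ (it is positive and continuous on the compact set $\Lambda$, and bounded below by Lemma~\ref{h positivo} together with Lemma~\ref{limitacao}), so $\frac{h\circ G}{h}$ is well-defined and bounded, and all the integrals above are finite; no genuine obstacle arises beyond this bookkeeping.
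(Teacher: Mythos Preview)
Your proof is correct and takes a genuinely different, more elementary route than the paper. You work directly from the change-of-variables characterization of the Jacobian: starting from $\int_{G(U)}\psi\,d\mu=\int_{G(U)}(\psi h)\,d\nu$, you apply the defining property of $J_{\nu}G$ to the continuous test function $\psi h$, then divide and multiply by $h$ to pass back to $d\mu$. The paper instead goes through the transfer operator: it approximates $\chi_A$ by continuous $\psi_n$, uses the identity $\mathcal{L}_{\phi}(g)\cdot h=\mathcal{L}_{\phi}((h\circ G)\,g)$, and invokes the eigenmeasure relation $\mathcal{L}_{\phi}^{\ast}\nu=\lambda\nu$ together with dominated convergence to reach $\mu(G(A))=\int_A \lambda\frac{h\circ G}{h}e^{-\phi}\,d\mu$. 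Your argument is shorter and uses only the abstract Jacobian formalism (plus the continuity and positivity of $h$), never touching $\mathcal{L}_{\phi}$ or $\lambda$ until the final substitution $J_{\nu}G=\lambda e^{-\phi}$; the paper's computation, while longer, makes the role of the spectral data explicit throughout. Both are valid; yours is the cleaner derivation of the general identity $J_{h\nu}G=\frac{h\circ G}{h}J_{\nu}G$.
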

\begin{proof}

Given a  Borelian $A$ where $G$ is injective, we have to show that: 
    $$\mu\big(G(A)\big)= \int_{A} \frac{h \circ G}{h} J_{\nu}G . $$
According to Proposition~\ref{jacobiano} we have $J_{\nu}G = \lambda e^{- \phi}$.    
Consider $(\psi_{n})_{\{ n \geq 1 \}}$ a sequence of continuous functions in $\Lambda$ such that $\psi_{n} \rightarrow \chi_{A} \ \ \nu$-a.e.p. and $\parallel \psi_{n} \parallel \leq 2 \ \forall n \geq 1$ where $\parallel\psi_{n} \parallel = \displaystyle\sup_{|x|=1} |\psi(x)| $.
Thus:
$$\mathcal{L}_{\phi}(e^{-\phi}\psi_{n})(x) = \displaystyle\sum_{y \in G^{-1}(x)}e^{\phi(y)}e^{- \phi(y)}\psi_{n}(y)= \displaystyle\sum_{y \in G^{-1}(x)} \psi_{n}(y).$$

Note that the last term converges to $\chi _{G(A)}(x)$ for $\mu$ almost every point $x$, since $A$ is such that $G$ is injective in  $A$.

Then the Dominated Convergence Theorem allows us to conclude that 
   $$\displaystyle\int_{\Lambda} \chi_{G(A)} d\mu = \displaystyle\lim_{n \rightarrow +\infty} \int \mathcal{L}_{\phi}(e^{- \phi}\psi_{n}) \ d\mu   . $$
Therefore, by the Theorem~\ref{Teorema B}, we obtain: 
\begin{eqnarray*}
\mu\big(G(A)\big) & = & \displaystyle\lim_{n \rightarrow +\infty }\displaystyle\int_{\Lambda}\mathcal{L}_{\phi}(e^{-\phi}\psi_{n}) \ d\mu \\
          & = & \displaystyle\lim_{n \rightarrow +\infty }\displaystyle\int_{\Lambda}\mathcal{L}_{\phi}(e^{-\phi}\psi_{n})h \ d\nu \\
          & = & \displaystyle\lim_{n \rightarrow +\infty }\displaystyle\int_{\Lambda}\mathcal{L}_{\phi}((h \circ G)e^{-\phi}\psi_{n}) \ d\nu \\
          & = & \displaystyle\lim_{n \rightarrow +\infty }\displaystyle\int_{\Lambda}(h \circ G)(e^{-\phi}\psi_{n}) \ d\mathcal{L}_{\phi}^{\ast}\nu \\
          & = & \displaystyle\lim_{n \rightarrow +\infty }\displaystyle\int_{\Lambda}\lambda (h \circ G)(e^{-\phi}\psi_{n}) \ d\nu \\
           & = & \displaystyle\lim_{n \rightarrow +\infty }\displaystyle\int_{\Lambda} \lambda (h \circ G) \cdot e^{-\phi}\cdot \chi_{A} \ d\nu.
\end{eqnarray*}
Then we have
      $$ \mu\big(G(A)\big) = \displaystyle\int_{A} \lambda \cdot \frac{(h \circ G)}{h} \cdot e^{-\phi} \ d\mu,$$  
which finishes the proof of the Lemma.                      
\end{proof}

Consider $\alpha$ a partition of $\Lambda$. We define from this a new partition for each $n\in \mathbb{N}$:
            $$\alpha_{n} = \bigvee_{j = 0}^{n-1} G^{-j}(\alpha)  . $$
Namely, an element $\mathcal{P}\in \alpha_{n} $ is written by:
             $$\mathcal{P}= P_{i_0}\cup G^{-1}(P_{i_1})\cup \cdots \cup  G^{-1}(P_{i_{n-1}}) $$ 
where $P_{i_k} \in \alpha$ for every $i_{k}=0,\cdots, n-1$.                        

Fix $c>0$ as in equation~\eqref{condicao sobre c} and $\epsilon$ as in remark~\ref{epsilon}. 
\begin{Lema}\label{particao geradora}
Let $\eta$ be a invariant probability measure whose Lyapunov exponents are bigger than $8c$, and $\alpha$ a partition with diameter smaller than $\epsilon$. Then, for $\eta$ almost every point $x\in \Lambda$, $\alpha_n(x)$ goes to zero when $n \to \infty$. In particular, $\alpha$ is a $G$-generating  partition with respect to $\eta$.
\end{Lema}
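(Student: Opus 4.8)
The plan is to prove the geometric statement that $\operatorname{diam}\alpha_n(x)\to 0$ for $\eta$‑a.e.\ $x$; the ``generating'' conclusion is then the standard criterion: $\operatorname{diam}\alpha_n(x)\to 0$ forces $\bigcap_{n}\alpha_n(x)=\{x\}$ a.e., which is exactly that $\bigvee_{j\ge 0}G^{-j}\alpha$ generates the Borel $\sigma$‑algebra mod $\eta$. I will exploit that the system is \emph{block‑diagonal}: on each $R_i$ the map $G_i$ splits as $\operatorname{diag}(\pm\alpha,v)$, where $\alpha=\rho^{-1}>3$ is the (constant) horizontal expansion and $v$ is the center derivative, with $v=|g_0'|\in[e^{-1},e]$ on $R_1,R_3$ and $v=\sigma^{-1}$ on $R_2$; in particular $\log v$ is bounded. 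Since the three rectangles are at positive distance from one another, after shrinking $\epsilon$ if necessary every atom of $\alpha$ lies inside a single $R_i$; hence, for $y$ in the atom $\alpha_n(x)$, the orbit pieces $x,Gx,\dots,G^{n-1}x$ and $y,Gy,\dots,G^{n-1}y$ follow the same itinerary and stay $\epsilon$‑close at each of the first $n$ steps.

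First I would handle the \emph{horizontal direction}, which needs no hypothesis. For $y\in\alpha_n(x)$ the horizontal coordinate is stretched by exactly $\alpha^{\,n-1}$ along the common itinerary, while it stays below $\operatorname{diam}\alpha<\epsilon$; hence $|\pi_x x-\pi_x y|<\epsilon\,\alpha^{-(n-1)}$, so the horizontal extent of $\alpha_n(x)$ is $<2\epsilon\,\alpha^{-(n-1)}\to0$ for every $n$.

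Next, the \emph{vertical direction}, where the hypothesis enters. The center Lyapunov exponent of $\eta$ is one of its Lyapunov exponents, so by Birkhoff's ergodic theorem, for $\eta$‑a.e.\ $x$ one has $\tfrac1n\sum_{j=0}^{n-1}\log v(G^{j}x)\to\lambda^{c}(x)>8c$, hence this average exceeds $4c$ for all large $n$. Applying the Pliss Lemma (as in Proposition~\ref{temp hiperbolicos}) with $c_1=2c<c_2=4c$ and the upper bound $A''=\sup\log v$, I obtain a positive‑density set of times $m$ — I will call them \emph{center‑hyperbolic times} of $x$ — such that $\sum_{j=k}^{m-1}\log v(G^{j}x)\ge 2c(m-k)$ for all $0\le k<m$. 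At such an $m$ the center‑coordinate dynamics along the itinerary of $x$ is a composition of the one‑dimensional maps $g_0$ (Möbius, with $g_0'\in[e^{-1},e]$ on $[0,1]$, hence uniformly bounded distortion) and $g_1$ (affine); running the Alves–Bonatti–Viana argument behind Remark~\ref{epsilon} and Proposition~\ref{distorcao} in this one‑dimensional setting, the inverse branch of $G^{m}$ along the orbit is defined and contracting on the $\epsilon$‑ball around $\pi_y(G^{m}x)$, with contraction factor $\le C\,e^{-2cm}$ for a constant $C$ depending only on $c,g_0,g_1$. Consequently the vertical extent of $\alpha_{m+1}(x)$ is $\le 2C\epsilon\,e^{-2cm}$.

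Finally I would conclude: along the subsequence $n=m_k+1$ (the center‑hyperbolic times), both the horizontal and the vertical extents of $\alpha_n(x)$ tend to $0$, so $\operatorname{diam}\alpha_n(x)\to0$ along this subsequence; since $\bigl(\alpha_n(x)\bigr)_n$ is a nested decreasing sequence of sets, $\operatorname{diam}\alpha_n(x)$ is non‑increasing, hence $\operatorname{diam}\alpha_n(x)\to0$ as $n\to\infty$ for $\eta$‑a.e.\ $x$, giving the lemma. The main obstacle is the one‑dimensional Alves–Bonatti–Viana‑type statement used at center‑hyperbolic times in the third step: one must check that the distortion of the compositions of $g_0$ and $g_1$ is uniformly bounded (this uses that $\log g_0'$ is Lipschitz on $[0,1]$ because $g_0$ is Möbius with derivative bounded away from $0$ and $\infty$) and that the neighbourhood on which the relevant inverse branch is contracting has a definite radius; everything else is bookkeeping with the splitting and with Pliss' lemma.
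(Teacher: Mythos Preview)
Your argument is correct and matches what the paper intends: the paper does not give its own proof but simply refers to Lemma~4.3 of Oliveira--Viana \cite{OV05}, whose mechanism is precisely Pliss' lemma applied to the Birkhoff sums of $\log\|DG^{-1}\|^{-1}$ (yielding infinitely many hyperbolic times for $\eta$-a.e.\ $x$ once all exponents exceed $8c$) together with the Alves--Bonatti--Viana contraction of inverse branches on $\epsilon$-balls at those times (Remark~\ref{epsilon}), and then the nestedness of the $\alpha_n(x)$.

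Your only deviation from the referenced argument is the decomposition into horizontal and vertical coordinates: because $DG$ is block-diagonal and the horizontal factor is the constant expansion $\alpha=\rho^{-1}>3$, you dispose of that direction for free and run Pliss/ABV only on the one-dimensional center dynamics $g_0,g_1$. This is a genuine simplification available in this specific model (and your remark that $g_0$ is M\"obius with $g_0'\in[e^{-1},e]$, hence $\log g_0'$ Lipschitz, is exactly what makes the one-dimensional bounded-distortion step clean), but it is not a different idea---in the general Oliveira--Viana setting one runs the same scheme with the full $\|DG^{-1}\|$, and here $\|DG(z)^{-1}\|^{-1}=\min(\alpha,|v(z)|)$ anyway, so the two versions coincide once $\epsilon$ is small enough that atoms lie in a single $R_i$.
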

The proof is analogous to Lema 4.3 in  \cite{OV05}.
We want to verify that $\mu = h \nu$ has Lyapunov exponents as in the Lemma. 
\begin{Lema}
Let $(a_n)\in \Sigma_{A}$ be a recurrent sequence for the shift $\sigma$ with $a_0 = 2$. Then there exist  $k \in (0,1)$ and an increasing subsequence of times $(a_{n_j})_{j\geq 0}$ such that
       $$ |DG_{|E^c}^{n_j}(x)| \geq C. k^{j}  $$
where $x$ belongs to the cylinder of length ${n_j}$ generated by the first $n_j$ terms of the sequence $(a_n)$.
\end{Lema}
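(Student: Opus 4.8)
The plan is to read off the size of the central derivative directly from the combinatorics of the admissible words, and to use the recurrence hypothesis only to single out a good subsequence of return times to the expanding symbol $2$.

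First I would record the block structure of the itinerary of $(a_n)$. Since $a_0=2$ and $(a_n)$ is recurrent, the symbol $2$ occurs infinitely often; let $0=t_0<t_1<t_2<\cdots$ enumerate its occurrences. The admissibility rules ($2\to 3$; $3\to\{1,2\}$; $1\to\{1,2\}$) force the word read on $[t_m,t_{m+1})$ to be $2\,3\,1^{\ell_m-2}$ with $\ell_m:=t_{m+1}-t_m\ge 2$. On each rectangle the subspace $E^c$ is $DG$-invariant, the central factor has modulus $\sigma^{-1}$ on $R_2$, and on $R_1\cup R_3$ it is at least $e^{-1}$ since $\|DG(x)^{-1}\|\le e$ there (as used in the proof of Proposition~\ref{temp hiperbolicos}). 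The chain rule then gives, for every $x$ in the cylinder $\mathcal{R}^{t_m}(a_0,\dots,a_{t_m-1})$,
\[
|DG_{|E^c}^{t_m}(x)|\;\ge\;\prod_{l=0}^{m-1}\sigma^{-1}e^{-(\ell_l-1)}\;=\;(\sigma^{-1})^{m}e^{-(t_m-m)}\;=\;\exp\!\big(m(1+\log\sigma^{-1})-t_m\big),
\]
and, more crudely, $|DG_{|E^c}^{n}(x)|\ge e^{-n}$ for every $n$ and every $x$ in the length-$n$ cylinder, because each central factor is $\ge e^{-1}$.

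Next I would take $n_j:=t_j$ and bound $t_j/j$. In the situation in which this lemma is applied $(a_n)$ is a typical itinerary for the measure $\mu=h\nu$, which charges $R_2$ (as $h>0$ and $\nu$ is fully supported), so Birkhoff's theorem yields $t_j/j\to \mu(R_2)^{-1}=:L<\infty$. The displayed estimate then gives
\[
\frac{1}{j}\log|DG_{|E^c}^{n_j}(x)|\;\ge\;(1+\log\sigma^{-1})-\frac{t_j}{j},
\]
and the right-hand side tends to $(1+\log\sigma^{-1})-L$; hence $|DG_{|E^c}^{n_j}(x)|\ge C\,k^{j}$ for all $j$, with $C>0$ an absorbing constant and $k\in(0,1)$ any number below $\min\{e^{(1+\log\sigma^{-1})-L},1\}$. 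If one only wants the literal statement from bare recurrence, the crude bound already closes it with $n_j=j$, $k=e^{-1}$, $C=1$.

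The step I expect to be the real obstacle is the passage from recurrence to the subsequence: plain topological recurrence forces every prefix of $(a_n)$ to reappear, but not, by itself, that the frequency of the symbol $2$ along some sequence of return times is bounded away from zero; this is where one relies on the orbit being generic for a measure that charges $R_2$. Once that frequency is controlled, the block decomposition above is exactly what turns it into the desired exponential lower bound for $|DG_{|E^c}^{n_j}|$.
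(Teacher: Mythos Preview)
The paper does not prove this lemma; it only records that the statement is adapted from Lemma~3.2 of \cite{LOR} and that the proof is analogous. So there is no in-paper argument to set yours against line by line.

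Your block decomposition of the itinerary into words $2\,3\,1^{\ell_m-2}$ between successive occurrences of the symbol $2$, together with the per-step central bounds ($|g_1'|=\sigma^{-1}$ on $R_2$, $|g_0'|\ge e^{-1}$ on $R_1\cup R_3$), is exactly the combinatorial skeleton one expects here. Your final remark is also correct: for the inequality as literally stated, with $k\in(0,1)$, the crude estimate $|DG_{|E^c}^{n}(x)|\ge e^{-n}$ already yields the conclusion with $n_j=j$, $k=e^{-1}$, $C=1$, and recurrence is not even used.

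Where your main argument diverges from \cite{LOR} is in how the loss along the $g_0$-stretches is controlled. You bound each block by $\sigma^{-1}e^{-(\ell_m-1)}$ and then invoke Birkhoff's theorem to control $t_j/j$; as you yourself flag, this step requires the orbit to be generic for a measure charging $R_2$, which bare recurrence does not provide. The argument in \cite{LOR} avoids any frequency control by using the explicit form of the neutral branch: from $f'(y)=e^{-1}\big(f(y)/y\big)^{2}$ one gets the closed formula
\[
(g_0^{n})'(z)=e^{\,n}\Big(\frac{g_0^{n}(z)}{z}\Big)^{2},
\]
so the central derivative accumulated over a run of $g_0$'s is governed by the ratio of the endpoint $y$-coordinates rather than by the length of the run. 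Feeding this into the block decomposition and taking $n_j$ to be the successive returns to the symbol $2$ (which exist by recurrence) is what produces the estimate $|DG_{|E^c}^{n_j}(x)|\ge C\,k^{j}$ without any appeal to ergodic averages. Your worst-case bound $e^{-(\ell-1)}$ discards precisely this telescoping structure, which is why you are then forced to recover the lost factor statistically.
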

This statement is adapted from Lemma 3.2 in \cite{LOR} and its proof is analogous.

\begin{Lema}\label{expoentes positivos}
The Lyapunov exponents of $\mu = h\nu$ are bigger than $8c$.
\end{Lema}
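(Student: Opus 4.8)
The plan is to reduce to the two invariant directions of $G$ and estimate each separately. First I would note that, since $\Lambda$ lies in the union of the planes $P_0\cup P_1$ and each branch $G_i$ is, on its rectangle, a skew product $(x,y,z)\mapsto(\pm\alpha x+\mathrm{const},\,g_i(y),\,\mathrm{const})$ with $g_1=g_3=g_0$ and $g_2(y)=1-\sigma^{-1}y$, the dynamics is two--dimensional with $DG=\mathrm{diag}(\pm\alpha,\,g_i'(y))$ in the $(x,y)$ coordinates; hence $E^u=\mathbb{R}\partial_x$ and $E^c=\mathbb{R}\partial_y$ are $DG$--invariant. Along $E^u$ the derivative is exactly $\alpha=\rho^{-1}$ everywhere, so that exponent is the constant $\log\rho^{-1}>\log 3$. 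Along $E^c$ the cocycle is scalar, so for $\mu$--a.e.\ $x$ the center exponent equals the Birkhoff average $\lambda^c(x)=\lim_n\frac1n\sum_{j=0}^{n-1}\log\|DG|_{E^c}(G^j x)\|$; note the integrand is bounded, equal to $\log\sigma^{-1}$ on $R_2$ and equal to $\log g_0'(y)\ge\log\frac1e=-1$ on $R_1\cup R_3$ (using $\|DG(x)^{-1}\|\le e$ there). So everything reduces to bounding $\lambda^c$ below.

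The key dynamical input is that $\mu$--almost every orbit spends, in the limit, at least a fraction $1-\gamma$ of its time in the doubly--expanding rectangle $R_2$. I would extract this from Proposition~\ref{dec exp}: since $\nu(R^n)$ decays exponentially over $R^n\in P^n$, we get $\sum_n\nu(P^n)<\infty$, so by Borel--Cantelli and $\mu\sim\nu$, for $\mu$--a.e.\ $x$ there is $N(x)$ with $R^n(x)\in B^n$ for all $n\ge N(x)$; equivalently the length--$n$ itinerary $(a_0,\dots,a_{n-1})$ of $x$ lies outside $I(\gamma,n)$, hence contains at least $(1-\gamma)n$ symbols $2$, for every $n\ge N(x)$. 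Applying Birkhoff's theorem to $\chi_{R_2}$, the limit frequency $p_2(x):=\lim_n\frac1n\#\{0\le j<n:\,G^j x\in R_2\}$ satisfies $p_2(x)\ge 1-\gamma$ for $\mu$--a.e.\ $x$. Feeding this into the pointwise lower bounds for $\log\|DG|_{E^c}\|$ gives, for $\mu$--a.e.\ $x$,
$$\lambda^c(x)\ \ge\ p_2(x)\log\sigma^{-1}-\big(1-p_2(x)\big)\ =\ p_2(x)\big(\log\sigma^{-1}+1\big)-1\ \ge\ (1-\gamma)\log\sigma^{-1}-\gamma,$$
where the last step uses that $t\mapsto t(\log\sigma^{-1}+1)-1$ is increasing. (The same conclusion can be reached through hyperbolic times and Proposition~\ref{temp hiperbolicos}, or through the recurrence--to--$R_2$ lemma above, which plays the role of the central--exponent gap of \cite{LOR}; but the frequency argument is cleanest, since Birkhoff supplies both $p_2\ge1-\gamma$ and the center--exponent identity $\mu$--a.e., so no ergodicity of $\mu$ is needed and the bound passes to every ergodic component automatically.)

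Finally, $(1-\gamma)\log\sigma^{-1}-\gamma=\log\big(\sigma^{-(1-\gamma)}e^{-\gamma}\big)$, which by \eqref{condicao sobre c} already exceeds $4c$. To reach the threshold $8c$ I would simply record at this point that $c$ may be chosen small enough that $e^{8c}<\sigma^{-(1-\gamma)}e^{-\gamma}$ and $8c<\log\rho^{-1}$ --- both are possible since $\sigma^{-(1-\gamma)}e^{-\gamma}>1$ by \eqref{condicao sobre gamma} and $\log\rho^{-1}>\log 3$ are fixed positive numbers, and shrinking $c$ only shrinks the auxiliary constants $\theta$ (in Proposition~\ref{temp hiperbolicos}) and $\epsilon$ (in Remark~\ref{epsilon}) without affecting any earlier argument. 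With such a $c$, both Lyapunov exponents of $\mu$ exceed $8c$ at $\mu$--a.e.\ point. The only genuine obstacle is this factor of two between the ``$4c$'' naturally produced by the combinatorial/hyperbolic--times estimates and the ``$8c$'' required by Lemma~\ref{particao geradora}; the remedy is exactly the freedom in the choice of $c$ just used (equivalently, one strengthens \eqref{condicao sobre c} to $e^{8c}<\sigma^{-(1-\gamma)}e^{-\gamma}$ from the start).
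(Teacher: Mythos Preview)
The paper does not actually prove Lemma~\ref{expoentes positivos}; it only states it and records the preceding recurrence lemma adapted from \cite{LOR} as a hint. So there is no line-by-line comparison to make, but your approach is different from what the paper gestures at and, in my view, more transparent. The paper's hint (the lemma about recurrent sequences hitting $R_2$) points toward the spectral--gap argument of \cite{LOR}, which separates ergodic measures into those supported on the indifferent fixed point and those with central exponent uniformly bounded below. You bypass this by going straight to the visit frequency in $R_2$: Borel--Cantelli on $\sum_n\nu(P^n)<\infty$ gives $p_2(x)\ge 1-\gamma$ for $\mu$--a.e.\ $x$, and then the pointwise bounds $\log|g_2'|=\log\sigma^{-1}$ and $\log|g_0'|\ge -1$ feed directly into the Birkhoff average for the scalar central cocycle. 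This is cleaner and uses only machinery already set up in Section~\ref{Hyperbolic times}.

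Your argument is correct. The one genuine wrinkle you identify --- that the combinatorics naturally produce $\lambda^c>4c$ while the lemma asks for $8c$ --- is real, and your fix (shrink $c$ so that $e^{8c}<\sigma^{-(1-\gamma)}e^{-\gamma}$ and $8c<\log\rho^{-1}$) is legitimate: the choice of $c$ in \eqref{condicao sobre c} is open, and nothing upstream breaks when $c$ is made smaller (only $\theta$ in Proposition~\ref{temp hiperbolicos} and $\epsilon$ in Remark~\ref{epsilon} change, harmlessly). It would be tidier to impose the $8c$ bound on $c$ from the outset rather than retroactively, but the logic is fine either way. One small cosmetic point: when you write $\log g_0'(y)\ge -1$, it is worth saying explicitly that this follows from $\|DG(x)^{-1}\|\le e$ on $R_1\cup R_3$ because the central derivative is the smaller singular value there (the unstable one being $\alpha>3$); you do say this, but it could be stated more sharply.
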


Now we state a version of the Roklin's Formula proved by Oliveira and Viana in \cite{OV05}. To compute the metric entropy of $G$ with respect to some measure by integrating the $\log$ of  Jacobian of this measure with respect to $G$. See \cite{OV05} for the proof.

Recall the Roklin's Formula: if $\eta$ is an invariant probability measure and there exists a $G$-generated partition with respect to $\eta$ then  we have:
   $$ h_{\eta}(G)= \int \log (J_{\eta}G) \, d\eta   .$$

\begin{Prop}\label{log de lambda}
If $\lambda$ is the spectral radius of the transfer operator $\mathcal{L}_{\phi}$ then the following holds:
        $$h_{\mu}(G) + \int \phi \ d\mu  = \log \lambda = P.$$
\end{Prop}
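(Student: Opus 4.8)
The plan is to apply Rokhlin's formula to $\mu$ and then unwind the explicit expression for the Jacobian obtained in Lemma~\ref{jacobiano de mu}. First I would fix a finite partition $\alpha$ of $\Lambda$ whose elements have diameter smaller than the constant $\epsilon$ from Remark~\ref{epsilon}; for instance one may take a partition refining $\{R_{1},R_{2},R_{3}\}$. By Lemma~\ref{expoentes positivos} the Lyapunov exponents of $\mu = h\nu$ are larger than $8c$, so Lemma~\ref{particao geradora} guarantees that $\alpha$ is a $G$-generating partition with respect to $\mu$. Since $\mu$ is $G$-invariant by Lemma~\ref{med invariante}, Rokhlin's formula applies and gives
$$h_{\mu}(G) = \int \log \big(J_{\mu}G\big)\, d\mu.$$

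Next I would substitute $J_{\mu}G = \lambda\,\frac{h\circ G}{h}\,e^{-\phi}$ from Lemma~\ref{jacobiano de mu}, so that
$$\log \big(J_{\mu}G\big) = \log \lambda + \log (h\circ G) - \log h - \phi.$$
Because $h$ is continuous and strictly positive on the compact set $\Lambda$ (Lemma~\ref{h positivo}), it is bounded away from $0$ and from $\infty$; hence $\log h$ is bounded, and the same is true of $\phi$ and of $\log(J_{\mu}G)$, so all the integrals below are well defined. Integrating the previous identity and using the $G$-invariance of $\mu$ to get $\int \log (h\circ G)\, d\mu = \int \log h \, d\mu$, the two terms involving $h$ cancel and we are left with
$$h_{\mu}(G) = \log \lambda - \int \phi \, d\mu,$$
which is exactly the desired identity $h_{\mu}(G) + \int \phi \, d\mu = \log \lambda = P$, the last equality being the definition of $P$.

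The only delicate point is the applicability of Rokhlin's formula, namely the existence of a $G$-generating partition; this is precisely where the hypothesis on the Lyapunov exponents of $\mu$ is used, and it has already been secured by Lemmas~\ref{particao geradora} and~\ref{expoentes positivos}. Once that is in place the computation is purely formal, the integrability of every term being immediate from the boundedness of $\phi$ and from the positivity and continuity of the eigenfunction $h$.
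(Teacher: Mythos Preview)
Your proof is correct and follows essentially the same route as the paper: use Lemmas~\ref{expoentes positivos} and~\ref{particao geradora} to justify Rokhlin's formula, substitute the explicit Jacobian from Lemma~\ref{jacobiano de mu}, and cancel the $\log h$ terms by invariance of $\mu$. Your version is in fact slightly more careful in spelling out the choice of partition and the integrability of the terms involved.
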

\begin{proof}
 Lemma~\ref{expoentes positivos} and  Lemma~\ref{particao geradora} imply that there exists a $G$-generating partition. Then, according to the Roklin's Formula we have
  $$h_{\mu}(G) = \int \log (J_{\mu} G)\ d\mu .$$
On the other hand, Lemma~\ref{jacobiano de mu} shows that $J_{\mu G}= \lambda\frac{h\circ G}{h}e^{-\phi}$, which gives us:
\begin{eqnarray*}
 h_{\mu}(G) + \int \phi \ d\mu &=&\int \log (J_{\mu} G)\ d\mu + \int \phi \ d\mu \\
                               &=& \int \log \left(\lambda\frac{h\circ G}{h}e^{-\phi} \right)\ d\mu + \int \phi \ d \mu \\  
                               &=& \int \log \lambda \ d\mu + \int \log(h \circ G)\ d\mu - \int \log h \ d\mu \\
                               &=& \int \log \lambda  \ d\mu = \log \lambda = P. 
\end{eqnarray*} 
The fourth equality follows from the fact that the probability measure is invariant by $G$.
\end{proof}
 
We have to check that $\mu$ is an equilibrium state for the system $(G, \phi)$. Actually, any expanding non-lacunary Gibbs measure is an equilibrium state. This proof was given in \cite{OV08} and still holds in our context. We will give the idea here. We  first remember that 
  $$\sup \{  h_{\eta}(G) + \int \phi \ d\eta \} = P_{top}(G, \phi) = \sup \{ P_{top}(G, \Lambda \backslash \mathcal{H}),  P_{top}(G, \mathcal{H})  \}$$
where the last equality is a result due Pesin in \cite{Pesin97}, and $ P_{top}(G, \Lambda \backslash \mathcal{H})$
denotes the topological pressure relative to the set $\Lambda \backslash \mathcal{H}$.
Then, using the last proposition it is enough to prove that  $P >  P_{top}(G, \Lambda \backslash \mathcal{H}) $ and $P = P_{top}(G, \mathcal{H}) $. 
To see that $P$ coincides with the topological pressure relative to the set $\mathcal{H}$, we construct a family of partitions $\mathcal{U}^m$ of $\mathcal{H}$ formed by hyperbolic cylinders and whose diameters go to zero as $m \to \infty$. If $U \in \mathcal{U}^m$ is a hyperbolic cylinder of lenght $n$, then, by Proposition~\ref{mu Gibbs}, we have
 $$ K^{-1} \leq \displaystyle\sum_{U\in \mathcal{U}} \exp(S_{n}\phi(U) - Pn) \leq K.$$
Note that changing $P$ for an arbitrary $\alpha$, $\alpha \in \mathbb{R}$, in the middle term of the inequality it fails to be true for every $n$ if  $\alpha \neq P$.
Then by the definition of topological pressure relative to a set, we have $P = P_{top}(G, \mathcal{H}) $. 

%%%%%%%%%%%%%%%%%%%%%%%%%%%%%%%%%%%%%%%%%%%%%%%%%%%%%%%%%%%%%%%%%%%%%%%%%%%%%%%%%%%%%%%%%%%%%%%%%%%%%%%%%%%%%%%%
%%%%%%%%%%%%%%%%%%%%%%%%%%%%%%%%%%%%%%%%%%%%%%%%%%%%%%%%%%%%%%%%%%%%%%%%%%%%%%%%%%%%%%%%%%%%%%%%%%%%%%%%%%%%%%%%
%%%%%%%%%%%%%%%%%%%%%%%%%%%%   UNICIDADE                       %%%%%%%%%%%%%%%%%%%%%%%%%%%%%%%%%%%%%%%%%%%%%%%%
%%%%%%%%%%%%%%%%%%%%%%%%%%%%                                    %%%%%%%%%%%%%%%%%%%%%%%%%%%%%%%%%%%%%%%%%%%%%%%%
%%%%%%%%%%%%%%%%%%%%%%%%%%%%%%%%%%%%%%%%%%%%%%%%%%%%%%%%%%%%%%%%%%%%%%%%%%%%%%%%%%%%%%%%%%%%%%%%%%%%%%%%%%%%%%%%
%%%%%%%%%%%%%%%%%%%%%%%%%%%%%%%%%%%%%%%%%%%%%%%%%%%%%%%%%%%%%%%%%%%%%%%%%%%%%%%%%%%%%%%%%%%%%%%%%%%%%%%%%%%%%%%%

\section{Uniqueness}\label{Uniqueness}

In this section we finish the proof of  Theorem~\ref{Teorema Principal} proving the uniqueness of the equilibrium state. 
Applying the Jensen's Formula from basic calculus to the logarithm function we have the following lemma.

\begin{Lema}\label{Jensen}
Let $p_{i}$ and $q_{i}$ be positive real numbers for each $i=1, \cdots, n$.
The next inequality holds:
         $$\frac{\displaystyle\sum_{i=1}^{n} p_{i}\log q_{i}}{\sum p_{j}} \leq \log\left( \frac{\displaystyle\sum_{i=1}^{n} p_{i}q_{i}}{\sum p_{j}} \right) .   $$
Moreover, the equality holds if, and only if, the $q_{i}$'s are all equal for every $i=1, \cdots, n$. 

\end{Lema}
\begin{Prop}\label{jacobiano est eq}
Let $\eta$ be an equlibrium state for the system $(G,\phi)$ of the Theorem~\ref{Teorema Principal}. The Jacobian of $\eta$ is given by 
      $$J_{\eta}G= \frac{h \circ G}{h} \lambda e^{-\phi}.$$
\end{Prop}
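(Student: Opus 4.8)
The plan is to combine Rokhlin's formula with the Jensen inequality of Lemma~\ref{Jensen}, following the classical scheme that characterizes equilibrium states as the measures whose Jacobian agrees with that of the conformal measure. Write $J_{0}:=\lambda\,\frac{h\circ G}{h}\,e^{-\phi}$; by Lemma~\ref{jacobiano de mu} this is the Jacobian of $\mu$, and from $\mathcal{L}_{\phi}h=\lambda h$ (Theorem~\ref{Teorema B}) one checks at once that $\sum_{y\in G^{-1}(x)}1/J_{0}(y)=\mathcal{L}_{\phi}h(x)/(\lambda h(x))=1$ for every $x\in\Lambda$. The goal is to show $J_{\eta}G=J_{0}$ for $\eta$-almost every point.

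The first step — and the one I expect to be the main obstacle — is to show that an equilibrium state $\eta$ is necessarily ``hyperbolic'': that it admits a Jacobian and that all its Lyapunov exponents exceed $8c$. This is where the strict inequality $P=\log\lambda>P_{top}(G,\Lambda\setminus\mathcal{H})$ recorded in Section~\ref{Existence2} enters, forcing $\eta(\mathcal{H})=1$ and ruling out escape of mass to the non-hyperbolic part of $\Lambda$; the quantitative exponent bound is then obtained as for $\mu$ in Lemma~\ref{expoentes positivos}. Granting this, Lemma~\ref{particao geradora} provides a finite $G$-generating partition with respect to $\eta$, so Rokhlin's formula applies and $h_{\eta}(G)=\int\log J_{\eta}G\,d\eta$; moreover, as for any invariant measure possessing a Jacobian, one has $\sum_{y\in G^{-1}(x)}1/J_{\eta}G(y)=1$ and the change-of-variables identity $\int f\,d\eta=\int\big(\sum_{y\in G^{-1}(x)}f(y)/J_{\eta}G(y)\big)\,d\eta(x)$ for $\eta$-a.e.\ $x$ and every integrable $f$.

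Next I would feed this into the equilibrium relation. Since $\eta$ is an equilibrium state and $P_{top}(G,\phi)=\log\lambda$ by Section~\ref{Existence2}, and since $\log J_{0}=\log\lambda+\log(h\circ G)-\log h-\phi$ with $\int\log(h\circ G)\,d\eta=\int\log h\,d\eta$ by invariance (legitimate because $h$ is continuous and strictly positive on the compact set $\Lambda$, hence $\log h$ is bounded), Rokhlin's formula yields
$$0=h_{\eta}(G)+\int\phi\,d\eta-\log\lambda=\int\log\frac{J_{\eta}G}{J_{0}}\,d\eta=\int\Big(\sum_{y\in G^{-1}(x)}\frac{1}{J_{\eta}G(y)}\log\frac{J_{\eta}G(y)}{J_{0}(y)}\Big)\,d\eta(x).$$
Finally, fixing $x$ and writing $G^{-1}(x)=\{y_{1},\dots,y_{k}\}$, $p_{i}=1/J_{\eta}G(y_{i})$, $q_{i}=J_{\eta}G(y_{i})/J_{0}(y_{i})$, we have $\sum_{i}p_{i}=1$ and $\sum_{i}p_{i}q_{i}=\sum_{i}1/J_{0}(y_{i})=1$, so Lemma~\ref{Jensen} gives $\sum_{i}p_{i}\log q_{i}\le\log\big(\sum_{i}p_{i}q_{i}\big)=0$, with equality exactly when all the $q_{i}$ are equal, i.e.\ (since their $p$-average is $1$) when $q_{i}=1$ for every $i$. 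Thus the integrand above is nonpositive for $\eta$-a.e.\ $x$; as its integral vanishes it must be $0$ $\eta$-a.e., which forces $J_{\eta}G(y)=J_{0}(y)$ for all $y$ in $\eta$-a.e.\ fibre, and hence $J_{\eta}G=\lambda\frac{h\circ G}{h}e^{-\phi}$ $\eta$-almost everywhere, as claimed.
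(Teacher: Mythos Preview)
Your argument is essentially identical to the paper's: both combine Rokhlin's formula with invariance to obtain $0=\int\sum_{y\in G^{-1}(x)}p_i\log q_i\,d\eta$ with $p_i=1/J_\eta G(y_i)$ and $q_i=J_\eta G(y_i)/J_0(y_i)$ (the paper writes $g=1/J_0$, so its $q_i=g(y_i)J_\eta G(y_i)$ is the same quantity), and then apply Lemma~\ref{Jensen} pointwise to force $q_i\equiv 1$. If anything you are more careful than the paper about the preliminary step of justifying Rokhlin's formula for $\eta$, which the paper simply invokes without verifying the hypotheses for a general equilibrium state.
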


\begin{proof}
Since $\mu= h\nu$ is an equilibrium state, (according to the previous section) it maximizes the topological pressure. If $\eta$ is also an equilibrium state, we have:
   \begin{eqnarray*}
   h_{\eta}(G) + \int \phi \, d\eta = h_{\mu}(G) + \int \phi \, d\mu = \log \lambda.        
   \end{eqnarray*}
Note that the last equality follows from the Lemma~\ref{log de lambda}.
Define the function $g$ on $\Lambda$ by: 
         $$g(x)= \frac{h(x)e^{\phi(x)}}{\lambda ( h \circ G)(x)} .$$
We want to show that $J_{\eta}G = \frac{1}{g}$.       
By the definition of $g$ we have:
  \begin{eqnarray*}
   h_{\eta}(G) + \int \log g \, d\eta & = & h_{\eta}(G) + \int \log \Big(\frac{h(x)e^{\phi(x)}}{\lambda (h \circ G)(x)}\Big) \, d\eta  \\
                                      & = & h_{\eta}(G) + \int \phi \, d\eta - \log \lambda +\\
  &+&\int \log(h) - \log(h \circ G)\, d\eta  \\
                                      & = & \int \log(h) - \log(h \circ G)\, d\eta = 0.
   \end{eqnarray*}
We point out that the last equality follows from the invariance of $\eta$.
On the other hand, using the Roklin's Formula we have:
  $$h_{\eta}G =\int \log J_{\eta}G \ d\eta. $$
Replacing it in the previous equation, we have:  
\begin{eqnarray*}
   0& = & h_{\eta}(G) + \int \log g \, d\eta = \int \log \left( \frac{g}{(J_{\eta}G)^{-1}} \right) \, d\eta \\
    & = & \int \displaystyle\sum_ {y \in G^{-1}(x)}( J_{\eta}G(y))^{-1}\log \left( \frac{g(y)}{(J_{\eta}G(y))^{-1}} \right) \, d\eta.  
\end{eqnarray*}  

Since $\eta$ is $G$-invariant it yelds :
\begin{equation}\label{pi eh um}
\displaystyle\sum_ {y \in G^{-1}(x)} \frac{1}{J_{\eta}G(y)}= 1,   \quad \eta-\mbox{q.t.p} \ x \in \Lambda.
\end{equation}
Note that any $x\in \Lambda$ has, at most, 2 pre-images. Let $f_n(x)$ be the number of pre-images of $x$ by $G^n$. Consider $y_{i}\in G^{-n}(x)$ with $i=1,\cdots,f_n(x)$ and take:  
     $$ p_{i}= \frac{1}{J_{\eta}G(y_{i})}  \quad \mbox{e} \quad  q_{i}= \frac{g(y_i)}{(J_{\eta}G)^{-1}(y_{i})}      .   $$

By equation~\eqref{pi eh um}, we have $\sum p_i =1$. Applying the Lemma~\ref{Jensen} we have
 \begin{eqnarray*}
  \displaystyle\sum_{i=1}^{f_n(x)} p_{i}\log q_{i} & \leq & \log \big( \displaystyle\sum_{i=1}^{f_n(x)} p_{i}q_{i}\big)\\
          										& =    & \log \big(\displaystyle\sum_{i=1}^{f_n(x)} g(y_{i})\big)= \log 1 = 0 .	
\end{eqnarray*}
Integrating with respect to $\eta$:

$0 = \displaystyle\int \sum_{i=1}^{f_n(x)} p_{i}\log q_{i} \, d \eta(x) \leq \int  \log \Big(\displaystyle\sum_{i=1}^{f_n(x)} p_{i} q_{i}\Big) \, d \eta(x) = 0 .$

Then, the equality holds. So  Lemma~\ref{Jensen} also guarantees that the  $q_{i}$'s are equal. Thus, there exists a constant which depends on $x$, such that 
    $$q_{i} = \frac{g(y)}{(J_{\eta}G(y))^{-1}}= c(x) \quad \forall y \in G^{-1}(x)  .$$

By \eqref{pi eh um}, again, we have:
      $$1 = \frac{\displaystyle\sum_{y \in G^{-1}(x)}g(y)}{\displaystyle\sum_{y \in G^{-1}(x)} (J_{\eta}G(y))^{-1}} =  c(x) .    $$
Then, $g$ coincides with $(J_{\eta}G)^{-1}$  in every pre-image of points in a $\eta$ full measure set. Finally, the invariance of $\eta$ gives that they coincide in $\eta$ almost every point. 
\end{proof}
From the Lemma~\ref{h positivo} we have that $h>0$. So, the measure given by $\tau = \frac{1}{h}\eta$ is well-defined. We will see that this measure is an eigenmeasure for the dual transfer operator.

\begin{Lema}\label{est eq automedida}
Let $\eta$ be an equilibrium state for the system $(G,\phi)$. Then the measure $\tau = \frac{1}{h}\eta$ is an eigenmeasure for $\mathcal{L}_{\phi}^{\ast}$ associated to the spectral radius of $\mathcal{L}_{\phi}$. In other words, $\tau$ satisfies: 
         $$\mathcal{L}_{\phi}^{\ast}\tau = \lambda \tau. $$
\end{Lema}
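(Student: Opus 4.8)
The plan is to show that $\mathcal{L}_{\phi}^{\ast}\tau = \lambda\tau$ by testing against an arbitrary continuous function $\psi\in\mathcal{C}(\Lambda)$, reducing the identity to a pointwise computation involving the Jacobian of $\eta$ that was determined in Proposition~\ref{jacobiano est eq}. Recall that $\int \psi\, d\mathcal{L}_{\phi}^{\ast}\tau = \int \mathcal{L}_{\phi}\psi\, d\tau = \int \tfrac{1}{h}\,\mathcal{L}_{\phi}\psi\, d\eta$, so it suffices to prove that $\int \tfrac{1}{h}\,\mathcal{L}_{\phi}\psi\, d\eta = \lambda\int \psi\, d\tau = \lambda\int \tfrac{1}{h}\psi\, d\eta$ for every such $\psi$.

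First I would expand $\mathcal{L}_{\phi}\psi(x) = \sum_{y\in G^{-1}(x)} e^{\phi(y)}\psi(y)$ and rewrite $\tfrac{1}{h(x)}\mathcal{L}_{\phi}\psi(x) = \sum_{y\in G^{-1}(x)} \tfrac{e^{\phi(y)}}{h(x)}\psi(y)$. Using $x = G(y)$ and Proposition~\ref{jacobiano est eq}, which gives $J_{\eta}G(y) = \tfrac{h(G(y))}{h(y)}\lambda e^{-\phi(y)}$, we get the key algebraic identity $\tfrac{e^{\phi(y)}}{h(G(y))} = \tfrac{\lambda}{h(y)\,J_{\eta}G(y)}$. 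Substituting, $\tfrac{1}{h(x)}\mathcal{L}_{\phi}\psi(x) = \lambda\sum_{y\in G^{-1}(x)} \tfrac{1}{J_{\eta}G(y)}\cdot\tfrac{\psi(y)}{h(y)}$. Now integrate against $\eta$: the expression $\sum_{y\in G^{-1}(x)} \tfrac{\varphi(y)}{J_{\eta}G(y)}$ is, by the definition of the Jacobian and the change-of-variables formula for $\eta$ (splitting $\Lambda$ into finitely many pieces on which $G$ is injective), precisely the pointwise realization of the transfer/pushforward operation whose $\eta$-integral equals $\int \varphi\, d\eta$; applied with $\varphi = \psi/h$ this yields $\int \tfrac{1}{h}\mathcal{L}_{\phi}\psi\, d\eta = \lambda\int \tfrac{\psi}{h}\, d\eta = \lambda\int \psi\, d\tau$, as desired.

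The step I expect to be the main obstacle is justifying the change-of-variables identity $\int \big(\sum_{y\in G^{-1}(x)} \tfrac{\varphi(y)}{J_{\eta}G(y)}\big)\, d\eta(x) = \int \varphi\, d\eta$ rigorously, since $G$ is not globally injective and has a non-constant number of preimages. I would handle this by using a finite measurable partition $\{\Lambda_i\}$ of $\Lambda$ subordinate to the rectangles $R_1,R_2,R_3$ on each of which $G$ is injective, writing $\int \varphi\, d\eta = \sum_i \int_{\Lambda_i}\varphi\, d\eta$, and applying the Jacobian identity $\eta(G(A)) = \int_A J_{\eta}G\, d\eta$ together with the fact that $\eta$-a.e.\ point of $\Lambda$ lies in $G(\Lambda_i)$ for exactly those $i$ corresponding to its admissible preimages. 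Combining this with the relation \eqref{pi eh um}, $\sum_{y\in G^{-1}(x)} 1/J_{\eta}G(y) = 1$ $\eta$-a.e., which the previous proposition already established, closes the argument and shows $\tau$ is a fixed point of $\tfrac{1}{\lambda}\mathcal{L}_{\phi}^{\ast}$, i.e.\ $\mathcal{L}_{\phi}^{\ast}\tau = \lambda\tau$.
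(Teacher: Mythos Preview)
Your proposal is correct and follows essentially the same route as the paper: test $\mathcal{L}_{\phi}^{\ast}\tau=\lambda\tau$ against continuous $\psi$, use the Jacobian formula from Proposition~\ref{jacobiano est eq} to rewrite $e^{\phi(y)}/h(G(y))=\lambda/(h(y)J_{\eta}G(y))$, and then invoke the change-of-variables identity $\int\big(\sum_{y\in G^{-1}(x)}\varphi(y)/J_{\eta}G(y)\big)\,d\eta(x)=\int\varphi\,d\eta$ with $\varphi=\psi/h$. Your explicit justification of this last identity via the partition into injectivity domains is in fact more careful than the paper's own presentation, and the appeal to \eqref{pi eh um} at the end is not strictly needed once that change-of-variables is in hand.
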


\begin{proof}
We will show that every continuous function $\psi : \Lambda \rightarrow \mathbb{R}$ satisfy 
         $$ \int \psi \  d \mathcal{L}_{\phi}^{\ast}\tau =  \lambda \int \psi \ d\tau  .    $$
         
In fact,
\begin{eqnarray*}
\displaystyle\int \psi \  d \mathcal{L}_{\phi}^{\ast}\tau &=& \displaystyle\int \mathcal{L}_{\phi} \psi \ d \tau \\
                                                          &=& \displaystyle\int \mathcal{L}_{\phi}\psi\cdot \frac{1}{h} \ d \eta \\
                                                          &=& \displaystyle\int \sum_{y \in G^{-1}(x)}\Big[ e^{\phi(y)}\psi(y)\cdot \frac{1}{h \big(G(y)\big)}\Big] \ d \eta(x). 
\end{eqnarray*}     

Since $\eta$ is an equilibrium state, the Proposition~\ref{jacobiano est eq} ensures that for every  $y$ the  Jacobian of $\eta$ satisfies: 
\begin{eqnarray}\label{eq jacobiano est eq}
   \frac{e^{\phi(y)}}{h(G(y))} = \lambda \cdot \frac{1}{J_{\eta}G(y)h(y)}.  
\end{eqnarray}
Thus:
\begin{eqnarray*}
   \displaystyle\sum_{y \in G^{-1}(x)} \frac{1}{J_{\eta}G(y)} &= &\displaystyle\sum_{y \in G^{-1}(x)}\frac{ e^{\phi(y)}\cdot h(y)}{\lambda h(G(y))} \\
   = \frac{ \displaystyle\sum_{y \in G^{-1}(x)}e^{\phi(y)}\cdot h(y)}{\lambda h(x)}
 &=&\frac{ \mathcal{L}_{\phi}h(x)}{\lambda h(x)} = 1.  
\end{eqnarray*} 
The last equality comes from the fact that $h$ is an eigenfunction of $\mathcal{L}_{\phi}$.  
 According to the equation~\eqref{eq jacobiano est eq}, we have:
\begin{eqnarray*}
\displaystyle\int \psi \  d \mathcal{L}_{\phi}^{\ast}\tau &=& \displaystyle\int \sum_{y \in G^{-1}(x)}\lambda \frac{\psi(y)}{J_{\eta}G(y)h(y)} \ d \eta(x)\\
                                                          &=& \lambda \displaystyle\int \sum_{y \in G^{-1}(x)}\frac{1}{J_{\eta}G(y)} \frac{\psi(y)}{h(y)} \ d \eta(x)\\
                                                          &=&\lambda \displaystyle\int \displaystyle\sum_{y \in G^{-1}(x)} \frac{\psi(y)}{h(y)} \ d \eta \\  
                                                          &=& \lambda \displaystyle\int \frac{\psi }{h}\ d \eta = \lambda \displaystyle\int \psi \ d \tau.                      
\end{eqnarray*}         
wich finishes the proof.         
\end{proof}
As consequence of this result, any equilibrium state associated to $(G, \phi)$ is equivalent to an eigenmeasure and, thus, is an expanding measure: it gives full measure to the set $\mathcal{H}$.

\begin{Prop} \label{eq. eh gibbs}
Consider the system $(G,\phi)$ as in the Theorem~\ref{Teorema Principal}.
Any equilibrium state of $G$ associated to the potential $\phi$ is a non-lacunary Gibbs probability measure.
\end{Prop}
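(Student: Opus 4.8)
The plan is to reduce any equilibrium state to the reference (conformal) measure by dividing out the eigenfunction, and then to observe that the Gibbs estimates proved in Section~\ref{prova teo B} depend on the reference measure only through the dual eigenvalue equation, so they transfer.

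First I would fix an equilibrium state $\eta$ for $(G,\phi)$. Since $h>0$ by Lemma~\ref{h positivo}, the finite measure $\tau:=\frac{1}{h}\eta$ is well defined, and Lemma~\ref{est eq automedida} gives $\mathcal{L}_{\phi}^{\ast}\tau=\lambda\tau$; after normalising, $\tau$ is a probability eigenmeasure for the dual operator, just like the measure $\nu$ obtained in Theorem~\ref{Teorema B}. The key point is that the whole chain Proposition~\ref{jacobiano} $\Rightarrow$ Proposition~\ref{dec exp} $\Rightarrow$ Proposition~\ref{H total} $\Rightarrow$ Corollary~\ref{teta pliss} $\Rightarrow$ Proposition~\ref{Gibbs} never used anything about $\nu$ beyond $\mathcal{L}_{\phi}^{\ast}\nu=\lambda\nu$ (equivalently $J_{\nu}G=\lambda e^{-\phi}$, hence $J_{\nu}G^{n}=\lambda^{n}e^{-S_{n}\phi}$ via Proposition~\ref{regra da cadeia}), together with the intrinsic distortion control of Proposition~\ref{distorcao} and the combinatorial abundance of hyperbolic times of Propositions~\ref{cilindros ruins} and \ref{temp hiperbolicos}. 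Rerunning those arguments verbatim with $\tau$ in place of $\nu$, I would obtain: the $\tau$-measure of the cylinders in $P^{n}$ decays exponentially, the first hyperbolic time $n_{1}$ is $\tau$-integrable, $\tau(\mathcal{H})=1$, and there is $K'>0$ with
$$(K')^{-1}\le\frac{\tau(R^{n})}{\exp\big(S_{n}\phi(x)-Pn\big)}\le K',\qquad P=\log\lambda,$$
for every hyperbolic cylinder $R^{n}$ and every $x\in R^{n}$.

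Next I would pass from $\tau$ to $\eta=h\tau$. As $h$ is positive and continuous on the compact space $\Lambda$, one has $0<\inf h\le\sup h<\infty$, so $\eta$ and $\tau$ are equivalent; consequently $\eta(\mathcal{H})=1$, $n_{1}$ is $\eta$-integrable, and the Gibbs bound above holds for $\eta$ with $K'$ replaced by $K:=K'(\sup h)/(\inf h)$. Since $\eta$ is $G$-invariant (being an equilibrium state), $\eta$-integrability of $n_{1}$ together with $\eta(\mathcal{H})=1$ allow the argument of Proposition~\ref{seq nao-lacunar} (i.e.\ Proposition~3.8 of \cite{OV08}, whose only inputs were invariance, $\mathcal{H}$ having full measure, and integrability of $n_{1}$) to be repeated, giving that for $\eta$-almost every $x$ the sequence of hyperbolic times $n_{k}(x)$ is non-lacunary. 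Combining this with the Gibbs estimate along hyperbolic cylinders yields, for $\eta$-a.e.\ $x$ and $a_{k}=n_{k}(x)$,
$$K^{-1}\le\frac{\eta\big(R^{a_{k}}(x)\big)}{\exp\big(S_{a_{k}}\phi(x)-P a_{k}\big)}\le K,$$
which is precisely the definition of a non-lacunary Gibbs measure.

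The step I expect to be the main obstacle is the transfer in the middle paragraph: one must check, result by result, that each statement of Section~\ref{prova teo B} cited there genuinely uses the reference measure only through the relation $\mathcal{L}_{\phi}^{\ast}(\cdot)=\lambda(\cdot)$ and not through the particular Mazur-type construction of $\nu$ (in particular, that $\tau$ still charges $\epsilon$-balls, so that the lower bound in Proposition~\ref{Gibbs} survives). This is a careful rereading of those proofs rather than a new argument, and no estimate has to be redone from scratch.
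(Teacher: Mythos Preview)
Your proposal is correct and follows the same line as the paper: divide by $h$ to obtain an eigenmeasure $\tau=\frac{1}{h}\eta$ via Lemma~\ref{est eq automedida}, observe that the arguments of Section~\ref{prova teo B} (in particular Proposition~\ref{Gibbs} and Propositions~\ref{dec exp}--\ref{H total}) used only $\mathcal{L}_{\phi}^{\ast}\nu=\lambda\nu$ and so apply verbatim to $\tau$, and then multiply back by the bounded positive $h$ and invoke Proposition~\ref{seq nao-lacunar} to get the non-lacunary sequence of hyperbolic times for the invariant measure $\eta$. The paper's proof is terser, essentially taking for granted the transfer you spell out in your second paragraph, and your explicit identification of the point to check (that the lower bound in Proposition~\ref{Gibbs} survives for any dual eigenmeasure) is a genuine care the paper leaves implicit.
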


\begin{proof}
Let $\eta$ be an equilibrium state. By the Lemma~\ref{est eq automedida} the measure $\tau = \frac{1}{h}\eta$ is an eigenmeasure of the dual operator, $\mathcal{L}_{\phi}^{\ast}$, associated to the eigenvalue $\lambda$. Therefore, $\tau$ is a non-lacunary Gibbs measure. 
On the other hand, we can write $\eta= h\tau$. So, the Proposition~\ref{Gibbs} guarantees that $\tau$ is a non-lacunary Gibbs measure.
\end{proof}

\begin{Lema}\label{Gibbs equivalentes}
Let $\mu_ {1} $ and $ \mu_{2}$ be  expanding non-lacunary Gibbs probability measures.
Then $\mu_{1}$ and $\mu_{2}$ are equivalent.
\end{Lema}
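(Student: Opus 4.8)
The plan is to prove the stronger assertion that $\mu_1$ and $\mu_2$ are mutually absolutely continuous with Radon--Nikodym derivatives bounded away from $0$ and from $\infty$. The crucial observation is that, although $\mu_1$ and $\mu_2$ are a priori only non-lacunary Gibbs, each of them is in fact Gibbs on \emph{every} hyperbolic cylinder, with one and the same constant $P=\log\lambda$. Indeed, as noted at the end of Section~\ref{Existence2}, an expanding non-lacunary Gibbs measure is an equilibrium state; hence by Lemma~\ref{est eq automedida} the finite measures $\tau_i:=h^{-1}\mu_i$, $i=1,2$, are eigenmeasures of $\mathcal{L}_{\phi}^{\ast}$ for the eigenvalue $\lambda=r(\mathcal{L}_{\phi})$. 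The proof of Proposition~\ref{Gibbs} uses nothing about the reference measure beyond the identity $(\mathcal{L}_{\phi}^{\ast})^n\nu=\lambda^n\nu$, so it applies verbatim to $\tau_1$ and to $\tau_2$; multiplying by $h$, which is bounded away from $0$ and from $\infty$ by Lemma~\ref{h positivo}, one obtains a constant $C_0>0$ with
$$C_0^{-1}\,\mu_2(R^n_h)\;\le\;\mu_1(R^n_h)\;\le\;C_0\,\mu_2(R^n_h)$$
for every hyperbolic cylinder $R^n_h$, of any length $n$.

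Next I would produce, at $\mu_i$-almost every point, a nested neighbourhood basis consisting of hyperbolic cylinders. Since $\mu_1$ and $\mu_2$ are expanding, $\mu_i(\mathcal H)=1$, so for $\mu_i$-almost every $x$ the point $x$ belongs to infinitely many hyperbolic cylinders, whose lengths necessarily tend to $\infty$; and by the estimate $d(x,y)^{\delta}\le e^{-nc}$ established inside the proof of Proposition~\ref{distorcao} for two points of a hyperbolic cylinder of length $n$, the diameters of these cylinders tend to $0$. Thus, for $\mu_i$-almost every $x$, the hyperbolic cylinders containing $x$ form a decreasing sequence of neighbourhoods shrinking to $\{x\}$.

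Finally I would upgrade the cylinder-by-cylinder comparison of the first paragraph to a comparison of $\mu_1$ and $\mu_2$ on all Borel sets by a Vitali-type covering argument. Given a Borel set $A$ and $\varepsilon>0$, pick an open $U\supseteq A$ with $\mu_2(U)<\mu_2(A)+\varepsilon$. By the second paragraph, $\mu_1$-almost every $x\in A$ lies in some hyperbolic cylinder contained in $U$; since any two cylinders are either nested or disjoint, the maximal elements of the family of all such cylinders are pairwise disjoint, countably many, and still cover $\mu_1$-almost all of $A$. Denoting them $\{C_i\}$ and using the first paragraph,
$$\mu_1(A)\;\le\;\sum_i\mu_1(C_i)\;\le\;C_0\sum_i\mu_2(C_i)\;=\;C_0\,\mu_2\Bigl(\bigcup_i C_i\Bigr)\;\le\;C_0\,\mu_2(U)\;<\;C_0\bigl(\mu_2(A)+\varepsilon\bigr);$$
letting $\varepsilon\to0$ gives $\mu_1(A)\le C_0\,\mu_2(A)$ for all Borel $A$, so $\mu_1\ll\mu_2$ with density at most $C_0$. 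Exchanging the roles of the two measures gives $\mu_2\le C_0\,\mu_1$, and hence $\mu_1$ and $\mu_2$ are equivalent. The step I expect to be most delicate is this last one: one must know that the hyperbolic cylinders through almost every point genuinely form a differentiation basis, i.e.\ that they shrink to the point, which is exactly what the second paragraph supplies, and this is also where the ``expanding'' hypothesis is indispensable --- off $\mathcal H$ no such basis is available, and a singular component of one measure with respect to the other could be hidden there. (Equivalently, one may run a martingale argument: $M_n(x)=\mu_1(R^n(x))/\mu_2(R^n(x))$ converges $\mu_2$-a.e.\ to the absolutely continuous density, which by the first two paragraphs lies in $[C_0^{-1},C_0]$, while any singular part would be carried by $\mathcal H$, where $M_n$ stays bounded along the hyperbolic times.)
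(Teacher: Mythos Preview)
The paper itself does not prove this lemma; it defers to \cite{OV08} and adds only that the argument simplifies here because the Markov rectangles have empty boundaries. Your write-up is a correct, self-contained argument, so there is no gap to flag.

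Your route, however, differs from the one in \cite{OV08}. There the lemma is proved as a stand-alone fact about non-lacunary Gibbs measures: each $\mu_i$ is a priori Gibbs only along its own point-dependent non-lacunary sequence $(a_j^{(i)}(x))_j$, and one compares the two measures by interpolating between these sequences and controlling boundaries of cylinders. You instead exploit machinery already built in this paper: since an expanding non-lacunary Gibbs measure is an equilibrium state (end of Section~\ref{Existence2}), Lemma~\ref{est eq automedida} turns $h^{-1}\mu_i$ into an eigenmeasure of $\mathcal{L}_{\phi}^{\ast}$, and rerunning the computation of Proposition~\ref{Gibbs} yields a \emph{uniform} Gibbs bound for each $\mu_i$ on \emph{every} hyperbolic cylinder with the common constant $P=\log\lambda$. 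This upgrade removes the interpolation step entirely and reduces the proof to the Vitali-type covering, in which the paper's ``empty boundaries'' remark is exactly your ``cylinders are nested or disjoint'' observation. The price of the detour is that your argument depends on results placed later in the paper (Proposition~\ref{jacobiano est eq} and Lemma~\ref{est eq automedida}), so the lemma is no longer a statement about abstract non-lacunary Gibbs measures; there is no circularity in the logical flow, but it is worth being aware of this dependence.
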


This result is proven in \cite{OV08}. In our context, it follows easier since the rectangles have empty boundaries.
%
%\begin{proof}
%Consider $R^{n}$ a cylinder of length $n$.
%By the definition of non-lacunary Gibbs measure, we have:
%    $$K^{-1}\leq \frac{\mu_{i}(R^{n})}{\exp(S_{n}\phi(x)- nP)} \leq K , \quad i=1,2  \ \mbox{and for all} \ x \in R^{n}     .$$ 
%The,
%\begin{eqnarray*}
%K^{-2} \mu (R^{n}) &\leq & K^{-2} \cdot K \exp(S_{n}\phi(x)- nP) \\
%                   &\leq & K^{-1} \exp (S_{n}\phi(x)- nP) \\
%                   &\leq & \mu_{1}(R^{n}) \\
%                   &\leq & K \exp (S_{n}\phi(x)- nP)\\
%                   &\leq & K^{2}\cdot K{-1} \exp(S_{n}\phi(x)- nP)\\
%                   &\leq & K^{2} \mu_{1}(R^{n}). 
%\end{eqnarray*} 
%If we put $M = K^{2}$, we have:
%    $$ M^{-1} \mu_{1}(R^{n}) \leq \mu_{2}(R^{n}) \leq M \mu_{1}(R^{n}).   $$
%   
%\end{proof}  
%\marginpar{escrever uma conclusao decente}

We are now able to finish the proof of the Theorem~\ref{Teorema Principal}. Let $\eta$ be an ergodic equilibrium state for the system $(G,\phi)$. Then $\eta $ is expanding, and  Proposition~\ref{eq. eh gibbs} ensures that $\eta$ is a non-lacunary Gibbs invariant measure. On the other hand, the probability measure $\mu=h\nu$ (where $h$ and $\nu$ are given by the Theorem~\ref{Teorema B} is also expanding and an invariant non-lacunary Gibbs  measure. But the Lemma~\ref{Gibbs equivalentes} ensures that two expanding non-lacunary Gibbs probability measures are equivalent. Then $\mu$ is ergodic and we have $\mu = \eta $.
This concludes the proof of Theorem~\ref{Teorema Principal}.

%%%%%%%%%%%%%%%%%%%%%%%%%%%%%%%%%%%%%%%%%%%%%%%%%%% %%%%%%%%%%%%%%%%%%%%%%%%%%%%%%%%%%%%%%%%%%%%%%%
%%%%%             EQ PARA HORSESHOES                                                                     %%%%%%%%%%%%%%%%%%%%   
%%%%%%%%%%%%%%%%%%%%%%%%%%%%%%%%%%%%%%%%%%%%%%%%%%%%
%%%%%%%%%%%%%%%%%%%%%%%%%%%%%%%%%%%%%%%%%%%%%%%%%%%%%%%%%%%%%%%%%%%%%%%%%%%%%%%%%%%%%%%%%%%%%%%%%%%%%%%%%%%%%

\section{Equilibrium states for the Horseshoe} \label{back}

%Define $\mu^{\star}$ by:
%     $$\mu^{\star}(A) = \mu \phi (\pi (A))   \quad \forall A \ \mbox{Borelian in }$$

In this section we prove Theorem~\ref{Teo C}.
Let $G: \Lambda \to \Lambda$ be the map defined in the section~\ref{main theorems}.
Consider also the high dimension Horseshoe $F: \Omega \to \Omega$ also defined in section~\ref{main theorems} and consider its inverse $F^{-1}$.

We define a projection of the parallelepipeds $\tilde{R}_0$ and $\tilde{R}_1$ onto the planes $P_0$ and $P_1$, $\pi:\tilde{R}_1 \cup \tilde{R}_2  \to P_0 \cup P_1$ by:
$$\pi : (x,y,z)= \left\{ \begin{array}{rc}
(x,y,0),  &\mbox{if}  \quad (x,y,z) \in \tilde{R}_0 \\
(x,y,\frac{5}{6}),  &\mbox{if}  \quad (x,y,z) \in \tilde{R}_1
\end{array}
\right.
$$
It is straightforward to check that $\pi$ is continuous, surjective and  $$\pi \circ F^{-1}= G \circ \pi.$$

\begin{Lema}\label{zero na fibra}
For each $X \in \Lambda$ we have $h(F^{-1}, \pi^{-1}(X))= 0.$
\end{Lema}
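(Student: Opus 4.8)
The plan is to observe that $\pi^{-1}(X)$ is a segment lying in the uniformly contracting direction of $F^{-1}$, so that the Bowen dynamical metric on it coincides with the ambient one; the Carathéodory construction defining the topological entropy of the set $\pi^{-1}(X)$ then produces the value $0$.

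First I would describe the fibre explicitly. By the definition of $\pi$, for any $X$ in its image (understood inside the maximal invariant set $\Omega$ of $F$, where $F^{-1}$ is defined) the set $\pi^{-1}(X)$ is a segment parallel to the $z$-axis: if the $z$-coordinate of $X$ equals $0$ then $\pi^{-1}(X)\subset\{x\}\times\{y\}\times[0,1/6]\subset\tilde R_0$ for suitable $x,y$, and if it equals $5/6$ then $\pi^{-1}(X)\subset\{x\}\times\{y\}\times[5/6,1]\subset\tilde R_1$. In either case $\pi^{-1}(X)$ is contained in a segment of length $1/6$ lying entirely inside one of the two domains $\tilde R_0,\tilde R_1$, so a single branch of $F^{-1}$ is defined on all of it. From the explicit formulas $F_0^{-1}(u,v,w)=(u/\rho,\,f^{-1}(v),\,w/\beta)$ and $F_1^{-1}(u,v,w)=((3/4-u)/\rho,\,1-v/\sigma,\,w/\beta_1+5/6)$, together with the relation $\pi\circ F^{-1}=G\circ\pi$, one gets $F^{-1}\bigl(\pi^{-1}(X)\bigr)\subset\pi^{-1}\bigl(G(X)\bigr)$, while $F^{-1}$ restricted to $\pi^{-1}(X)$ multiplies the $z$-coordinate by $1/\beta<1/6$ (in $\tilde R_0$) or by $1/\beta_1<1/3$ (in $\tilde R_1$), i.e.\ by a factor at most $1/3$. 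Since $G(\Lambda)\subset\Lambda$, the point $G^n(X)$ lies in $\Lambda$ for all $n\ge0$, so by induction $F^{-n}(\pi^{-1}(X))$ is, for every $n\ge0$, a set parallel to the $z$-axis contained in $\pi^{-1}(G^n(X))$, of $z$-diameter at most $(1/6)\,3^{-n}$.

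Second, I would compare the Bowen metric $d_n(p,q)=\max_{0\le j<n}d\bigl(F^{-j}p,F^{-j}q\bigr)$ with the ambient metric on $\pi^{-1}(X)$. For $p,q\in\pi^{-1}(X)$ and every $j\ge0$, the points $F^{-j}p$ and $F^{-j}q$ lie in the same vertical set $F^{-j}(\pi^{-1}(X))$, hence share their $x$- and $y$-coordinates, so $d(F^{-j}p,F^{-j}q)$ is the difference of their $z$-coordinates, which equals $c_j\,d(p,q)$ with $c_0=1$ and $c_j\le 3^{-j}$ decreasing in $j$. Therefore $d_n(p,q)=d(p,q)$ for all $n\ge1$ and all $p,q\in\pi^{-1}(X)$: the dynamical metric collapses onto the ambient one.

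Finally, I would insert this into the Carathéodory-type definition of the topological entropy of a subset used in \cite{Pesin97}. Fix $\epsilon>0$. Since $d_n=d$ on $\pi^{-1}(X)$, for each $x\in\pi^{-1}(X)$ and each $n$ the dynamical ball $B_n(x,\epsilon)$ meets $\pi^{-1}(X)$ in an ordinary $\epsilon$-ball; as $\pi^{-1}(X)$ is contained in a segment of length $1/6$, for every $N\in\mathbb N$ it can be covered by $M(\epsilon):=\lceil 1/(6\epsilon)\rceil+1$ dynamical balls $B_N(x_i,\epsilon)$ with centres $x_i\in\pi^{-1}(X)$. The weight of this cover in the Carathéodory sum at exponent $s>0$ is $M(\epsilon)\,e^{-sN}$, which tends to $0$ as $N\to\infty$; hence the associated outer measure of $\pi^{-1}(X)$ is zero for every $s>0$, which forces $h(F^{-1},\pi^{-1}(X),\epsilon)=0$, and letting $\epsilon\to0$ gives $h(F^{-1},\pi^{-1}(X))=0$. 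The argument has no genuinely hard step; the only point demanding care is the itinerary bookkeeping, namely the verification that $\pi^{-1}(X)$ and each of its $F^{-1}$-preimages lies inside a single one of $\tilde R_0,\tilde R_1$, which is what allows a single contracting branch to be iterated indefinitely and makes the Bowen metric degenerate.
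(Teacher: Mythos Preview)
Your argument is correct and follows the same route as the paper: the fibre $\pi^{-1}(X)$ is a vertical segment on which $F^{-1}$ acts as a uniform contraction, so the Bowen dynamical metric $d_n$ coincides with the ambient metric $d$ there and any entropy count stabilises in $n$. The paper packages this via Bowen's $(n,\epsilon)$-separated sets---their maximal cardinality $S(n,\epsilon,\pi^{-1}(X))$ is then independent of $n$, whence $\tfrac{1}{n}\log S\to 0$---rather than through the Carath\'eodory construction you invoke, and one bookkeeping detail is slightly off in your write-up (the contraction factor on the fibre is governed by the $x$-coordinate of $X$, i.e.\ by which image $F_i(\tilde R_i)$ the fibre lies in, not by which $\tilde R_i$ contains it), but your uniform bound ``at most $1/3$'' holds in every case, so the proof goes through unchanged.
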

\begin{proof}
Since the inverse horseshoe contracts in the Oz direction, for each $X \in \Lambda$ its restriction to a fiber $\pi^{-1}(X)$,

 $F^{-1}|_{\pi^{-1}(X)}: \pi^{-1}(X) \to \pi^{-1}(G(X))$ is a contraction.

The number $S(n,\epsilon,\pi^{-1}(X))$ which is de maximum of the cardinalities of the sets $(n,\epsilon)$-separated in $\pi^{-1}(X)$ is constant and depends only on $\epsilon$ not on $n$. Thus
   $$h(F^{-1}, \pi^{-1}(X))= \displaystyle\lim_{\epsilon \to 0} \displaystyle\lim_{n \to 0} \frac{1}{n}\log S(n,\epsilon, \pi^{-1}(X))= 0.$$
\end{proof}

Since the projection $\pi$ is actually a semiconjugacy between the inverse Horseshoe $F^{-1}$ and the map $G$, we have [See \cite{Bowen71}]:
  $$ h_{top}(F^{-1}) \leq h_{top}(G) +\sup \left\{h(F^{-1}, \pi^{-1}(X)), X \in \Lambda \right\}. $$
By the Lema~\ref{zero na fibra} we get:
      $$h_{top}(F^{-1}) \leq h_{top}(G).  $$
On the other hand, because $\pi$ is a semiconjugacy we have the other inequality imediately (see \cite{Bowen75}), wich gives us:
\begin{equation}\label{igualdade das entropias}
h_{top}(F^{-1}) = h_{top}(G).
\end{equation}
Thus we proved that the topological entropy of these two maps are equal.

\subsection{Existence}\label{Existence}
Consider the system  $(G,\phi)$, with $G$ as defined in the section~\ref{main theorems} and $\phi$ a potential  H\"older continuous with small variation:
    $$ \sup \phi - \inf \phi < \frac{log(\omega)}{2}.  $$
Let $\mu_{\phi}$ be the unique equilibrium state associated to the system $(G,\phi)$ obtained in Theorem~\ref{Teorema Principal}. Let $\mathcal{A}_{\Lambda}$ be the Borel $\sigma$-algebra in $\Lambda$ and define $\mathcal{A}_0:= \pi^{-1}({A_{\Lambda}})$ .

Thus, $\mathcal{A}_0$ is a $\sigma$-algebra  in the Horseshoe, whose elements are given by $A=\pi^{-1}(B)$, where $B$ is a Borelian.
Notice that $\mathcal{A}_0 \subset F^{-1}(\mathcal{A}_0)$. 
If for each $n\in \mathbb{N}$ we define $\mathcal{A}_n:= F^{-n}(\mathcal{A}_0) $ we have a increasing sequence of $\sigma$-algebras:
             $$ \mathcal{A}_0 \subset \mathcal{A}_1 \subset \mathcal{A}_2 \subset \cdots \subset \mathcal{A}_n \subset \cdots  $$
For each $n$ define a probability measure $\mu^{\star}_{n}: \mathcal{A}_n \to [0,1]$ by
         $$ \mu^{\star}_{n}(F^{-n}(A_0))= \mu_{\phi}(\pi(A_0)) \quad \mbox{for all} \ A_0 \in \mathcal{A}_0. $$
Notice that $\mu^{\star}_{n}$ depends also on $\phi$.
Using the semiconjugacy it is straightforward to prove that each $\mu^{\star}_{n}$ is invariant by $F^{-1}$.
Finally, we consider $\mathcal{A}:= \displaystyle\cup_{n=0}^{\infty}\mathcal{A}_n$ a $\sigma$-algebra in $\Omega$.

Define $\mu^{\star}: \mathcal{A} \to [0,1]$ by
   $$\mu^{\star}(A)= \mu^{\star}_n (A)   \quad \mbox{if} \  A \in \mathcal{A}_n  . $$
It is easy to see that $\mu^{\star}$ is well-defined. We have to prove that $\mu$ is  $\sigma$-additive. For this we will use the following criterium from classical measure Theory.

\begin{Prop}
Let $\mathcal{A}$ be a $\sigma$-algebra on $\Omega$ and $\mu^{\star}: \mathcal{A} \to [0,1]$  satisfying 
   $$\mu^{\star}(A_1 \cup \cdots \cup A_n ) = \sum_{i=1}^{n}\mu^{\star}(A_i) $$
for every finite family $(A_i)$ of disjoints sets on $\mathcal{A}$. If there exists a family $\mathcal{C} \subset \mathcal{A}$ which satisfies:
\begin{enumerate}
\item $\mathcal{C}$ is a compact class:  if $C_1 \supset C_2 \supset \cdots \supset C_n \supset \cdots$  are elements of the family $\mathcal{C}$ then $\bigcap_{n=1}^{\infty}C_n \neq \emptyset$.\\
\item $\mathcal{C}$ has the approximation property:  for all $A \in \mathcal{A}$ we have
$$\mu^{\star}(A)= \sup\{ \mu^{\star}(C) | C \subset A, C \in \mathcal {C}  \}.$$
\end{enumerate}
then $\mu^{\star}$ is a probability mesure on $\mathcal{A}$.
\end{Prop}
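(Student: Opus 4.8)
The plan is to deduce the $\sigma$-additivity of $\mu^{\star}$ from its finite additivity, using the compact class $\mathcal{C}$ only to establish continuity at the empty set. First I would record the standard reduction: a finitely additive, $[0,1]$-valued set function on a $\sigma$-algebra $\mathcal{A}$ is automatically countably additive provided $\mu^{\star}(B_n)\to 0$ whenever $(B_n)$ is a decreasing sequence in $\mathcal{A}$ with $\bigcap_n B_n=\emptyset$. Indeed, given pairwise disjoint $E_1,E_2,\dots\in\mathcal{A}$ with union $E$, the sets $B_n:=E\setminus(E_1\cup\cdots\cup E_n)$ decrease to $\emptyset$, they lie in $\mathcal{A}$, and finite additivity gives $\mu^{\star}(E)=\sum_{k=1}^{n}\mu^{\star}(E_k)+\mu^{\star}(B_n)$ for every $n$; letting $n\to\infty$ yields $\mu^{\star}(E)=\sum_{k}\mu^{\star}(E_k)$. (Monotonicity and finite subadditivity of $\mu^{\star}$, needed throughout, follow at once from finite additivity and non-negativity.)

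Next I would enlarge $\mathcal{C}$ so that it is closed under finite intersections, replacing it by $\mathcal{C}':=\{\,C_1\cap\cdots\cap C_m : m\ge 1,\ C_i\in\mathcal{C}\,\}$. Because $\mathcal{A}$ is a $\sigma$-algebra we still have $\mathcal{C}'\subset\mathcal{A}$; the approximation property passes to $\mathcal{C}'$ since $\mathcal{C}\subset\mathcal{C}'$; and $\mathcal{C}'$ is still a compact class, because a decreasing sequence of members of $\mathcal{C}'$ can be refined to a decreasing sequence of finite intersections of members of $\mathcal{C}$, to which the hypothesis applies. So I may assume from now on that $\mathcal{C}$ itself is stable under finite intersections.

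Then I would run the core argument. Let $(B_n)$ decrease to $\emptyset$ and suppose, for contradiction, that $\lim_n\mu^{\star}(B_n)=3\varepsilon>0$, so that $\mu^{\star}(B_n)\ge 3\varepsilon$ for all $n$ by monotonicity. Using the approximation property, choose $C_n\in\mathcal{C}$ with $C_n\subset B_n$ and $\mu^{\star}(B_n\setminus C_n)<\varepsilon\,2^{-n}$, and put $D_n:=C_1\cap\cdots\cap C_n\in\mathcal{C}$. Then $D_n\subset B_n$, and since $B_n\subset B_k$ for $k\le n$ one has $B_n\setminus D_n=\bigcup_{k\le n}(B_n\setminus C_k)\subset\bigcup_{k\le n}(B_k\setminus C_k)$, hence $\mu^{\star}(D_n)=\mu^{\star}(B_n)-\mu^{\star}(B_n\setminus D_n)>3\varepsilon-\sum_{k\le n}\varepsilon\,2^{-k}>2\varepsilon>0$. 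In particular each $D_n$ is non-empty, $(D_n)$ is a decreasing sequence in $\mathcal{C}$, and the compact-class property forces $\bigcap_n D_n\ne\emptyset$, contradicting $\bigcap_n D_n\subset\bigcap_n B_n=\emptyset$. Therefore $\mu^{\star}(B_n)\to 0$, and by the first step $\mu^{\star}$ is a measure on $\mathcal{A}$; it is a probability measure because $\Omega\in\mathcal{A}_0$ and $\mu^{\star}(\Omega)=\mu_\phi(\Lambda)=1$.

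The hard part will be exactly this last step: producing a decreasing sequence \emph{inside} $\mathcal{C}$ that still carries almost all the mass of the $B_n$. The reduction to a class closed under finite intersections is what legitimizes $D_n\in\mathcal{C}$, and the geometric tolerances $\varepsilon\,2^{-n}$ together with the inclusion $B_n\setminus D_n\subset\bigcup_{k\le n}(B_k\setminus C_k)$ are the only slightly delicate bookkeeping; everything else is routine. In the intended application $\mathcal{C}$ will be built from closed (hence compact) cylinder-type sets in the compact space $\Omega$, for which the compact-class and approximation properties are immediate.
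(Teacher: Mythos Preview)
The paper does not prove this proposition at all: it is quoted as a ``criterium from classical measure Theory'' and then applied. So there is no proof in the paper to compare against; your argument is the standard textbook one (continuity at $\emptyset$ via a compact approximating class, as in Marczewski/Neveu/Bogachev).

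One genuine wrinkle deserves attention. Your passage from $\mathcal{C}$ to its closure $\mathcal{C}'$ under finite intersections, with the claim that $\mathcal{C}'$ is again a compact class, is not justified by the hypothesis \emph{as the paper states it} (decreasing sequences only). That implication is false in general: take $\Omega=\mathbb{N}$ and $\mathcal{C}=\{\mathbb{N}\setminus\{n\}:n\in\mathbb{N}\}$; the sets are pairwise incomparable, so the decreasing-sequence condition holds vacuously, yet in $\mathcal{C}'$ the sequence $\mathbb{N}\setminus\{1\}\supset\mathbb{N}\setminus\{1,2\}\supset\cdots$ has empty intersection. The usual formulation of ``compact class'' is the finite-intersection-property version (every countable subfamily with the FIP has nonempty intersection), and with that hypothesis no enlargement is needed: one checks directly that the family $\{C_n\}$ you build has the FIP because $C_{n_1}\cap\cdots\cap C_{n_k}\supset D_{\max n_i}\neq\emptyset$. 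In the paper's actual application $\mathcal{C}$ consists of honest compact subsets of the compact space $\Omega$, so the FIP version holds automatically and your argument goes through; but as a proof of the proposition exactly as stated, that step is a gap.
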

We will construct a family $\mathcal{C}$ which satisfies the conditions (i) and (ii) in Proposition above.

We consider $K_{\Lambda}$ the collection of all compacts on $\Lambda$ and we define $K_0 := \pi ^{-1}(K_{\Lambda})$ a subfamily of the $\sigma$-algebra $\mathcal{A}_0$. 

We define $\mathcal{C}:= \bigcup_{n=0}^{\infty} F^{-n}(K_0) $ which is contained in $\mathcal{A}$ and is a compact class with the approximation property:
\begin{enumerate}
\item Let  $C_1 \supset C_2 \supset \cdots \supset C_n \supset \cdots$ be sets of $\mathcal{C}$. For every $i \in \mathbb{N}$ there exists $D_i \subset K_{\Lambda}$ and there exists $n_i \in \mathbb{N}$ such that $C_i = F^{-n_i}(\pi^{-1}(D_i))$. Since $\pi^{-1}(D_i)$  is a closed set on a compact space $\Lambda$,  $\pi^{-1}(D_i)$ is compact. By the continuity of $F^{-1}$, $(C_i)_{i \in \mathbb{N}}$ is a family of nested compact sets, therefore $\bigcap_{n=1}^{\infty}C_n \neq \emptyset$.\\
\item Let $A \in \mathcal{A}$. Using the regularity of  the measure $\mu_{\phi}$  on $\Lambda$ we have $\mu_{\phi}(B)= \sup\{ \mu_{\phi}(D) | D \subset B, D \in K_{\Lambda}  \}$ for all $D \in \mathcal{A}_{\Lambda}$.
By the definition of $\mathcal{C}$,  for all $C \in \mathcal{C}$ there exist $D \in K_{\Lambda} $ and $n \in \mathbb{N}$ such that $C= F^{-n}(\pi^{-1}(D))$.
Thus
$ \mu^{\star}(C)  = \mu^{\star}(F^{-n}(\pi^{-1}(D))) = \mu_{\phi}(\pi( \pi^{-1} (D)) = \mu_{\phi}(D). $ Then
$$\sup\{ \mu_{\phi}(D) | D \subset B, D \in K_{\Lambda}  \} = \sup\{ \mu_{\star}(C) | C \subset A, C \in \mathcal{C}  \}.$$ 
To finish it is enough to notice that for all $ A \in \mathcal{A}$ there exists $B \in K_{\Lambda}$ such that $\mu^{\star}(A) =\mu_{\phi}(B) $.
\end{enumerate}
Then, by the criterium we proved that $\mu^{\star}$ is a probability measure on $\mathcal{A}$.
Since every $\mu_n^{\star}$ is invariant by $F^{-1}$ the measure $\mu^{\star}$ is also  $F^{-1}$-invariant.
\begin{Lema}
The  extension of the $\sigma$-algebra $\mathcal{A}$ coincides with the Borel $\sigma$-algebra on $\Omega$.
\end{Lema}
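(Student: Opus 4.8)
The plan is to realise $\Omega$ as a topological subspace of a countable product of copies of $\Lambda$ through the semiconjugacy, and then to read the equality of $\sigma$-algebras off that embedding.

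First I would consider the map $\Phi\colon\Omega\to\Lambda^{\mathbb{N}}$ given by $\Phi(p)=\bigl(\pi(p),\pi(F(p)),\pi(F^{2}(p)),\dots\bigr)$, which is well defined because $\Omega$ is $F$-invariant and contained in $\tilde{R}_0\cup\tilde{R}_1$, and continuous because each coordinate $p\mapsto\pi(F^{n}(p))$ is continuous. By definition a set in $\mathcal{A}_{n}$ has the form $(F^{n})^{-1}(\pi^{-1}(B))$ with $B\in\mathcal{A}_{\Lambda}$, and $(F^{n})^{-1}(\pi^{-1}(B))=\Phi^{-1}(\mathrm{pr}_{n}^{-1}(B))$, where $\mathrm{pr}_{n}$ is the $n$-th coordinate projection of $\Lambda^{\mathbb{N}}$. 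Hence the $\sigma$-algebra extending $\mathcal{A}=\bigcup_{n\ge0}\mathcal{A}_{n}$ equals $\Phi^{-1}\bigl(\bigotimes_{n\ge0}\mathcal{A}_{\Lambda}\bigr)$, and since $\Lambda$ is second countable the product $\sigma$-algebra $\bigotimes_{n\ge0}\mathcal{A}_{\Lambda}$ coincides with the Borel $\sigma$-algebra $\mathcal{B}(\Lambda^{\mathbb{N}})$. Thus the extension of $\mathcal{A}$ is $\Phi^{-1}\bigl(\mathcal{B}(\Lambda^{\mathbb{N}})\bigr)$.

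The crux is the injectivity of $\Phi$, equivalently the statement that $\mathcal{A}$ separates points of $\Omega$: if $\pi(F^{n}(p))=\pi(F^{n}(q))$ for every $n\ge0$ then $p=q$. Let $p\neq q$. If $p$ and $q$ lie in different slabs, or in the same slab with different $(x,y)$-coordinates, then already $\pi(p)\neq\pi(q)$. Otherwise $p$ and $q$ share the same $(x,y)$ and the same slab, so they lie on a common vertical fiber and $d_{0}:=|z(p)-z(q)|>0$, where $z(\cdot)$ denotes the third coordinate; put $d_{n}=|z(F^{n}(p))-z(F^{n}(q))|$. On each slab the third component of $F$ depends only on $z$ and is affine, with slope $\beta>6$ on $\tilde{R}_0$ and slope $\beta_{1}\in(3,4)$ on $\tilde{R}_1$; therefore, as long as $F^{n}(p)$ and $F^{n}(q)$ lie in a common slab, $d_{n+1}>3\,d_{n}$, while lying in a common slab also forces $d_{n}\le1/6$. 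Were this to hold for all $n$ we would get $1/6\ge d_{n}>3^{n}d_{0}\to\infty$, a contradiction. Hence $F^{n}(p)$ and $F^{n}(q)$ lie in different slabs for some $n$, whence $\pi(F^{n}(p))\neq\pi(F^{n}(q))$. This is complementary to Lemma~\ref{zero na fibra}: there one exploits that $F^{-1}$ \emph{contracts} the fibers, whereas here one uses that $F$ \emph{expands} them.

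Finally I would assemble the pieces: being a continuous injection of the compact space $\Omega$ into the metrizable space $\Lambda^{\mathbb{N}}$, the map $\Phi$ is a homeomorphism onto $\Phi(\Omega)$, which is compact, hence closed and in particular Borel in $\Lambda^{\mathbb{N}}$. Consequently $\Phi$ is a Borel isomorphism of $\Omega$ onto $\Phi(\Omega)$, so $\Phi^{-1}\bigl(\mathcal{B}(\Lambda^{\mathbb{N}})\bigr)=\mathcal{B}(\Omega)$, and together with the second paragraph this shows that the $\sigma$-algebra extending $\mathcal{A}$ is precisely the Borel $\sigma$-algebra of $\Omega$. The only genuinely substantial point is the injectivity of $\Phi$; the remaining steps are routine measure theory and general topology.
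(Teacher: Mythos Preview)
Your argument is correct and takes a genuinely different route from the paper's. The paper proceeds directly: it shows that each point $X\in\Omega$ admits a fundamental system of neighborhoods lying in $\mathcal{A}$, namely the sets $B_n(X)=F^{-n}\bigl(\pi^{-1}(\mathcal{B}(\pi(F^{n}(X)),\delta_n))\bigr)$, where $\delta_n$ is chosen via the uniform continuity of $G^{n}$ so that $\pi$-images of points in $B_n(X)$ are $1/n$-close to $\pi(X)$, while the fiber ($z$-)direction is squeezed by the contraction of $F^{-n}$; the diameter of $B_n(X)$ is then bounded by $1/n$ plus an exponentially small term, and this suffices to generate all Borel sets. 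Your approach instead packages the same dynamical input---separation of fibers under forward iteration of $F$---into the injectivity of the coding map $\Phi:\Omega\to\Lambda^{\mathbb{N}}$, and then invokes the standard facts that a continuous injection of a compact space is a homeomorphism onto its image and that the product $\sigma$-algebra equals the Borel $\sigma$-algebra for second-countable factors. Both proofs rest on the same dynamical point (hyperbolicity transverse to the fibers of $\pi$); the paper's is more elementary and self-contained, while yours is more conceptual and immediately generalises to any continuous factor map of a compact system whose fibers are separated by forward iterates.
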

\begin{proof}
Since $\pi$ is continuous we have that $\mathcal{A}$ is contained in the Borel $\sigma$-algebra. It is enough to prove that for all $ X \in \Omega$ there exists a fundamental system of neighborhoods of $X$ contained in $\mathcal{A}$.

Let $X \in \Omega$ and remember that $\rho < 1$ is the factor of contraction in the direction of the $z$-axis. For each $n \in \mathbb{N}$, by the uniform continuity of $G^n$ there exists $\delta_n$ such that $d(z, w) < \delta_n $ implies $d(G^n(y),G^n (z)) < \frac{1}{n}$. 

We define $B_n(X)= F^{-n}(\pi^{-1}(\mathcal{B}(\pi (F^{n}(X)), \delta_n)))$.
Therefore, for every $n \in \mathbb{N}$ and $X \in B_n(X)$, we have 
$diam B_n(X)= \rho^n + \frac{1}{n}$
which goes to zero when $n$ goes to infinity.
\end{proof}

\begin{Lema}\label{des entropia}
We have $ h_{\mu^{\star}}(F^{-1})  \geq  h_{\mu_\phi}(G)$.
\end{Lema}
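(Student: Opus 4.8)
The plan is to observe that $\pi$ is not only a topological semiconjugacy but also a \emph{measure-preserving factor map} from $(\Omega,F^{-1},\mu^{\star})$ onto $(\Lambda,G,\mu_{\phi})$, and then to apply the classical monotonicity of Kolmogorov--Sinai entropy under factors. First I would record the two ingredients that make this work, both already available at this point: the relation $\pi\circ F^{-1}=G\circ\pi$, whence $\pi\circ F^{-j}=G^{j}\circ\pi$ for every $j\ge 0$; and the identity $\mu^{\star}(\pi^{-1}(B))=\mu_{\phi}(B)$ for every Borel set $B\subseteq\Lambda$, which follows immediately from the definition of $\mu^{\star}$ on $\mathcal{A}_{0}=\pi^{-1}(\mathcal{A}_{\Lambda})$ together with the surjectivity of $\pi$ (so that $\pi(\pi^{-1}(B))=B$). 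Recall also that $\mu^{\star}$ has already been shown to be $F^{-1}$-invariant and to be defined on the full Borel $\sigma$-algebra of $\Omega$, so that $h_{\mu^{\star}}(F^{-1})$ is meaningful.

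Next I would run the partition estimate. Fix any finite Borel partition $\mathcal{P}$ of $\Lambda$ and set $\mathcal{Q}:=\pi^{-1}(\mathcal{P})=\{\pi^{-1}(P):P\in\mathcal{P}\}$, a finite Borel partition of $\Omega$ because $\pi$ is continuous and surjective. Using $\pi\circ F^{-j}=G^{j}\circ\pi$ one checks that $F^{j}(\pi^{-1}(P))=\pi^{-1}(G^{-j}(P))$ for every $P\in\mathcal{P}$ and every $j\ge 0$, and therefore
$$\bigvee_{j=0}^{n-1}F^{j}\mathcal{Q}\;=\;\pi^{-1}\!\left(\bigvee_{j=0}^{n-1}G^{-j}\mathcal{P}\right).$$
Since $\mu^{\star}(\pi^{-1}(\cdot))=\mu_{\phi}(\cdot)$ on Borel sets, the atoms of these two partitions have equal measures, so $H_{\mu^{\star}}\!\bigl(\bigvee_{j=0}^{n-1}F^{j}\mathcal{Q}\bigr)=H_{\mu_{\phi}}\!\bigl(\bigvee_{j=0}^{n-1}G^{-j}\mathcal{P}\bigr)$. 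Dividing by $n$ and letting $n\to\infty$ yields $h_{\mu^{\star}}(F^{-1},\mathcal{Q})=h_{\mu_{\phi}}(G,\mathcal{P})$, hence $h_{\mu^{\star}}(F^{-1})\ge h_{\mu^{\star}}(F^{-1},\mathcal{Q})=h_{\mu_{\phi}}(G,\mathcal{P})$. Taking the supremum over all finite Borel partitions $\mathcal{P}$ of $\Lambda$ gives $h_{\mu^{\star}}(F^{-1})\ge h_{\mu_{\phi}}(G)$, as claimed.

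I do not expect a genuine obstacle here: the argument is just the standard fact that a measure-preserving factor map never increases entropy, so that the extension $(\Omega,F^{-1},\mu^{\star})$ has entropy at least that of its factor $(\Lambda,G,\mu_{\phi})$, and one could equally well simply cite this. The only points deserving a word of care are purely bookkeeping, namely that $\mathcal{Q}=\pi^{-1}(\mathcal{P})$ is a legitimate measurable partition (immediate from continuity of $\pi$) and that the measure-theoretic data of $\mu^{\star}$ have already been set up correctly in the preceding lemmas.
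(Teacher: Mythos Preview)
Your proof is correct and takes a genuinely different route from the paper. The paper proves the inequality via the Brin--Katok local entropy formula: it uses uniform continuity of $\pi$ to show that every dynamical ball $\mathcal{B}^{n}_{F^{-1}}(Y,\delta)$ sits inside $\pi^{-1}\bigl(\mathcal{B}^{n}_{G}(\pi(Y),\epsilon)\bigr)$, and then compares the $\mu^{\star}$- and $\mu_{\phi}$-measures of these balls to conclude. Your argument instead stays at the level of Kolmogorov--Sinai entropy and finite partitions, recognizing that $\pi$ is a measure-theoretic factor map and simply running the textbook proof that factors cannot increase entropy. Your approach is more elementary in that it avoids invoking Brin--Katok (which in its pointwise form needs ergodicity of $\mu_{\phi}$, available here but an extra ingredient), and it makes transparent that the result is a special case of a general principle rather than something specific to this construction. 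The paper's approach, on the other hand, is closer in spirit to the topological-entropy comparisons already used in the surrounding text and keeps the whole section in the language of dynamical balls. Both are perfectly valid; yours could even be shortened to a one-line citation of the monotonicity of entropy under factors.
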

\begin{proof}
Using the well known Brin-Katok Theorem we can compute the entropy by the formula [see \cite{Walters}]:
$$  h_{\mu_{\phi}}(G) = \lim_{\epsilon \to 0 }\limsup_{n \to +\infty} \frac{1}{n} \log \frac{1}{\mu_{\phi}( {\mathcal{B}}^{n}_{G}(X, \epsilon) )} $$
for almost every $X$ on $\Lambda$ with respect to $\mu_{\phi}$, where $ {\mathcal{B}}^{n}_{G}(X, \epsilon) $ denotes the dynamical ball of $G$ centered on $X$ and with radius $\epsilon$ and lenght $n$.
We notice that, given $\epsilon>0$, by the continuity of $\pi$, (on a compact space) there exists $0\leq \delta \leq \epsilon$ such that
 $$\pi(\mathcal{B}(Z,\delta)) \subset\mathcal{B}(\pi(Z),\epsilon) \quad \mbox{for every} \quad Z \in \Omega .$$
Using this fact it is straightforward to check that
   $$ {\mathcal{B}}^{n}_{F^{-1}}(Y, \delta)    \subset\pi^{-1}( {\mathcal{B}}^{n}_{G}(X, \epsilon)) \quad \mbox{for every} \quad Y \in \pi^{-1}(X) . $$
By the definition of the measure $\mu^{\star}$ we have: 
         $$\mu^{\star}( {\mathcal{B}}^{n}_{F^{-1}}(Y, \delta)  )  \leq    \mu^{\star}(\pi^{-1}( {\mathcal{B}}^{n}_{G}(X, \epsilon))) = \mu_{\phi}( {\mathcal{B}}^{n}_{G}(X, \epsilon))  . $$

If we consider $M \subset \Lambda$ a full measure set with respect to $\mu_{\phi}$ then $\pi^{-1}(M) \subset \Omega$ satisfying $\mu^{\star}(\pi^{-1}(M)) =\mu_{\phi}(M)=1 $. Thus:
$$  h_{\mu_{\phi}}(G) \leq \lim_{\epsilon \to 0 }\limsup_{n \to +\infty} \frac{1}{n} \log \frac{1}{\mu_{\phi}( {\mathcal{B}}^{n}_{G}(X, \epsilon) )} = h_{\mu^{\star}}(F^{-1}). $$
\end{proof}

Consider $\varphi:= \phi \circ \pi$. Thus $\varphi$ is a real function in $\Omega$.
\begin{Prop}
Given the system $(F^{-1}, \varphi)$ the probability measure $ \mu^{\star}$  is an equilibrium state associated to it.
\end{Prop}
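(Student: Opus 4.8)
The plan is to deduce the statement from the variational principle applied to both $(F^{-1},\varphi)$ and $(G,\phi)$, squeezing $P_{top}(F^{-1},\varphi)$ between two equal quantities. Since the variational principle gives $h_{\mu^{\star}}(F^{-1}) + \int\varphi\,d\mu^{\star}\le P_{top}(F^{-1},\varphi)$ for free, it suffices to establish the two inequalities $h_{\mu^{\star}}(F^{-1}) + \int\varphi\,d\mu^{\star}\ge P_{top}(G,\phi)$ and $P_{top}(F^{-1},\varphi)\le P_{top}(G,\phi)$.

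First I would observe that $\pi_{*}\mu^{\star} = \mu_{\phi}$: by the very definition of $\mu^{\star}$ on $\mathcal{A}_{0}$ one has $\mu^{\star}(\pi^{-1}(B)) = \mu_{\phi}(\pi(\pi^{-1}(B))) = \mu_{\phi}(B)$ for every Borelian $B\subset\Lambda$, using that $\pi$ is onto. Hence, since $\varphi = \phi\circ\pi$ is continuous, $\int\varphi\,d\mu^{\star} = \int\phi\,d\pi_{*}\mu^{\star} = \int\phi\,d\mu_{\phi}$. Combining this with Lemma~\ref{des entropia} and with the fact that $\mu_{\phi}$ is \emph{the} equilibrium state of $(G,\phi)$ from Theorem~\ref{Teorema Principal} (so that $h_{\mu_{\phi}}(G) + \int\phi\,d\mu_{\phi} = P_{top}(G,\phi)$), we obtain
$$h_{\mu^{\star}}(F^{-1}) + \int\varphi\,d\mu^{\star}\;\ge\;h_{\mu_{\phi}}(G) + \int\phi\,d\mu_{\phi}\;=\;P_{top}(G,\phi).$$

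Next I would prove $P_{top}(F^{-1},\varphi)\le P_{top}(G,\phi)$. Given any $F^{-1}$-invariant probability $\eta$, set $m := \pi_{*}\eta$; since $\pi\circ F^{-1} = G\circ\pi$, the measure $m$ is $G$-invariant and $\int\varphi\,d\eta = \int\phi\,dm$ because $\varphi=\phi\circ\pi$. Because $\pi$ is a factor map all of whose fibres satisfy $h(F^{-1},\pi^{-1}(X)) = 0$ (Lemma~\ref{zero na fibra}), the relative variational principle — the measured counterpart of the argument that produced \eqref{igualdade das entropias}; see \cite{Bowen75,Walters} — yields $h_{\eta}(F^{-1})\le h_{m}(G)$. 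Therefore $h_{\eta}(F^{-1}) + \int\varphi\,d\eta = h_{m}(G) + \int\phi\,dm\le P_{top}(G,\phi)$, and taking the supremum over all $F^{-1}$-invariant $\eta$ gives the claimed bound.

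Putting these bounds together, $P_{top}(F^{-1},\varphi)\le P_{top}(G,\phi)\le h_{\mu^{\star}}(F^{-1}) + \int\varphi\,d\mu^{\star}\le P_{top}(F^{-1},\varphi)$, so every inequality is in fact an equality; in particular $\mu^{\star}$ realizes the supremum in the variational principle for $(F^{-1},\varphi)$ and is thus an equilibrium state. The one delicate point is the inequality $h_{\eta}(F^{-1})\le h_{\pi_{*}\eta}(G)$ — the reverse of the automatic factor inequality — which rests entirely on the contracting fibre dynamics of Lemma~\ref{zero na fibra}; everything else is bookkeeping with the semiconjugacy $\pi$.
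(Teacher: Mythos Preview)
Your argument is correct and follows essentially the same route as the paper: both use the Ledrappier--Walters relative variational principle together with Lemma~\ref{zero na fibra} to obtain $P_{top}(F^{-1},\varphi)\le P_{top}(G,\phi)$, and then Lemma~\ref{des entropia} with $\mu_{\phi}$ being an equilibrium state to close the squeeze. Two small remarks: in your display ``$h_{\eta}(F^{-1}) + \int\varphi\,d\eta = h_{m}(G) + \int\phi\,dm$'' the equality should be $\le$ (you only proved $h_{\eta}(F^{-1})\le h_{m}(G)$); and the precise reference for the measured inequality $h_{\eta}(F^{-1})\le h_{\pi_{*}\eta}(G)+\int h_{top}(F^{-1},\pi^{-1}(X))\,d(\pi_{*}\eta)$ is Ledrappier--Walters \cite{Ledra} rather than \cite{Bowen75,Walters}.
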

\begin{proof}
Let $\overline{\mu}$ be an $F^{-1}$-invariant probability measure on $\Omega$. Consider a probability measure on $\Lambda$, defined by $\eta :=\overline{\mu} \circ \pi ^{-1} $. We have:  
$$h_{\overline{\mu}} (F^{-1}) + \int \phi \circ \pi \, d\overline{\mu} \leq h_{\eta}(G) + \int \phi \, d{\eta} +\displaystyle\int_{\Lambda} h_{top}(F^{-1}, \pi^{-1}(X)) d\mu(X).$$
It follows from a result due Ledrapier and Walters \cite{Ledra}. For a proof of the statement see Theorem 4.1 of \cite{AP}.
According to Lema~\ref{zero na fibra} we have
$$\displaystyle\int_{\Lambda} h_{top}(F^{-1}, \pi^{-1}(X)) d\mu(X) = 0.$$
 By the semiconjugacy, $\eta$ is $G$-invariant, thus
  $$ h_{\eta}(G) + \int \phi \, d\eta  \leq P_{top} (G,\phi)  $$
Going back to the inequality we get:
$$h_{\overline{\mu}} (F^{-1}) + \int \phi \circ \pi \, d\overline{\mu} \leq  P_{top} (G,\phi)  $$
Since the last inequality holds for any $F^{-1}$-invariant measure:
      $$ P_{top} (F^{-1},\varphi)\leq  P_{top} (G,\phi). $$
On the other hand, $\pi$ is a semiconjugacy, which implies [see \cite{Bowen75}] 
        $$ P_{top}(G,\phi)\leq  P_{top}  (F^{-1},\varphi) .$$
Therefore the topological pressure of both systems coincide.
Notice that, by change of variables, we get
  $$\int \phi \, d\mu_{\phi}= \int \phi \circ \pi \, d\mu^{\star}= \int \varphi \, d\mu^{\star}. $$
Using the fact that  $h_{\mu_{\star}}(F^{-1}) \geq h_{\mu_{\phi}}(G)  $, proved in Proposition~\ref{des entropia}, and that the topological pressures coincides we have:
$$h_{\mu^{\star}} (F^{-1}) + \int \phi \circ \pi \, d\mu^{\star} \geq  h_{\mu_{\phi}}(G)  + \int \phi \, d\mu_{\phi} =  P_{top} (F^{-1},\varphi).$$
Thus $\mu^{\star}$ is an equilibrium state associated to $(F^{-1}, \varphi)$.
\end{proof}

\subsection{Uniqueness}

In the last section we constructed a probability measure provenient of the equilibrium state on the space $\Lambda$. We proved that this measure is actually an equilibrium state for the horseshoe associated to a potential which projects onto the potential on $\Lambda$.

Consider the equilibrium state $\mu^{\star}$ for $(F^{-1}, \varphi)$ obtained in the section~\ref{Existence}. Assume that there is another equilibrium state $\eta^{\star}$ which is different of  $\mu^{\star}$. 
We define a probability measure on $\Lambda$ by the push-forward of  $ \eta^{\star} $:
 $$\eta (A):=\eta^{\star}(\pi^{-1}(A)) \quad \mbox{for all} \ A \ \mbox{Borelian set.}  $$
We will prove that the measure $\eta$ differs of the unique equilibrium state on $\Lambda$, given by  Theorem~\ref{Teorema Principal}.
\begin{Lema}
The probability measure $\eta$ is different of the equilibrium state $\mu_{\phi}$.
\end{Lema}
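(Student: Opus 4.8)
The plan is to argue by contradiction: assume $\eta=\mu_\phi$ and derive that $\eta^{\star}$ must coincide with $\mu^{\star}$, contradicting the standing hypothesis $\eta^{\star}\neq\mu^{\star}$. Both $\mu^{\star}$ and $\eta^{\star}$ are $F^{-1}$-invariant probability measures on $\Omega$, and by their very construction $\pi_{\ast}\mu^{\star}=\mu_\phi$, while $\pi_{\ast}\eta^{\star}=\eta$ by the definition of $\eta$. Thus it is enough to prove the following rigidity statement: \emph{an $F^{-1}$-invariant probability measure on $\Omega$ is completely determined by its push-forward under $\pi$.} Granting this, $\pi_{\ast}\mu^{\star}=\mu_\phi=\eta=\pi_{\ast}\eta^{\star}$ forces $\mu^{\star}=\eta^{\star}$, the contradiction sought; hence $\eta\neq\mu_\phi$.

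To prove the rigidity statement I would take two $F^{-1}$-invariant probabilities $\overline\mu,\overline\mu'$ on $\Omega$ with $\pi_{\ast}\overline\mu=\pi_{\ast}\overline\mu'=m$ and disintegrate them along $\pi$ by Rokhlin's disintegration theorem, writing $\overline\mu=\int_\Lambda\overline\mu_X\,dm(X)$ and $\overline\mu'=\int_\Lambda\overline\mu'_X\,dm(X)$, where for $m$-a.e.\ $X$ the conditional probabilities $\overline\mu_X,\overline\mu'_X$ are carried by $\pi^{-1}(X)\cap\Omega$ (using $\overline\mu(\Omega)=\overline\mu'(\Omega)=1$, which also ensures that every negative iterate of $F$ is defined on these fibres). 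Fix a continuous $g\colon\Omega\to\mathbb R$ with modulus of continuity $\omega_g$. By $F^{-1}$-invariance of both measures, for every $n\in\mathbb N$,
\begin{equation*}
\int g\,d\overline\mu-\int g\,d\overline\mu'=\int_\Lambda\left(\int g\circ F^{-n}\,d\overline\mu_X-\int g\circ F^{-n}\,d\overline\mu'_X\right)dm(X).
\end{equation*}
Recall, as in the proof of Lemma~\ref{zero na fibra}, that $F^{-1}$ sends $\pi^{-1}(X)\cap\Omega$ into $\pi^{-1}(G(X))\cap\Omega$ contracting distances by a uniform factor $\kappa<1$ (one may take $\kappa=\beta_1^{-1}$); iterating, $F^{-n}\bigl(\pi^{-1}(X)\cap\Omega\bigr)$ is contained in a set of diameter at most $\kappa^{\,n}$. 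Consequently each inner integral differs from the value of $g$ at one common point of that set by at most $\omega_g(\kappa^{\,n})$, so the integrand above has absolute value at most $2\,\omega_g(\kappa^{\,n})$ for $m$-a.e.\ $X$. Letting $n\to\infty$ gives $\int g\,d\overline\mu=\int g\,d\overline\mu'$ for every continuous $g$, hence $\overline\mu=\overline\mu'$.

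Applying the rigidity statement with $\overline\mu=\mu^{\star}$, $\overline\mu'=\eta^{\star}$ and $m=\mu_\phi$ yields $\mu^{\star}=\eta^{\star}$, which is the contradiction, and the lemma follows. The only substantive ingredient is the rigidity statement, whose heart is the uniform contraction of $F^{-1}$ along the fibres of $\pi$ — exactly the mechanism already isolated in Lemma~\ref{zero na fibra}; the remaining work (disintegration and the small-diameter approximation) is routine, the single technical point being that the conditional measures live on $\pi^{-1}(X)\cap\Omega$, where $F$ is invertible.
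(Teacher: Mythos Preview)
Your proof is correct, but it takes a different route from the paper's. Both arguments reduce to the same rigidity statement --- an $F^{-1}$-invariant probability on $\Omega$ is determined by its push-forward under $\pi$ --- yet they establish it by different means. The paper argues directly from the construction of $\mu^{\star}$: the algebra $\mathcal{A}=\bigcup_n F^{-n}\bigl(\pi^{-1}(\mathcal{A}_\Lambda)\bigr)$ generates the Borel $\sigma$-algebra of $\Omega$, and by $F^{-1}$-invariance any invariant measure $\overline\mu$ satisfies $\overline\mu\bigl(F^{-n}(\pi^{-1}B)\bigr)=\overline\mu(\pi^{-1}B)=(\pi_\ast\overline\mu)(B)$; so if $\pi_\ast\mu^{\star}=\pi_\ast\eta^{\star}$ the two measures agree on $\mathcal{A}$, hence on all Borel sets. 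This is entirely elementary and uses nothing beyond invariance and the generating property already proved. Your argument instead disintegrates along $\pi$ and uses the uniform fibre contraction to show the conditional measures play no role in integrals of continuous functions; this is more conceptual and would apply to any skew-product with uniformly contracting fibres, at the cost of invoking Rokhlin's disintegration theorem. Both approaches are fine; the paper's is shorter here because the relevant generating algebra was already set up in Section~\ref{Existence}.
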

\begin{proof}
We suppose that $\eta^{\star} \neq \mu^{\star} $ which implies that there exists an $A \in \mathcal{A}$ satisfying $\eta^{\star}(A) \neq \mu^{\star}(A)$ .
The $\sigma$-algebra $\mathcal{A}$ , wich coincides with the Borelians on $\Lambda$, was obtained as $\mathcal{A}= \mathcal{A}_0 \cup F^{-1}\mathcal{A}_0 \cup \cdots \cup  F^{-n}\mathcal{A}_0 \cup \cdots $ 
Thus there exists $A_0 \in \mathcal{A}_0$ and $n_0 \in \mathbb{N}$ such that  $A= F^{-n_0}(A_0)$.

Remember also that  $\mathcal{A}_0=\pi^{-1}\left(  \mathcal{A}_\Lambda \right)$ and therefore there exists $B \in \mathcal{A}_\Lambda$ such that $A_0 = \pi^{-1}(B)$.
Using that $\mu^{\star}$ is invariant by $F^{-1}$ we have:
$$ \eta(B)= \eta^{\star}(\pi^{-1}(B)) = \eta^{\star}(A_0) =  \eta^{\star}(F^{-n_0}(A_0)) =  \eta^{\star}(A). $$
On the other hand, we have:
$$ \mu_{\phi}(B)= \mu_{\phi}(\pi(A_0)) = \mu^{\star}(A_0)= \mu^{\star}(F^{-n_0}A_0) =  \mu^{\star}(A).  $$

Since $\eta^{\star}(A) \neq \mu^{\star}(A) $  we have that $ \eta(B) \neq  \mu_{\phi}(B)$  and the Lemma is proved.
\end{proof}

By supposing the existence of another equilibrium state for the Horseshoe we defined a probability measure on $\Lambda$, denoted by $\eta$, that does not coincide with the unique equilibrium state on $\Lambda$. We are going to prove that $\eta$ is an equilibrium state on $\Lambda$ which contradicts the uniqueness of equilibrium states to the system $(G,\phi)$ and therefore we have proved that there is only one equilibrium state to the Horseshoe associated to the potential $\varphi$.

\begin{Lema}
The probability measure $\eta$ is an equilibrium state associated to the system $(G, \phi).$
\end{Lema}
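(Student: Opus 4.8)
The plan is to verify the two requirements in Definition~\ref{est eq} for $\eta$: that $\eta$ is $G$-invariant, and that $h_\eta(G)+\int\phi\,d\eta$ attains the supremum, i.e.\ equals $P_{top}(G,\phi)$.

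\textbf{$G$-invariance.} First I would read the semiconjugacy $\pi\circ F^{-1}=G\circ\pi$ at the level of preimages: for every Borel set $A\subset\Lambda$,
$$\pi^{-1}\big(G^{-1}(A)\big)=(G\circ\pi)^{-1}(A)=(\pi\circ F^{-1})^{-1}(A)=(F^{-1})^{-1}\big(\pi^{-1}(A)\big).$$
Since $\eta^\star$ is $F^{-1}$-invariant, this gives
$$\eta\big(G^{-1}(A)\big)=\eta^\star\Big((F^{-1})^{-1}\big(\pi^{-1}(A)\big)\Big)=\eta^\star\big(\pi^{-1}(A)\big)=\eta(A),$$
so $\eta$ is $G$-invariant. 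By the change of variables formula I also record $\int\phi\,d\eta=\int\phi\circ\pi\,d\eta^\star=\int\varphi\,d\eta^\star$.

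\textbf{Pressure.} Next I would invoke the Ledrapier--Walters inequality exactly as it was used for $\mu^\star$ in Section~\ref{Existence} (see \cite{Ledra} and Theorem~4.1 of \cite{AP}), now applied to the $F^{-1}$-invariant measure $\eta^\star$ with projection $\eta$:
$$h_{\eta^\star}(F^{-1})+\int\varphi\,d\eta^\star\le h_\eta(G)+\int\phi\,d\eta+\int_\Lambda h\big(F^{-1},\pi^{-1}(X)\big)\,d\eta(X).$$
By Lemma~\ref{zero na fibra} the last integral vanishes. Since $\eta^\star$ is, by assumption, an equilibrium state for $(F^{-1},\varphi)$ and the topological pressures of the two systems coincide (established in Section~\ref{Existence}), the left-hand side equals $P_{top}(F^{-1},\varphi)=P_{top}(G,\phi)$; hence $P_{top}(G,\phi)\le h_\eta(G)+\int\phi\,d\eta$. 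The reverse inequality is immediate from the variational principle, as $\eta$ is $G$-invariant. Therefore $h_\eta(G)+\int\phi\,d\eta=P_{top}(G,\phi)$, i.e.\ $\eta$ is an equilibrium state for $(G,\phi)$.

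This closes the uniqueness argument for the horseshoe: $\eta$ is then an equilibrium state for $(G,\phi)$ which, by the preceding lemma, differs from $\mu_\phi$, contradicting the uniqueness part of Theorem~\ref{Teorema Principal}; hence no such $\eta^\star$ exists and $\mu^\star$ is the unique equilibrium state for $(F^{-1},\varphi)$. The only point that requires care is the direction of the abstract entropy comparison: here one needs the Ledrapier--Walters bound $h_{\eta^\star}(F^{-1})\le h_\eta(G)$ (using that the fibre entropies are zero), the opposite inequality to the Brin--Katok computation carried out in Lemma~\ref{des entropia}; all remaining ingredients have already been assembled in Section~\ref{Existence}.
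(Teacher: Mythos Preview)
Your proof is correct and follows essentially the same route as the paper: equality of the integrals by change of variables, equality of the topological pressures from Section~\ref{Existence}, and an entropy comparison to conclude $h_\eta(G)+\int\phi\,d\eta\ge P_{top}(G,\phi)$. In fact your handling of the entropy step is cleaner than the paper's: the paper cites Lemma~\ref{des entropia} for $h_\eta(G)\ge h_{\eta^\star}(F^{-1})$, but that lemma points the other way; the correct justification is precisely the Ledrappier--Walters inequality together with Lemma~\ref{zero na fibra}, exactly as you invoke it (and as the paper itself used it in Section~\ref{Existence}). Your explicit verification of $G$-invariance is also a welcome addition.
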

\begin{proof}
We have to prove that:
$$  h_{\eta}(G)  + \int \phi \, d\eta \geq  P_{top}(G, \phi). $$
In the section~\ref{Existence} we proved that $P_{top}(G, \phi)= P_{top}(F^{-1},\varphi)$. Besides that we proved that  $\eta^{\star}$ is an equilibrium state to $(F^{-1}, \varphi)$. Thus we have:
$$  P_{top}(G, \phi) =  P_{top}(F^{-1},\varphi) =   h_{\eta^{\star}}(F^{-1})  + \int \varphi \, d\eta^{\star} . $$
Then it is enough to prove:
$$  h_{\eta}(G)  + \int \phi \, d\eta \geq  h_{\eta^{\star}}(F^{-1})  + \int \varphi \, d\eta^{\star}. $$
The integrals in the inequality coincide (by a change of variables).  By the Lema~\ref{des entropia} we have $ h_{\eta}(G)  \geq  h_{\eta^{\star}}(F^{-1})$ which finishes the proof.
\end{proof}

\subsection*{Acknowledgement} This work was carried out at Universidade Federal Fluminense, Universit\'e de Bretagne Occidentale, Universidade Federal de Alagoas and Universidade do Porto. The authors are very thankful to Renaud Leplaideur, Krerley Oliveira, Marcelo Viana and Ralph Teixeira for very helpful conversations and encouragement. This work was partially supported by Faperj, Cnpq and the projects DynEurBraz and BREUDS.

\end{document}